\nonstopmode \numberwithin{equation}{section}
\newtheorem{theorem}{Theorem}[section]
\newtheorem{definition}{Definition}
\newtheorem{remark}{Remark}[section]
\newtheorem{conj}{Conjecture}
\newtheorem{lemma}{Lemma}[section]
\newtheorem{corollary}{Corollary}[section]
\newtheorem{proposition}{Proposition}[section]
\begin{document}
\title{ON GENERALIZED CES\`ARO STABLE FUNCTIONS}
\author{
Priyanka Sangal
}
\address{Department of Mathematics, IIT Roorkee}
\email{sangal.priyanka@gmail.com, priyadma@iitr.ac.in}

\author{
A. Swaminathan
$^{\ast}$}

{\thanks{$^{\ast}$Corresponding author}}
\address{
Department of  Mathematics  \\
Indian Institute of Technology, Roorkee-247 667,
Uttarkhand,  India
}
\email{swamifma@iitr.ac.in, mathswami@gmail.com}

\bigskip

\maketitle

\begin{abstract}
The notion of Ces\`aro stable function is generalized by introducing Ces\`aro mean of type
$(b-1;c)$ which give rise to a new concept of generalized Ces\`aro stable function.
As an application of generalized Ces\`aro stable functions we also prove for a convex
function of order $\lambda\in[1/2,1)$, its Ces\`aro mean of type $(b-1;c)$ is
close-to-convex of order $\lambda$. Further two conjectures are also posed in the
direction of generalized Ces\`aro stable function. Some particular cases of these conjectures
are also discussed.
\end{abstract}

2010 Mathematics Subject Classification: {Primary 42A05; Secondary 40G05, 30C45}

\pagestyle{myheadings}\markboth{Priyanka Sangal and A. Swaminathan}
{On Generalized Ces\`aro Stable Functions}

\keywords{Keywords: Trigonometric sums, Ces\`aro means,
Starlike functions, Close-to-convex functions.}

\section{preliminaries}

Let $b+1>c>0$ and $0<\mu<1$. Define the sequence $\{c_k\}$ as
\begin{align}\label{eqn:def-coeff}
c_{2k}=c_{2k+1}=d_k= \frac{B_{n-k}}{B_n} \frac{(\mu)_k}{k!}, \quad k=0,1,2,\ldots
\end{align}
where $B_0=1$ and $B_{k}=\frac{(b)_k}{(c)_k}\frac{1+b-c}{b}$ for $k\geq 1$.

This sequence was used in \cite{sangal-swaminathan-positivity-alpha-beta}
to obtain the positivity of the
trigonometric cosine sums.
\begin{theorem}\cite{sangal-swaminathan-positivity-alpha-beta}
\label{thm:cosine-positivity-sigma-bc}
Let the coefficient $\{c_k\}$ be given as in \eqref{eqn:def-coeff}.
Then for $b\geq c>0$ and $n\in\mathbb{N}$
\begin{align*}
\sum_{k=0}^n c_k \cos k\theta>0 \quad \mbox{for} \quad \mu\leq \mu_0' \mbox{ and } 0<\theta<\pi,
\end{align*}
where $\mu_0'$ is the solution of
\begin{align*}
\int_{0}^{3\pi/2} \frac{\cos t}{t^{1-\mu}}\left(1-\frac{2t}{3\pi}\right)^{b-c} dt=0.
\end{align*}
\end{theorem}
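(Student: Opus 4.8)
The plan is to prove the inequality by the standard two-regime method for positive trigonometric sums: a Vietoris-type argument disposes of the range $\mu\le1/2$, while for the full range $\mu\le\mu_0'$ one passes to the limit near the origin and identifies the surviving term with the integral defining $\mu_0'$.

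\smallskip
\emph{Step 1 (coefficients; the range $\mu\le1/2$).} First I would record that $\{c_k\}$ is positive and non-increasing: $d_0=1$, $d_k>0$, and $d_k=\frac{B_{n-k}}{B_n}\frac{(\mu)_k}{k!}$ is a product of two non-increasing sequences in $k$, since $B_{j+1}/B_j=(b+j)/(c+j)\ge1$ for $b\ge c$ and $(\mu+k)/(k+1)<1$ for $\mu<1$. A short computation gives
$$\frac{d_k}{d_{k-1}}=\frac{c+n-k}{b+n-k}\cdot\frac{\mu+k-1}{k}\le\frac{\mu+k-1}{k},$$
so the Vietoris hypothesis $2k\,c_{2k}\le(2k-1)c_{2k-1}$ holds as soon as $\mu\le1/2$, and the classical Vietoris inequality then yields the conclusion in that range. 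Since $\mu_0'>1/2$ for every $b\ge c>0$ (e.g.\ at $b=c$, $\mu=1/2$ the defining integral equals $\sqrt{2\pi}\,C(\sqrt3)>0$, with $C$ the Fresnel cosine integral), this already settles a nontrivial portion of the theorem and fixes the base case for an induction on $n$ in what follows.

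\smallskip
\emph{Step 2 (doubling reduction and summation by parts).} For the remaining range I would use $c_{2j}=c_{2j+1}=d_j$ together with $\cos2j\theta+\cos(2j+1)\theta=2\cos(\theta/2)\cos((2j+\tfrac12)\theta)$, which (for odd $n$; even $n$ is analogous, with the leftover term absorbed in the tail) reduces the positivity of $\sum_{k=0}^nc_k\cos k\theta$ on $(0,\pi)$ to that of $\Sigma(\theta):=\sum_{j=0}^{m}d_j\cos((2j+\tfrac12)\theta)$, as $2\cos(\theta/2)>0$ there. Abel summation recasts $\Sigma(\theta)$ in terms of $\Delta d_j=d_j-d_{j+1}\ge0$ and the partial sums $D_j(\theta)=\sum_{l\le j}\cos((2l+\tfrac12)\theta)=\sin((j+1)\theta)\cos((j+\tfrac12)\theta)/\sin\theta$; a second summation by parts brings in $\Delta^2 d_j$ and the Fej\'er-type averages of the $D_j$. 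When $b-c\ge1$ the sequence $\{d_j\}$ is close enough to convex that these pieces are bounded by sums of nonnegative terms and the argument essentially closes; when $0<b-c<1$ the relevant differences change sign for $k$ comparable to $n$, and the index range must be split accordingly, which is where the finer analysis of Step 3 enters.

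\smallskip
\emph{Step 3 (rescaling near the origin; the critical integral).} I would then split $(0,\pi)$ into $(0,\delta/n]$ and $[\delta/n,\pi)$. On the outer interval, $|D_j(\theta)|\le1/\sin\theta$ and the regular variation $d_j\asymp(1-j/n)^{b-c}j^{\mu-1}$, fed through the Abel identity, give a positive dominant term exactly as in the case $\mu\le1/2$. On the inner interval I put $\theta=\tau/n$ and pass to the limit: using $B_{n-k}/B_n\to(1-k/n)^{b-c}$ and $(\mu)_k/k!\sim k^{\mu-1}/\Gamma(\mu)$, the suitably normalized sum converges, uniformly for $\tau$ in compact sets, to a positive multiple of an integral of the form
$$\int_0^{X}\frac{\cos t}{t^{1-\mu}}\Big(1-\frac{2t}{3\pi}\Big)^{b-c}dt,\qquad 0<X\le\tfrac{3\pi}{2}.$$
One then checks that, for $b\ge c$, the least value over admissible $X$ is attained at $X=3\pi/2$: the integral is increasing on $(0,\pi/2)$ because $\cos t>0$ there, its first subsequent local minimum occurs at $3\pi/2$, and over the later full periods the contributions of $\cos t$, damped by both $t^{\mu-1}$ and the factor $(1-\frac{2t}{3\pi})^{b-c}$, only raise the value. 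Hence the limiting quantity is nonnegative precisely when $\int_0^{3\pi/2}\frac{\cos t}{t^{1-\mu}}(1-\frac{2t}{3\pi})^{b-c}\,dt\ge0$, i.e.\ when $\mu\le\mu_0'$; this matching is what pins down $\mu_0'$.

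\smallskip
\emph{Main obstacle.} The delicate point is not the heuristic limit but making Step 3 effective for every finite $n$: one needs an error estimate for the passage from the sum to the integral that is uniform in $n$ and does not consume the strict positivity gained from $\mu<\mu_0'$, plus a dedicated treatment of the equality case $\mu=\mu_0'$. I expect this to be organized as an induction on $n$ — the inductive step comparing the sum at level $n$ with that at level $n-2$, combined with monotonicity of the kernels $D_j$ — in which the $n$-dependent weight $B_{n-k}/B_n$ makes both the difference $\Delta(B_{n-k}/B_n)$ and the index splitting awkward; getting these bounds sharp enough to reach exactly $\mu_0'$, rather than some smaller threshold, is where essentially all the difficulty lies.
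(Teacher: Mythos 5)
First, a point of reference: this theorem is not proved in the present paper at all --- it is imported verbatim from \cite{sangal-swaminathan-positivity-alpha-beta}, so there is no in-paper proof to compare against. Judged on its own terms, your proposal correctly identifies the standard architecture for sharp Vietoris-type results (Vietoris' criterion for $\mu\le 1/2$; the pairing $c_{2j}=c_{2j+1}$ with $2\cos(\theta/2)$ factored out; Abel summation; the rescaling $\theta=\tau/n$ that produces the integral $\int_0^{3\pi/2}\cos t\,t^{\mu-1}(1-\tfrac{2t}{3\pi})^{b-c}dt$ and hence the constant $\mu_0'$). But it is a plan, not a proof, and the gap is exactly where you yourself locate it: the limit computation in Step~3 only shows that nonnegativity of that integral is \emph{necessary} for the positivity to persist as $n\to\infty$; it proves nothing for any finite $n$. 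The entire content of the theorem is the converse direction --- uniform-in-$n$, uniform-in-$\theta$ lower bounds valid all the way up to the sharp threshold $\mu_0'$ --- and none of the required estimates (the splitting of $(0,\pi)$, the treatment of the inner range for finite $n$, the tail bounds after two summations by parts, the equality case $\mu=\mu_0'$) is carried out. As written, the argument establishes the theorem only for $\mu\le 1/2$, where it is a routine consequence of Vietoris' theorem and says nothing about $b,c$.

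There is also a concrete error in Step~1. You compute $B_{j+1}/B_j=(b+j)/(c+j)$, but this is valid only for $j\ge 1$; because of the normalization $B_k=\frac{(b)_k}{(c)_k}\frac{1+b-c}{b}$ with $B_0=1$, one has $B_1/B_0=\frac{1+b-c}{c}$, which is $<1$ whenever $b<2c-1$ (possible under the stated hypothesis $b\ge c>0$ once $c>1$). In that regime the ratio $d_n/d_{n-1}=\frac{c}{1+b-c}\cdot\frac{\mu+n-1}{n}$ can exceed $\frac{2n-1}{2n}$, so both the claimed monotonicity of $\{d_k\}$ and the Vietoris hypothesis $2k\,c_{2k}\le(2k-1)c_{2k-1}$ fail at the final index. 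This is not cosmetic: it is precisely why the condition $b\ge\max\{c,2c-1\}$ surfaces elsewhere in the paper (Theorem 2.1), and any complete proof must either handle the last term separately or restrict the parameters. So even the ``easy'' range $\mu\le 1/2$ is not actually closed by your Step~1 as stated.
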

The positivity of sine sums analogous to Theorem \ref{thm:cosine-positivity-sigma-bc}
is also given in \cite{sangal-swaminathan-positivity-alpha-beta}.
\begin{theorem}\cite{sangal-swaminathan-positivity-alpha-beta}
\label{thm:sine-positivity-sigma-bc}
Let the coefficient $\{c_k\}$ be given as in \eqref{eqn:def-coeff}.
Then for $b\geq c>0$, $n\in\mathbb{N}$ and $0<\theta<\pi$ the following inequalities hold.
\begin{align*}
\sum_{k=1}^{2n+1} c_k \sin k\theta &>0 \quad \mbox{for} \quad \mu\leq \mu_0',\\
\sum_{k=1}^{2n} c_k \sin k\theta &>0 \quad \mbox{for} \quad \mu\leq \left(\frac{1+b}{c}\right)-\frac{1}{2}.
\end{align*}
\end{theorem}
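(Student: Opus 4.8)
The plan is to mirror the proof of Theorem~\ref{thm:cosine-positivity-sigma-bc}, using the one structural feature special to $\{c_k\}$: it is constant on consecutive pairs, $c_{2k}=c_{2k+1}=d_k$. Writing $z=e^{i\theta}$ and $P(\phi):=\sum_{j=0}^{n}d_j e^{ij\phi}$, this pairing gives
\begin{align*}
\sum_{k=0}^{2n+1}c_k z^k=(1+z)\sum_{j=0}^{n}d_j z^{2j}=2\cos(\theta/2)\,e^{i\theta/2}\,P(2\theta),
\end{align*}
so that $\sum_{k=0}^{2n+1}c_k\cos k\theta=2\cos(\theta/2)\sum_{j=0}^n d_j\cos\big((2j+\tfrac12)\theta\big)$ and $\sum_{k=1}^{2n+1}c_k\sin k\theta=2\cos(\theta/2)\sum_{j=0}^n d_j\sin\big((2j+\tfrac12)\theta\big)$; in other words the two sums are $2\cos(\theta/2)$ times the real and imaginary parts of $e^{i\theta/2}P(2\theta)$. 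On $(0,\pi)$ the prefactor is positive, so Theorem~\ref{thm:cosine-positivity-sigma-bc} is exactly the statement that the real part of $e^{i\theta/2}P(2\theta)$ is positive there for $\mu\le\mu_0'$, while the first asserted inequality is the companion statement that its imaginary part is positive for the same range of $\mu$. This conjugacy is precisely why the odd-length sine sum inherits the threshold $\mu_0'$ of the cosine sum, whereas the length-$2n$ case --- which has no such clean factorisation --- will need a different treatment.

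For the imaginary-part positivity I would run the same summation by parts as in the cosine case. From $2\sin(\theta/2)\sin k\theta=\cos((k-\tfrac12)\theta)-\cos((k+\tfrac12)\theta)$ and Abel summation, the vanishing of the even-index differences $c_{2j+1}-c_{2j}=0$ collapses everything to
\begin{align*}
\sin(\theta/2)\sum_{k=1}^{2n+1}c_k\sin k\theta=\sum_{j=0}^{n-1}(d_j-d_{j+1})\sin((j+1)\theta)\sin((j+\tfrac12)\theta)+d_n\sin((n+1)\theta)\sin((n+\tfrac12)\theta),
\end{align*}
and for the length-$2n$ sum the identical computation changes only the final factor $\sin((n+1)\theta)$ into $\sin(n\theta)$. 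One then inserts the monotonicity-type properties of $\{d_j\}$ supplied by $b\ge c$ and $0<\mu<1$ --- the relevant ratio is $d_{j+1}/d_j=(B_{n-j-1}/B_{n-j})(\mu+j)/(j+1)$, the top index being handled via the explicit value of $B_1$ --- and then estimates the resulting kernel. The \emph{main obstacle} is the sub-interval of $(0,\pi)$ on which $2\theta$ has already exceeded $\pi$ and the Dirichlet-type kernels $\sin((j+\tfrac12)\theta)/\sin(\theta/2)$ change sign: there no termwise bound survives, and one must appeal, uniformly in $n$, to the asymptotic comparison that drives Theorem~\ref{thm:cosine-positivity-sigma-bc}, namely the sign of $\int_0^{3\pi/2}\cos t\,t^{\mu-1}(1-2t/3\pi)^{b-c}\,dt$. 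I would organise the sine computation so that it reduces to essentially that same comparison, which is what makes $\mu_0'$ reappear.

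Finally, the length-$2n$ inequality genuinely requires a stronger condition, because
\begin{align*}
\sum_{k=1}^{2n}c_k\sin k\theta=\sum_{k=1}^{2n+1}c_k\sin k\theta-d_n\sin((2n+1)\theta),
\end{align*}
and the subtracted boundary term is not of one sign on $(0,\pi)$ --- equivalently, replacing $\sin((n+1)\theta)$ by $\sin(n\theta)$ in the last summand of the Abel identity above costs a term that has to be absorbed. This is the classical Vietoris effect, in which an even-length sine sum needs a more restrictive hypothesis than the cosine sum or the odd-length sine sum; tracking exactly when that boundary contribution is dominated --- a Vietoris-type monotonicity on the top coefficients --- is what produces the explicit cutoff $\mu\le(1+b)/c-\tfrac12$, which is always positive (since $c<b+1$) and becomes the binding restriction precisely when $c>\tfrac23(1+b)$. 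Accordingly I would treat the two cases in parallel: the $2n+1$ inequality by the conjugate form of the proof of Theorem~\ref{thm:cosine-positivity-sigma-bc}, and the $2n$ inequality by controlling the single extra boundary term under the sharper bound on $\mu$.
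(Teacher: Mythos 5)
This theorem is not actually proved in the present paper: it is imported verbatim from \cite{sangal-swaminathan-positivity-alpha-beta}, so there is no internal proof to measure you against. Judged on its own terms, your proposal gets the structural reductions right but leaves out the analysis that constitutes the theorem. The factorisation $\sum_{k=0}^{2n+1}c_kz^k=(1+z)\sum_j d_jz^{2j}$ and your Abel identity (with the even-index differences vanishing) are both correct, and the paper itself records the consequence you are circling around, in the proof of Theorem \ref{thm:conj1-rho=1/2}: since $\sin\tfrac{\phi}{2}\sum_{k=0}^{2n+1}c_k\cos k\phi=\cos\tfrac{\phi}{2}\sum_{k=1}^{2n+1}c_k\sin k(\pi-\phi)$, positivity of the odd cosine sums on all of $(0,\pi)$ is \emph{equivalent} to positivity of the odd sine sums there. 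Had you pushed your ``conjugacy'' remark through the reflection $\theta\mapsto\pi-\theta$ (rather than comparing $\mathrm{Re}$ and $\mathrm{Im}$ at the same point, which transfers nothing), the first inequality would follow in one line from Theorem \ref{thm:cosine-positivity-sigma-bc}. Instead you propose to re-run the summation by parts and then ``appeal, uniformly in $n$, to the asymptotic comparison that drives Theorem \ref{thm:cosine-positivity-sigma-bc}''; but that comparison --- the uniform-in-$n$ estimates on the subinterval where the kernels $\sin((j+\tfrac12)\theta)$ change sign, and the reduction to the sign of the integral defining $\mu_0'$ --- is the entire content of the result, and it is precisely the part you do not supply.

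The even-length case has a further structural problem. You write $\sum_{k=1}^{2n}c_k\sin k\theta$ as the odd sum minus the boundary term $d_n\sin((2n+1)\theta)$ and propose to ``absorb'' the latter. Positivity of the odd sum is not enough for that: you would need a quantitative lower bound on it, and the natural outcome of such an argument is a hypothesis of the form $\mu\le\min(\mu_0',\cdot)$, whereas the theorem asserts the even-sum inequality under the single condition $\mu\le(1+b)/c-\tfrac12$ with no reference to $\mu_0'$. So either the intended proof of the second inequality is genuinely different from that of the first (e.g.\ a Vietoris-type coefficient condition $2kc_{2k}\le(2k-1)c_{2k-1}$ applied directly to the even sum, which is where the special value $B_1=(1+b-c)/c$ enters), or your decomposition must be supplemented by estimates you have not indicated; in neither case do you derive the explicit threshold $(1+b)/c-\tfrac12$, you only assert that tracking the boundary term ``produces'' it. As it stands the proposal is a sensible roadmap whose correct parts are the elementary identities, while every analytically substantive step is deferred.
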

Note that for $b=1$ and $c=1$, $c_k$ given in \eqref{eqn:def-coeff} reduces to
$\gamma_k$ given by Vietoris \cite{vietoris-1958}.
\begin{align*}
\gamma_0=\gamma_1=1\quad \gamma_{2k}=\gamma_{2k+1}=\frac{(1/2)_k}{k!}, k\geq 1.
\end{align*}
Clearly Theorem \ref{thm:cosine-positivity-sigma-bc} and Theorem \ref{thm:sine-positivity-sigma-bc}
are further development of the following theorem given by Vietoris \cite{vietoris-1958},
by choosing $a_k=\gamma_k$.
\begin{theorem}\cite{vietoris-1958}\label{thm:vietoris}
Let $\{a_k\}_{k=0}^{\infty}$ be a non-increasing sequence of non-negative real numbers
such that $a_0>0$ and satisfying
\begin{align*}
2k a_{2k}\leq (2k-1)a_{2k-1},\quad  k\geq1,
\end{align*}
then for all positive integers $n$ and $\theta\in(0,\pi)$, we have
\begin{align*}
\sum_{k=1}^n a_k\sin{k\theta}>0 \hbox{  and  }  \sum_{k=0}^n a_k\cos{k\theta}>0.
\end{align*}
\end{theorem}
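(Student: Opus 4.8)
The plan is to use the fact that both sums are linear in the coefficient sequence $\{a_k\}$ and to reduce the whole statement to a single extremal sequence, namely Vietoris' sequence $\{\gamma_k\}$ itself.

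First I would note that the $n$-th partial sums depend only on $a_0,\dots,a_n$, so after the harmless normalisation $a_0=1$ (if $a_0=0$ the hypotheses force $a_k\equiv0$) the admissible coefficient vectors form the region
\[
P_n=\bigl\{(a_1,\dots,a_n):\ 0\le a_1\le 1,\ \ 0\le a_{j+1}\le c_j a_j\ \text{ for }1\le j\le n-1\bigr\},
\]
where $c_{2k-1}=\tfrac{2k-1}{2k}$ and $c_{2k}=1$. Indeed the even-index steps ($c_{2k}=1$) are the monotonicity conditions $a_{2k}\ge a_{2k+1}$, the odd-index steps are the Vietoris conditions $2ka_{2k}\le(2k-1)a_{2k-1}$, and the remaining monotonicity conditions $a_{2k-1}\ge a_{2k}$ are automatic since $c_{2k-1}<1$. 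By an easy induction on $n$---fibering $P_n$ over $a_1\in[0,1]$, whose fibre is $a_1$ times a region of the same staircase type---one sees that $P_n$ is a simplex whose vertices are the truncations
\[
E^{(m)}=(\gamma_1,\dots,\gamma_m,0,\dots,0),\qquad m=0,1,\dots,n,
\]
because $\gamma_1=1$ and $\gamma_k=c_1c_2\cdots c_{k-1}$ for $k\ge1$. Hence every admissible vector has a representation $a=\sum_{m=0}^{n}\lambda_m E^{(m)}$ with $\lambda_m\ge0$ and $\sum_m\lambda_m=1$.

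Carrying this decomposition into the two sums and writing $\sigma_m(\theta)=\sum_{k=1}^{m}\gamma_k\sin k\theta$ and $\tau_m(\theta)=\sum_{k=0}^{m}\gamma_k\cos k\theta$ (so $\tau_0\equiv1$), a reordering of summations gives
\begin{align*}
\sum_{k=1}^{n}a_k\sin k\theta&=\sum_{m=1}^{n}\lambda_m\,\sigma_m(\theta),\\
\sum_{k=0}^{n}a_k\cos k\theta&=\sum_{m=0}^{n}\lambda_m\,\tau_m(\theta).
\end{align*}
So it suffices to prove $\sigma_m(\theta)>0$ and $\tau_m(\theta)>0$ for all $m\ge1$ and $\theta\in(0,\pi)$: the cosine inequality then follows because $\sum_m\lambda_m=1$ forces some $\lambda_m>0$ and also $\tau_0>0$, while the sine inequality follows because, $a_0=1$ being given, the representation is supported on some index $m\ge1$ unless $a_1=\dots=a_n=0$---the degenerate case in which the sine sum vanishes identically and which is implicitly excluded from the statement.

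What remains is the genuine content of the theorem: the positivity of the partial Vietoris sums themselves. For this I would induct on $m$, using $\gamma_{2k}=\gamma_{2k+1}$, the recursion $\gamma_{2k+2}=\tfrac{2k+1}{2k+2}\gamma_{2k+1}$, and sum-to-product identities; grouping consecutive equal coefficients, for instance, yields $\sigma_{2m+1}(\theta)=2\cos\tfrac{\theta}{2}\sum_{k=0}^{m}\gamma_{2k}\sin\bigl((2k+\tfrac12)\theta\bigr)$, which peels off the positive factor $\cos(\theta/2)$, and the cosine sums are treated in parallel. The hard part will be controlling the behaviour near $\theta=\pi$: there $\sigma_m(\pi)=0$ for all $m$ and $\tau_m(\pi)=0$ for every odd $m$, so the inequalities are tight at that endpoint and one must show the sums tend to $0$ from above---precisely the delicate point of Vietoris' original argument. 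Alternatively this last step can be carried out by the method of Askey and Steinig, or by analysing the partial sums of the generating function $\sum_{k\ge0}\gamma_k z^k=\sqrt{(1+z)/(1-z)}$, whose boundary value $\sqrt{\cot(\theta/2)}\,e^{i\pi/4}$ on $(0,\pi)$ has positive real and imaginary parts.
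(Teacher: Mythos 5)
The paper does not prove this theorem at all --- it is quoted verbatim from Vietoris as a known result --- so there is no internal proof to compare against; I can only judge your argument on its own terms. Your reduction step is correct and is in fact the classical observation underlying the theorem (it is Abel summation in geometric dress): after normalising $a_0=1$, the substitution $u_j=a_j/\gamma_j$ turns the hypotheses into $1=u_0\ge u_1\ge\cdots\ge u_n\ge 0$, so the admissible coefficient vectors form the order simplex whose vertices are the truncations of $\{\gamma_k\}$, and positivity of both sums for an arbitrary admissible sequence follows from positivity for the single extremal sequence $\{\gamma_k\}$. Your caveat about the degenerate case $a_1=\cdots=a_n=0$, where the sine sum vanishes identically, correctly identifies a defect in the statement as quoted.

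The gap is that you never prove the extremal case, which is the entire content of the theorem. The identity $\sigma_{2m+1}(\theta)=2\cos(\theta/2)\sum_{k=0}^{m}\gamma_{2k}\sin\bigl((2k+\tfrac12)\theta\bigr)$ is correct but only peels off a positive factor; it does not make the remaining sum visibly positive, and the induction you gesture at is not set up (there is no stated inductive hypothesis and no mechanism for passing from $m$ to $m+1$ near $\theta=\pi$, where $\sigma_m(\pi)=0$ and $\tau_m(\pi)=0$ for odd $m$, so the inequality degenerates). The remark about the boundary values of $\sqrt{(1+z)/(1-z)}$ having positive real and imaginary parts concerns the full series; positivity of a function's boundary values implies nothing about positivity of its partial sums --- that implication is exactly what the theorem asserts and is false for general functions with positive boundary data. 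As written, your argument reduces Vietoris's theorem to its hardest special case and then cites Vietoris (or Askey--Steinig) for that case, which makes it a reduction plus a citation rather than a proof. To close the gap you would have to actually execute one of the known arguments for $\sum_{k=1}^{m}\gamma_k\sin k\theta>0$, e.g.\ the Askey--Steinig integral representation or Vietoris's own case analysis splitting $(0,\pi)$ into ranges depending on $n$.
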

Vietoris \cite{vietoris-1958} observed that these two inequalities for the special case in which $a_k=\gamma_k$
where the sequence $\gamma_k$ is defined as above.
Several generalizations of Theorem \ref{thm:vietoris} can be found in the literature. For example, see \cite{brown-dai-wang-2007-ext-viet-ramanujan,
koumandos-2007-ext-viet-ramanujan, saiful-2012-stable-results-in-math,
sangal-swaminathan-positivity-alpha-beta}.
As an application of positive trigonometric sums, Ruscheweyh and Salinas
\cite{ruscheweyh-salinas-2000-AnnMarie} introduced the concept of stable functions.
Due to its wide significance, the generalization of Theorem \ref{thm:vietoris}
is of much interest. For the recent development
in this direction see \cite{sangal-swaminathan-positivity-alpha-beta} and the references therein. In \cite{sangal-swaminathan-positivity-alpha-beta}, the sequence $\{c_k\}$ given below is considered which is generalization of the sequence
$\{\gamma_k\}$ considered by Vietoris' \cite{vietoris-1958}.

In \cite{sangal-swaminathan-positivity-alpha-beta} the applications of
Theorem \ref{thm:cosine-positivity-sigma-bc} and
 Theorem \ref{thm:sine-positivity-sigma-bc} in finding the location of zeros of
 a class of trigonometric polynomials is discussed. Some new inequalities
 related to Gegenbauer polynomials are also given in \cite{sangal-swaminathan-positivity-alpha-beta}.
  It is of interest to interpret Theorem \ref{thm:cosine-positivity-sigma-bc} and
  Theorem \ref{thm:cosine-positivity-sigma-bc} in the context of geometric
 function theory. For this purpose, we recall some concepts and definitions.

The set of
analytic functions in the unit disc $\mathbb{D}:=\{z:|z|<1\}$ is denoted by $\mathcal{A}$ and the
set of all one-to-one (univalent) functions in $\mathbb{D}$ is denoted by $\mathcal{S}$.
Let $\mathcal{A}_0$ and $\mathcal{A}_1$ are the subset of $\mathcal{A}$ with normalization $f(0)=0,f'(0)=1$
and $f(0)=1$ respectively.
The following subclasses of $\mathcal{S}$ are useful for further discussion.
Let $\mathcal{S}^{\ast}(\alpha)$, $0\leq\alpha<1$, be the class of starlike functions of order $\alpha$,
$f\in\mathcal{A}$ satisfying $\mathrm{Re}\left(\frac{zf'(z)}{f(z)}\right)>\alpha$ and $\mathcal{C}(\alpha)$, $0\leq\alpha<1$
be the class of convex function of order $\alpha$, satisfying $\mathrm{Re}\left(1+\frac{zf''(z)}{f(z)}\right)>\alpha$, for
$z\in\mathbb{D}$. If we take $\alpha=0$, these two subclasses reduce to starlike and
convex class denoted by $\mathcal{S}^{\ast}$ and $\mathcal{C}$ respectively.
The relation between these two subclasses is given by Alexander transformation i.e.
$f\in\mathcal{C}(\alpha)\Longleftrightarrow zf'\in\mathcal{S}^{\ast}(\alpha)$. One another important subclass
$\mathcal{K}(\alpha)$ be the class of all close-to-convex functions $f\in\mathcal{A}$
with respect to a starlike function $g(z)\in\mathcal{S}^{\ast}$ if
$\mathrm{Re}\, e^{\iota\gamma}\left(\frac{zf'(z)}{g(z)}\right)>\alpha,\gamma\in\mathbb{R}$.
For information regarding these classes we refer to \cite{duren-1983-book,goodman-1983-book,pommerenke-1975-book}.
There is a proper inclusion to hold among these classes.
\begin{align*}
\mathcal{C}\subsetneqq\mathcal{S}^{\ast}\subsetneqq\mathcal{K}\subsetneqq\mathcal{S}.
\end{align*}
Further a function $f\in\mathcal{A}_0$ is called pre-starlike function of order $\alpha$, $0\leq\alpha<1$ if
$f(z)\ast k_{\alpha}(z)\in\mathcal{S}^{\ast}(\alpha)$, \cite[p.48]{ruscheweyh-1982-book}.
This class is denoted by $\mathcal{R}^{\ast}(\alpha)$,
where $k_{\alpha}(z)=\frac{z}{(1-z)^{2-2\alpha}}$ plays the vital role
as it is the extremal function of $\mathcal{S}^{\ast}(\alpha)$ and for a complete account of details on $\mathcal{R}^{\ast}(\alpha)$ see
\cite{ruscheweyh-1982-book}. It is obvious that
$\mathcal{R}^{\ast}(1/2)\equiv\mathcal{S}^{\ast}(1/2)$ and $\mathcal{R}^{\ast}(0)\equiv\mathcal{C}$. Here the
Hadamard product or convolution denoted by $\ast$ is defined as follows:

Let $f(z)=\displaystyle\sum_{k=0}^{\infty}a_kz^k$ and $g(z)=\displaystyle\sum_{k=0}^{\infty}b_kz^k$, $z\in\mathbb{D}$. Then
\begin{align}\label{eqn:hadamard-product}
(f\ast g)(z):=\sum_{k=0}^{\infty}a_kb_kz^k, \quad \hbox{ for all $z\in\mathbb{D}$.}
\end{align}
In the present context, the following lemma is of considerable interest, which plays important role in several problems in function theory involving duality technique.
\begin{lemma}\cite[p. 54]{ruscheweyh-1977-prestarlike}
\label{lemma:ruscheweyh-prestarlike}
Let $F$ be prestarlike of order $0\leq \gamma<1$, $G\in\mathcal{S}^{\ast}(\gamma)$ and $H$ is any analytic
function in $\mathbb{D}$. Then,
\begin{align*}
\frac{F\ast (GH)}{F\ast G}(\mathbb{D})\subset \overline{co}(H(\mathbb{D})),
\end{align*}
where $co(A)$ is the convex hull of a set $A$.
\end{lemma}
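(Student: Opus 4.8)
The plan is to reduce the containment first to a family of half--plane inequalities, then (via the Herglotz representation) to a single model inequality for the Herglotz kernels, and finally to the defining convolution property of prestarlike functions.

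\emph{Step 1 (reduction to half--planes).} Since $F\in\mathcal R^{\ast}(\gamma)$ and $G\in\mathcal S^{\ast}(\gamma)$, one has $F\ast G\in\mathcal S^{\ast}(\gamma)$ by Ruscheweyh's convolution theorem for prestarlike functions \cite{ruscheweyh-1982-book}; in particular $(F\ast G)(z)/z$ is zero--free in $\mathbb D$, so
\[
q_{H}(z):=\frac{F\ast(GH)}{F\ast G}(z)
\]
is analytic in $\mathbb D$ with $q_{H}(0)=H(0)$. A closed convex set equals the intersection of all closed half--planes containing it, so it suffices to fix such a half--plane $P=\{w:\mathrm{Re}(\overline{\lambda}(w-a))\ge0\}$, $|\lambda|=1$, with $H(\mathbb D)\subset P$, and to show $q_{H}(\mathbb D)\subset P$. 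Since $H\mapsto F\ast(GH)$ is linear and $F\ast(G\cdot c)=c\,(F\ast G)$ for every constant $c$, one has $q_{\overline{\lambda}(H-a)}=\overline{\lambda}\,(q_{H}-a)$; replacing $H$ by $\overline{\lambda}(H-a)$ reduces everything to
\[
\mathrm{Re}\,H(z)>0\ \text{in}\ \mathbb D\ \Longrightarrow\ \mathrm{Re}\,q_{H}(z)\ge0\ \text{in}\ \mathbb D .
\]
(If $H$ is constant this is immediate; a non--constant $H$ with $\mathrm{Re}\,H\ge0$ satisfies $\mathrm{Re}\,H>0$ in $\mathbb D$ by the open mapping theorem, so nothing is lost.)

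\emph{Step 2 (Herglotz representation and linearization).} Write $H(z)=\int_{|x|=1}\frac{1+xz}{1-xz}\,d\mu(x)+i\,\mathrm{Im}\,H(0)$ with $\mu\ge0$ a finite Borel measure on the unit circle. Fix $z_{0}\in\mathbb D$. The functional $H\mapsto q_{H}(z_{0})$ is linear and continuous, and, expanding the Taylor series involved (which converge uniformly on compact subsets of $\mathbb D$) and interchanging sum and integral, it commutes with integration against $\mu$. Hence, setting $p_{x}(z)=\frac{1+xz}{1-xz}$,
\[
q_{H}(z_{0})=\int_{|x|=1}q_{p_{x}}(z_{0})\,d\mu(x)+i\,\mathrm{Im}\,H(0).
\]
The final term has zero real part and $\mu\ge0$, so it suffices to prove $\mathrm{Re}\,q_{p_{x}}(z)\ge0$ for all $|x|=1$ and $z\in\mathbb D$.

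\emph{Step 3 (the model inequality; the main obstacle).} From $p_{x}(z)=\frac{2}{1-xz}-1$ we get $G\,p_{x}=2G_{x}-G$, where $G_{x}(z):=\frac{G(z)}{1-xz}$, and therefore
\[
q_{p_{x}}(z)=\frac{2\,(F\ast G_{x})(z)}{(F\ast G)(z)}-1 ,
\]
so the required estimate is equivalent to
\[
\mathrm{Re}\,\frac{(F\ast G_{x})(z)}{(F\ast G)(z)}\ \ge\ \frac12,\qquad z\in\mathbb D .
\]
This is exactly where the prestarlike hypothesis on $F$ is used decisively. I would derive it from Ruscheweyh's convolution characterizations of $\mathcal S^{\ast}(\gamma)$ and of $\mathcal R^{\ast}(\gamma)$ \cite{ruscheweyh-1982-book,ruscheweyh-1977-prestarlike} --- equivalently, from Ruscheweyh's duality principle for the dual pair $\bigl(\mathcal R^{\ast}(\gamma),\mathcal S^{\ast}(\gamma)\bigr)$: the inequality asserts that the analytic function $z\mapsto\frac{2(F\ast G_{x})(z)-(F\ast G)(z)}{(F\ast G)(z)}$ (which equals $1$ at the origin) omits the open left half--plane, and this in turn reduces to the zero--freeness in $\mathbb D$ of an explicit convolution of $F\ast G$ with the standard kernels that characterize starlikeness of order $\gamma$. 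I expect this to be the genuine work of the proof: Steps 1 and 2 are purely formal, while Step 3 is where the full geometric content of ``prestarlike of order $\gamma$'' enters and where one must appeal to the prestarlike machinery of \cite{ruscheweyh-1982-book}.
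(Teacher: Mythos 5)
First, note that the paper does not prove this lemma: it is quoted from Ruscheweyh's 1977 paper (it is Theorem 2.4 in \cite{ruscheweyh-1982-book}), so there is no in-paper argument to compare against and your attempt has to be measured against the classical proof. Your Steps 1 and 2 are correct and are exactly the standard formal reductions: writing $\overline{co}(H(\mathbb{D}))$ as an intersection of closed half-planes and using the affine equivariance of $H\mapsto q_H$ to reduce to the implication $\mathrm{Re}\,H>0\Rightarrow\mathrm{Re}\,q_H\ge 0$, then using the Herglotz representation and the linearity and continuity of $H\mapsto (F\ast(GH))(z_0)$ to reduce to the single kernel $p_x(z)=(1+xz)/(1-xz)$. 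Your algebra identifying the model inequality is also right: since $Gp_x=2G_x-G$ with $G_x=G/(1-xz)$, everything comes down to
\begin{align*}
\mathrm{Re}\,\frac{(F\ast G_x)(z)}{(F\ast G)(z)}\;\ge\;\frac12,\qquad |x|=1,\ z\in\mathbb{D}.
\end{align*}

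The genuine gap is that Step 3 is announced but never carried out, and Step 3 is the entire mathematical content of the lemma. You say you ``would derive it'' from Ruscheweyh's convolution characterizations and that you ``expect this to be the genuine work of the proof''; that is an accurate self-assessment, but it means the proof is not there. The displayed inequality is not a formal consequence of the definitions: in Ruscheweyh's development it is established together with the convolution theorem $\mathcal{R}^{\ast}(\gamma)\ast\mathcal{S}^{\ast}(\gamma)\subset\mathcal{S}^{\ast}(\gamma)$, via the dual (nonvanishing-of-convolution) characterization of $\mathcal{S}^{\ast}(\gamma)$ and a continuity argument, and neither the precise characterization you would need nor the deduction from it appears in your sketch. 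There is also a near-circularity to flag: you already invoke $F\ast G\in\mathcal{S}^{\ast}(\gamma)$ in Step 1, and the half-plane inequality of Step 3 is essentially of the same depth as that theorem --- the two are proved simultaneously in \cite{ruscheweyh-1982-book} --- so citing the one while gesturing at ``the machinery'' for the other does not yield a self-contained argument; at that level of granularity one may as well cite the lemma itself, as the paper does. To complete the proof you must actually prove the boxed-in inequality above, for instance by converting $\mathrm{Re}\,w\ge 1/2$ into a family of nonvanishing conditions on convolutions of $F$ with explicit kernels built from $G$ and appealing to the defining property $F\ast k_{\gamma}\in\mathcal{S}^{\ast}(\gamma)$.
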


 Another tool used in the sequel is the concept of subordination denoted by $\prec$.
An analytic function $f$ is subordinate to a univalent function $g$, written as
$f(z)\prec g(z)$, if there exists a Schwarz function $\omega(z):\mathbb{D}\rightarrow \mathbb{D}$, satisfying
$|\omega(z)|\leq |z|$ such that $f(z)=g(\omega(z))$.


To apply Theorem \ref{thm:cosine-positivity-sigma-bc} and Theorem \ref{thm:sine-positivity-sigma-bc}
in context of geometric function theory, we generalize the concept of
stable function by means of generalized Ces\`aro mean of type $(b-1;c)$.
For $f\in\mathcal{A}_1$ and $b+1>c>0$, the nth Ces\`aro mean of type ($b-1;c$) of
$f(z)=\displaystyle\sum_{k=0}^{\infty} a_kz^k \in\mathcal{A}_1$ is given by,
\begin{align}\label{eqn:definition-generalized-means}
\sigma_n^{(b-1,c)}(f,z):=\frac{1}{B_n}\sum_{k=0}^n B_{n-k} a_k z^k=\sigma_n^{(b-1,c)}(z)\ast f(z) ,
\quad \hbox{$n\in\mathbb{N}_0$},
\end{align}
where $B_k$ is defined in \eqref{eqn:definition-generalized-means}.
For $f\in\mathcal{A}$, we say $\sigma_n^{(b-1,c)}(f,z)$ is the
nth Ces\`aro mean of type $(b-1;c)$ of $f$. Geometric properties
of $\sigma_n^{(b-1,c)}(f,z)$ can be found in
\cite{sangal-swaminathan-geom-sigma-bc} and references therein.
Further $s_n(f,z)=\sigma_n^{(1-1,1)}(f,z)$ was studied by
Ruscheweyh with his collaborators, see \cite{ruscheweyh-salinas-2004-stable-JMAA}
and references therein. Similarly $\sigma_n^{\alpha}(f,z)=\sigma_n^{(1+\alpha-1,1)}(f,z)$
was studied by Saiful and Swaminathan in \cite{saiful-2012-stable-results-in-math}.

\section{ Generalized Ces\`aro stable function}
Using simple computation, \eqref{eqn:definition-generalized-means} can be rewritten in the following form:
\begin{align}\label{eqn:definition-Sn-in-n-1}
\sigma_n^{(b-1,c)}(f,z)=\left(\frac{c+n-1}{b+n-1}\right)\sigma_{n-1}^{(b-1,c)}(f,z)+
\frac{(b-c)}{b_n}\sum_{k=0}^{n-2}\frac{b_{n-k-1}}{(c+n-k-1)}a_kz^k\nonumber\\
+\frac{1}{b_n}\left(\frac{1+b-2c}{c}\right)a_{n-1}z^{n-1}+\frac{b_0}{b_n}a_nz^n.
\end{align}
In the sequel we denote $f_{\mu}(z):=\frac{1}{(1-z)^{\mu}}$ which
satisfies the following relations that are easy to verify.
\begin{align*}
\sigma_n^{(b-1,c)}(f_{\mu},z)'&=\left(\frac{c+n-1}{b+n-1}\right)\sigma_{n-1}^{(b-1,c)}(f_{\mu}',z), \\
z \sigma_n^{(b-1,c)}(f_{\mu},z)' &=\sigma_n^{(b-1,c)}(zf_{\mu}',z),\\
f_{\mu}-\frac{(1-z)}{\mu}f_{\mu}'&\equiv 0. 
\end{align*}
Now we state
the main result of this section. For the proof, we follow the procedure similar to the one given for Theorem 1.1 of \cite{ruscheweyh-salinas-2004-stable-JMAA}.

\begin{theorem}\label{thm:generalized-stable}
For $b\geq \max \{c,2c-1\}>0$ and $\mu\in[-1,1]$, the following equation holds.
\begin{align}\label{eqn:generalized-stable}
(1-z)^{\mu}\sigma_n^{(b-1,c)}(f_{\mu},z) \prec (1-z)^{\mu}.
\end{align}
\end{theorem}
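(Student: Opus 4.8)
The plan is to reduce the subordination \eqref{eqn:generalized-stable} to a positivity statement about partial sums of a trigonometric series, and then invoke Theorem \ref{thm:cosine-positivity-sigma-bc} and Theorem \ref{thm:sine-positivity-sigma-bc}. First I would observe that $(1-z)^{\mu} f_{\mu}(z) \equiv 1$, so the product $p_n(z) := (1-z)^{\mu}\sigma_n^{(b-1,c)}(f_{\mu},z)$ is a polynomial with $p_n(0)=1$. Writing $f_\mu(z)=\sum_{k\ge 0}\frac{(\mu)_k}{k!}z^k$ and using the definition \eqref{eqn:definition-generalized-means}, one computes the Taylor coefficients of $\sigma_n^{(b-1,c)}(f_\mu,z)$ explicitly as $\frac{B_{n-k}}{B_n}\frac{(\mu)_k}{k!}$, which is exactly (a constant multiple of) the sequence $\{c_k\}$ from \eqref{eqn:def-coeff}. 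Since $(1-z)^{-\mu}\ast$ acts on coefficients, multiplying back by $(1-z)^{\mu}$ and collecting terms, $p_n(z)$ should be expressible in a form where its boundary values $p_n(e^{i\theta})$ have real and imaginary parts given by the cosine and sine sums $\sum c_k\cos k\theta$ and $\sum c_k \sin k\theta$ (up to the standard rearrangement relating $(1-z)^\mu$ against a polynomial); this is the step where I would reuse the computation from Theorem 1.1 of \cite{ruscheweyh-salinas-2004-stable-JMAA}.

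Next, recall the classical equivalence (due to Ruscheweyh–Salinas) that for a polynomial $q$ with $q(0)=1$, the subordination $q(z)\prec (1-z)^{\mu}$ holds if and only if $q$ does not vanish on $\mathbb{D}$ and, on $|z|=1$, the curve $q(e^{i\theta})$ stays in the wedge $\{w : |\arg w| < \mu\pi/2\}$ when $\mu>0$ (with the dual statement for $\mu<0$, and the degenerate case $\mu=0$ meaning $q\equiv 1$). The wedge condition translates precisely into a sign condition on a combination of the cosine and sine partial sums of $\{c_k\}$ — namely that $\mathrm{Re}\big(e^{-i\mu\theta/2}\,p_n(e^{i\theta})\big)$, or the appropriate analogue, is positive for $0<\theta<\pi$, and by symmetry for $-\pi<\theta<0$. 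Thus I would show that the positivity of $\sum_{k=0}^n c_k\cos k\theta$ and $\sum_{k=1}^n c_k\sin k\theta$ on $(0,\pi)$, which is guaranteed by Theorems \ref{thm:cosine-positivity-sigma-bc} and \ref{thm:sine-positivity-sigma-bc} under the hypothesis $b\ge\max\{c,2c-1\}$ (the $2c-1$ comes from the even sine-sum bound $\mu\le (1+b)/c - 1/2$, and one checks $\mu\le 1$ is covered when $b\ge 2c-1$), yields the desired wedge containment. For $\mu\in[-1,0]$ one uses the reflection $(1-z)^{\mu}=1/(1-z)^{-\mu}$ together with the same sums, and $\mu=0,1$ are handled directly.

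The hard part will be the bookkeeping in the first step: verifying that the boundary polynomial $p_n(e^{i\theta})$ is genuinely governed by the sums $\sum c_k\cos k\theta$ and $\sum c_k\sin k\theta$ rather than by some shifted or differently-weighted variant, and in particular pinning down which of the two sine estimates in Theorem \ref{thm:sine-positivity-sigma-bc} is needed and hence why the sharper hypothesis $b\ge 2c-1$ (as opposed to merely $b\ge c$) appears. I would handle this by following the argument of \cite[Theorem 1.1]{ruscheweyh-salinas-2004-stable-JMAA} line by line, using the recursion \eqref{eqn:definition-Sn-in-n-1} and the three identities for $\sigma_n^{(b-1,c)}(f_\mu,z)$ listed just before the theorem to reduce the $n$-th case to the $(n-1)$-st, so that the positivity input is applied to all partial sums $\sum_{k=0}^{m}c_k\cos k\theta$ and $\sum_{k=1}^{m}c_k\sin k\theta$ for $m\le n$. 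Once the boundary curve is shown to lie in the correct wedge and $p_n$ is shown to be zero-free in $\mathbb{D}$ (which follows from the minimum principle applied to the zero-free nonvanishing outer part, or directly from the argument principle using the wedge bound), the subordination \eqref{eqn:generalized-stable} follows.
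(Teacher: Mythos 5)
There is a genuine gap here: the reduction you propose does not go through, for three independent reasons. First, the subordination $q\prec(1-z)^{\mu}$ is \emph{not} equivalent to the wedge condition $|\arg q|<\mu\pi/2$ on $\partial\mathbb{D}$: the image of $\mathbb{D}$ under $(1-z)^{\mu}$ is a \emph{bounded} proper subset of that wedge, so the argument condition is necessary but far from sufficient. This is precisely the distinction the paper itself draws between \eqref{eqn:subord-mu-leq-rho} and the weaker, unbounded-target condition \eqref{eqn:conj1}; your argument would at best address the latter, which is the content of the (open) Conjectures \ref{conj:1} and \ref{conj:2}, not of Theorem \ref{thm:generalized-stable}. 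Second, the positivity input you want to invoke is unavailable in the required range: Theorems \ref{thm:cosine-positivity-sigma-bc} and \ref{thm:sine-positivity-sigma-bc} hold only for $\mu\le\mu_0'$ with $\mu_0'<1$ (for $b=c$ it is the classical Vietoris-type root $\approx 0.308$), whereas Theorem \ref{thm:generalized-stable} is asserted for all $\mu\in[-1,1]$. Moreover those theorems concern the \emph{doubled} sequence $c_{2k}=c_{2k+1}=d_k$ of \eqref{eqn:def-coeff}, which enters the paper only through the factorization $(1-z^{2})\bigl(\sum_{k}d_kz^{2k}\bigr)^{2}=\bigl(\sum_{k}c_kz^{k}\bigr)\bigl(\sum_{k}(-1)^{k}c_kz^{k}\bigr)$ used in Theorem \ref{thm:conj1-rho=1/2}, not through the boundary values of $(1-z)^{\mu}\sigma_n^{(b-1,c)}(f_{\mu},z)$ (which, incidentally, is not a polynomial for non-integer $\mu$, contrary to your opening claim). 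Third, your guess for the origin of the hypothesis $b\ge 2c-1$ fails even arithmetically: $(1+b)/c-1/2\ge 1$ gives $b\ge(3c-2)/2$, not $b\ge 2c-1$.

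The paper's actual proof is purely coefficient-theoretic and makes no use of the trigonometric positivity theorems. One sets $h(z)=1-(1-z)\bigl[\sigma_n^{(b-1,c)}(f_{\mu},z)\bigr]^{1/\mu}$, so that \eqref{eqn:generalized-stable} is equivalent to $|h|\le 1$ with $h(0)=0$, and then uses the recursion \eqref{eqn:definition-Sn-in-n-1} together with the identities listed before the theorem to compute $\sigma_n^{(b-1,c)}(f_{\mu},z)-\frac{1-z}{\mu}\sigma_n^{(b-1,c)}(f_{\mu},z)'$ in closed form. This exhibits $h'(z)$ as $\bigl[\sigma_n^{(b-1,c)}(f_{\mu},z)\bigr]^{1/\mu-1}$ times a polynomial whose coefficients are $\frac{b-c}{B_n}\frac{B_{n-k-1}}{c+n-k-1}\frac{(\mu+1)_k}{k!}$ for $k\le n-2$, $\frac{1+b-2c}{c}\frac{(\mu+1)_{n-1}}{B_n(n-1)!}$, and $\frac{(\mu+1)_n}{n!}\frac{B_0}{B_n}$; the hypotheses $b\ge c$ and $b\ge 2c-1$ (i.e. $1+b-2c\ge0$) are exactly what make these nonnegative, and the case $-1\le\mu<0$ is handled by checking that $\bigl[\sigma_n^{(b-1,c)}(f_{\mu},z)\bigr]^{1/\mu-1}=(1-b(z))^{1/\mu-1}$ also has nonnegative coefficients. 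The conclusion then follows from $|h(z)|\le\int_0^1h'(t)\,dt=h(1)-h(0)=1$. To repair your write-up you should abandon the trigonometric route entirely and carry out this coefficient computation.
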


\begin{proof}
The nth Ces\`aro mean of type $(b-1;c)$ of
$f(z)=\displaystyle\sum_{k=0}^{\infty}a_k z^k \in \mathcal{A}_1$ is given in \eqref{eqn:definition-generalized-means}.
Let $h(z):=1-(1-z)\sigma_n^{(b-1,c)}(f_{\mu},z)^{\frac{1}{\mu}}$.
In order to prove our result it is sufficient to prove $|h(z)|\leq 1$.
Clearly, for $\mu=0$, $f_{\mu}=1$ and hence $|h(z)|\leq 1$. We consider
the reminder of the proof in two parts based on the range of $\mu$. For the
first part, let $\mu\in(0,1]$. Consider
\begin{align}
(1-z)\sigma_n^{(b-1,c)}(f_{\mu},z)'&=\sigma_n^{(b-1,c)}(f_{\mu},z)'-z\sigma_n^{(b-1,c)}(f_{\mu},z)' \nonumber \\
&=\left(\frac{c+n-1}{b+n-1}\right)\sigma_{n-1}^{(b-1,c)}(f_{\mu}',z)-\sigma_n^{(b-1,c)}(zf_{\mu}',z) \label{eqn:11}
\end{align}
Using $\eqref{eqn:definition-Sn-in-n-1}$, $\sigma_n^{(b-1,c)}(zf_{\mu}',z)$ can be rewritten as,
\begin{align}
\sigma_n^{(b-1,c)}(zf_{\mu}',z)=\left(\frac{c+n-1}{b+n-1}\right)\sigma_{n-1}^{(b-1,c)}(zf_{\mu}',z)
+\frac{(b-c)}{b_n}\sum_{k=0}^{n-2}\frac{B_{n-k-1}}{(c+n-k-1)}ka_kz^k\nonumber\\
+\left(\frac{1+b-2c}{c}\right)\frac{(n-1)a_{n-1}}{B_n}z^{n-1}
+\frac{B_0}{B_n}na_nz^n.\label{eqn:22}
\end{align}
After substituting the value of $a_k=\frac{(\mu)_k}{k!}$, from \eqref{eqn:11} and \eqref{eqn:22} we obtain,
\begin{align*}
&(1-z)\sigma_n^{(b-1,c)}(f_{\mu},z)' \\
&=\left(\frac{c+n-1}{b+n-1}\right)\sigma_{n-1}^{(b-1,c)}((1-z)f_{\mu}',z)
-\frac{(b-c)}{B_n}\sum_{k=0}^{n-2}\frac{B_{n-k-1}}{(c+n-k-1)}\frac{k(\mu)_k}{k!}z^k\\
&\quad \quad -\left(\frac{1+b-2c}{c}\right)\frac{(n-1)(\mu)_{n-1}}{B_n (n-1)!}z^{n-1}
-\frac{B_0}{B_n}\frac{n(\mu)_n}{n! }z^n .
\end{align*}
Therefore,
\begin{align*}
& \sigma_n^{(b-1,c)}(f_{\mu},z)-\frac{(1-z)}{\mu}\sigma_n^{(b-1,c)}(f_{\mu},z)'   \\
&=\left(\frac{c+n-1}{b+n-1} \right)\sigma_{n-1}^{(b-1,c)}\left(f_{\mu}-\frac{1-z}{\mu}f_{\mu}',z\right)
+\frac{(b-c)}{B_n}\sum_{k=0}^{n-2}\frac{B_{n-k-1}}{(c+n-k-1)}\left( \frac{(\mu)_k}{k!}+\frac{k(\mu)_k}{\mu k!}\right)z^k\\
&\quad \quad +\left(\frac{1+b-2c}{c}\right)\left(\frac{(\mu)_{n-1}}{(n-1)!}+\frac{(n-1)(\mu)_{n-1}}{\mu (n-1)!}\right) \frac{z^{n-1}}{B_n}     +\left(\frac{(\mu)_n}{n!}+\frac{n(\mu)_n}{\mu n!} \right)\frac{B_0}{B_n}z^n \\
&=\frac{(b-c)}{B_n}\sum_{k=0}^{n-2}\frac{B_{n-k-1}}{(c+n-k-1)}\frac{(\mu+1)_k}{k!}z^k
+\left(\frac{1+b-2c}{c}\right)\frac{1}{B_n}\frac{(\mu+1)_{n-1}}{(n-1)!} z^{n-1}+\frac{(\mu+1)_n}{n!}\frac{B_0}{B_n}z^n
\end{align*}
Further,
\begin{align*}
h'(z) &=\left[\sigma_n^{(b-1,c)}(f_{\mu},z)\right]^{\frac{1}{\mu}}-
        \frac{(1-z)}{\mu}\left[\sigma_n^{(b-1,c)}(f_{\mu},z)\right]^{\frac{1}{\mu}-1}\cdot\left[\sigma_n^{(b-1,c)}(f_{\mu},z)\right]'\nonumber\\
&=\left[ \sigma_n^{(b-1,c)}(f_{\mu},z)\right]^{\frac{1}{\mu}-1}\left[ \sigma_n^{(b-1,c)}(f_{\mu},z)-
\frac{(1-z)}{\mu}\sigma_n^{(b-1,c)}(f_{\mu},z)'\right]\\
&=\left[\sigma_n^{(b-1,c)}(f_{\mu},z) \right]^{\frac{1}{\mu}-1}\times
\Biggl[\frac{(b-c)}{B_n}\sum_{k=0}^{n-2}\frac{B_{n-k-1}}{(c+n-k-1)}\frac{(\mu+1)_k}{k!}z^k\\
&\quad \quad \quad +\left(\frac{1+b-2c}{c}\right)\frac{1}{B_n}\frac{(\mu+1)_{n-1}}{(n-1)!}z^{n-1}
+\frac{(\mu+1)_n}{n!}\frac{B_0}{B_n}z^n \Biggr].
\end{align*}
Clearly, $f_{\mu}(z)=(1-z)^{-\mu}=1+\mu z+\frac{\mu(\mu+1)}{2!}z^2+\cdots+\frac{(\mu)_k}{k!}z^k+\cdots$.
Since $0<\mu\leq 1$, the Taylor coefficients of $f_{\mu}$ are positive. Thus,
\begin{align*}
\left| \sigma_n^{(b-1,c)}(f_{\mu},z)\right|
                                      \leq \sum_{k=0}^n \frac{B_{n-k}}{B_n}\frac{(\mu)_k}{k!}|z|^k
                                        =\sigma_n^{(b-1,c)}(f_{\mu},|z|)
\end{align*}
We obtained that the Taylor coefficients of $h'(z)$ are positive and from the definition of $h(z)$,
we have $h(0)=0$ and $h(1)=1$. Hence,
\begin{align*}
|h(z)|=\left|\int_0^z h'(t)dt\right|\leq \int_0^1 |h'(tz)|dt\leq \int_0^1 h'(t)dt=1 ,\quad \hbox{$z\in\mathbb{D}$}.
\end{align*}
Now for the second case $-1\leq \mu<0 $, the coefficients of
$(1-z)^{-\mu}=1+\mu z+\frac{\mu(\mu+1)}{2!}z^2+\cdots+\frac{(\mu)_k}{k!}z^k+\cdots$ are negative except 1
and  $\sigma_n^{(b-1,c)}(f_{\mu},z)=1+\displaystyle\sum_{k=1}^n \frac{B_{n-k}}{B_n}\frac{(\mu)_k}{k!}z^k=1-b(z)$,
 where $b(z)$ has positive Taylor series coefficients. Therefore,
\begin{align*}
\sigma_n^{(b-1,c)}(f_{\mu},z)^{\frac{1}{\mu}-1} =(1-b(z))^{\frac{1}{\mu}-1}
                                             =1+\sum_{k=1}^{\infty} \frac{(1-\frac{1}{\mu})_k}{k!}(b(z))^k.
\end{align*}
 This implies, $\sigma_n^{(b-1,c)}(f_{\mu},z)^{\frac{1}{\mu}-1}$ has non-negative Taylor series coefficients and
following the same steps as in part one, we obtain the result.
\end{proof}

If we substitute $b=c=1$ then Theorem \ref{thm:generalized-stable} reduces to the following corollary given in \cite{ruscheweyh-salinas-2004-stable-JMAA}.
\begin{corollary}\cite{ruscheweyh-salinas-2004-stable-JMAA}
Let $s_n(z,f)$ denote the nth partial sum of $f(z)$. Then for $n\in\mathbb{N}\cup\{0\}$ and for $\mu\in[-1,1]$,
\begin{align*}
(1-z)^{\mu} s_n(z,f_{\mu})\prec (1-z)^{\mu}.
\end{align*}
\end{corollary}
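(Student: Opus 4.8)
The plan is to deduce the corollary as the degenerate case $b=c=1$ of Theorem~\ref{thm:generalized-stable}, so that essentially no new argument is required. First I would check that the hypotheses of Theorem~\ref{thm:generalized-stable} are met when $b=c=1$: here $\max\{c,2c-1\}=\max\{1,1\}=1=b>0$, so the requirement $b\geq\max\{c,2c-1\}>0$ holds (the boundary case being admissible since the hypothesis is a non-strict inequality), and the admissible range $\mu\in[-1,1]$ is exactly the one appearing in the corollary.

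Next I would compute the weights $B_k$ in this specialization. From $B_k=\frac{(b)_k}{(c)_k}\cdot\frac{1+b-c}{b}$ we get, at $b=c=1$, that $\frac{(1)_k}{(1)_k}=1$ and $\frac{1+1-1}{1}=1$, hence $B_k=1$ for every $k\geq1$; combined with $B_0=1$, this yields $B_k\equiv1$. Consequently the Ces\`aro mean of type $(0;1)$ collapses to the ordinary $n$th partial sum: by \eqref{eqn:definition-generalized-means},
\[
\sigma_n^{(0,1)}(f,z)=\frac{1}{B_n}\sum_{k=0}^n B_{n-k}\,a_k z^k=\sum_{k=0}^n a_k z^k=s_n(z,f)
\]
for any $f(z)=\sum_{k\geq0} a_k z^k$ and any $n\in\mathbb{N}_0=\mathbb{N}\cup\{0\}$. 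It then remains only to substitute $b=c=1$ into the conclusion \eqref{eqn:generalized-stable} of Theorem~\ref{thm:generalized-stable} and apply the identity above with $f=f_\mu$: the subordination $(1-z)^\mu\sigma_n^{(0,1)}(f_\mu,z)\prec(1-z)^\mu$ becomes precisely $(1-z)^\mu s_n(z,f_\mu)\prec(1-z)^\mu$, which is the claim.

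I do not anticipate any real obstacle; the only points deserving (trivial) attention are verifying that the boundary parameter choice $b=\max\{c,2c-1\}$ is permitted by Theorem~\ref{thm:generalized-stable} and matching the index conventions $n\in\mathbb{N}\cup\{0\}$ and $n\in\mathbb{N}_0$. As an alternative, one could re-run the proof of Theorem~\ref{thm:generalized-stable} directly with $b=c=1$: then the inner sum $\sum_{k=0}^{n-2}$ is multiplied by $b-c=0$ and the coefficient $\frac{1+b-2c}{c}$ also vanishes, so that $h'(z)=s_n(z,f_\mu)^{1/\mu-1}\,\frac{(\mu+1)_n}{n!}z^n$, which again has non-negative Taylor coefficients and satisfies $h(0)=0$, $h(1)=1$, giving $|h(z)|\leq\int_0^1 h'(t)\,dt=1$. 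The specialization route is, however, shorter and avoids repeating the computation.
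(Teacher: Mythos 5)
Your proposal is correct and coincides with the paper's own treatment: the corollary is obtained there precisely by specializing Theorem \ref{thm:generalized-stable} to $b=c=1$, under which $B_k\equiv 1$ and $\sigma_n^{(b-1,c)}(f,z)$ collapses to the partial sum $s_n(z,f)$. Your verification of the hypothesis $b\geq\max\{c,2c-1\}>0$ at the boundary and the identification of the means is exactly the intended (and only) content of the deduction.
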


Important member of $\mathcal{S}^{\ast}(\lambda)$ are $zf_{2-2\lambda}=\frac{z}{(1-z)^{2-2\lambda}}$
that plays the role of extremal function while studying several properties such as growth,
distortion etc. Clearly, from Theorem \ref{thm:generalized-stable} for $\lambda\in[1/2,1)$, we get
\begin{align}\label{eqn:starlike-stable-lambda}
(1-z)^{2-2\lambda}\sigma_n^{(b-1,c)}\left(\frac{1}{(1-z)^{2-2\lambda}},z\right)
\prec (1-z)^{2-2\lambda}, \hbox{\quad for all $z\in \mathbb{D}$}.
\end{align}
It seems that starlike function of order $\lambda$, $\lambda\in[1/2,1)$ is
comparably a much narrow class but on the other side it has several interesting properties.
For example, our next theorem exhibits that \eqref{eqn:starlike-stable-lambda}
remains valid while in the left hand side of \eqref{eqn:starlike-stable-lambda},  $f_{2-2\lambda}$ is replaced by
any $f\in\mathcal{S}^{\ast}(\lambda)$ for $\lambda\in[1/2,1)$.
\begin{theorem}\label{thm:starlike-generalized-stable}
Let $f\in \mathcal{S}^{\ast}(\lambda)$, for $\lambda\in[\frac{1}{2},1)$, then
\begin{align}\label{eqn:starlike-generalized-stable}
\frac{z\sigma_n^{(b-1,c)}(f/z,z)}{f} \prec (1-z)^{2-2\lambda}, \hbox{ $\forall z\in \mathbb{D}$}.
\end{align}
\end{theorem}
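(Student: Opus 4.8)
The plan is to derive \eqref{eqn:starlike-generalized-stable} from Theorem \ref{thm:generalized-stable} via the duality/convolution machinery encapsulated in Lemma \ref{lemma:ruscheweyh-prestarlike}. First I would rewrite the left-hand side of \eqref{eqn:starlike-generalized-stable} as a quotient of convolutions. Writing $\sigma_n^{(b-1,c)}(f/z,z)=\sigma_n^{(b-1,c)}(z)\ast(f/z)$ from \eqref{eqn:definition-generalized-means}, one checks that
\begin{align*}
z\,\sigma_n^{(b-1,c)}(f/z,z)=\bigl(z\,\sigma_n^{(b-1,c)}(z)\bigr)\ast f(z),
\end{align*}
so that, with $F:=z\,\sigma_n^{(b-1,c)}(z)$ (suitably normalized) the expression $z\sigma_n^{(b-1,c)}(f/z,z)/f$ has the shape $\bigl(F\ast(G H)\bigr)/\bigl(F\ast G\bigr)$ once we factor $f=G H$ appropriately. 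The natural choice, dictated by the target $(1-z)^{2-2\lambda}$, is to take $G=k_\lambda(z)=z/(1-z)^{2-2\lambda}$, the extremal starlike function of order $\lambda$, and $H=f/k_\lambda=f(z)(1-z)^{2-2\lambda}/z$; then $F\ast G$ is exactly $z\,\sigma_n^{(b-1,c)}(k_\lambda/z,z)=z\,\sigma_n^{(b-1,c)}\bigl(1/(1-z)^{2-2\lambda},z\bigr)$.

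Second, I would invoke Lemma \ref{lemma:ruscheweyh-prestarlike} with $\gamma=\lambda$. This requires two inputs: (i) $G=k_\lambda\in\mathcal{S}^{\ast}(\lambda)$, which is classical; and (ii) $F=z\,\sigma_n^{(b-1,c)}(z)$ (normalized to lie in $\mathcal{A}_0$) is prestarlike of order $\lambda$. Input (ii) is precisely where the hypothesis $\lambda\in[1/2,1)$ and the geometric properties of the Ces\`aro-type operator enter; for $\lambda=1/2$ prestarlikeness of order $1/2$ coincides with $\mathcal{S}^{\ast}(1/2)$, and the general statement should follow from the results on $\sigma_n^{(b-1,c)}$ recorded in \cite{sangal-swaminathan-geom-sigma-bc} (the operator preserves the relevant prestarlike classes under $b\ge\max\{c,2c-1\}$). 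Granting this, Lemma \ref{lemma:ruscheweyh-prestarlike} yields
\begin{align*}
\frac{F\ast(GH)}{F\ast G}(\mathbb{D})\subset\overline{co}\,\bigl(H(\mathbb{D})\bigr),
\end{align*}
i.e. the left-hand side of \eqref{eqn:starlike-generalized-stable} takes values in the closed convex hull of $\{f(z)(1-z)^{2-2\lambda}/z:z\in\mathbb{D}\}$. The remaining task is to show this convex hull sits inside the disc bounded by the image curve of $(1-z)^{2-2\lambda}$, which is convex in this range — precisely the content of the fact that for $f\in\mathcal{S}^{\ast}(\lambda)$ one has $f(z)(1-z)^{2-2\lambda}/z\prec(1-z)^{2-2\lambda}$ (the case $n=\infty$, or rather the defining property $z f'/f\prec(1+z)/(1-z)\cdot\text{(shifted)}$ integrated), together with convexity of $(1-z)^{2-2\lambda}(\mathbb{D})$ for $2-2\lambda\in(0,1]$.

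The main obstacle I anticipate is verifying step (ii), the prestarlikeness of order $\lambda$ of the kernel $z\,\sigma_n^{(b-1,c)}(z)$ on the full range $b\ge\max\{c,2c-1\}$; this is the analytic heart of the matter and is where the positivity Theorems \ref{thm:cosine-positivity-sigma-bc}–\ref{thm:sine-positivity-sigma-bc} and the companion paper \cite{sangal-swaminathan-geom-sigma-bc} must be brought to bear, rather than the softer convolution formalism. A secondary subtlety is the bookkeeping of normalizations when passing between $\mathcal{A}_1$ (where $f_\mu$ and the means naturally live) and $\mathcal{A}_0$ (where prestarlikeness is defined), together with checking that the subordination target $(1-z)^{2-2\lambda}$ is genuinely convex-univalent so that ``$\subset\overline{co}(H(\mathbb{D}))$'' upgrades to the subordination ``$\prec$''; once Theorem \ref{thm:generalized-stable} is in hand with $\mu=2-2\lambda$, this last point reduces to the already-established inclusion \eqref{eqn:starlike-stable-lambda}.
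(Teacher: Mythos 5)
Your overall strategy --- reduce the theorem to Theorem \ref{thm:generalized-stable} plus the duality Lemma \ref{lemma:ruscheweyh-prestarlike} --- is the right one, and it is indeed what the paper does, but the way you assign the roles $F,G,H$ in the lemma has two fatal problems. First, with your choices $F=z\sigma_n^{(b-1,c)}(z)$, $G=k_\lambda$, $H=f/k_\lambda$, the lemma controls the quotient $\bigl(F\ast(GH)\bigr)/(F\ast G)$, whose denominator is $F\ast G=z\sigma_n^{(b-1,c)}(f_{2-2\lambda},z)$, not the $f$ that appears in \eqref{eqn:starlike-generalized-stable}; your phrase ``the expression has the shape $(F\ast(GH))/(F\ast G)$ once we factor $f=GH$'' silently requires $F\ast G=f$, which is false, so the lemma does not address the quotient in the theorem at all. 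Second, the prestarlikeness you flag as the analytic heart --- that $z\sigma_n^{(b-1,c)}(z)$ lies in $\mathcal{R}^{\ast}(\lambda)$ for $\lambda\in[1/2,1)$ --- is simply not true: for $b=c=1$ and $n=1$ the kernel is $z+z^2$, whose derivative vanishes at $z=-1/2$, so it is not even univalent in $\mathbb{D}$ and in particular not in $\mathcal{S}^{\ast}(1/2)=\mathcal{R}^{\ast}(1/2)$. That step cannot be rescued from \cite{sangal-swaminathan-geom-sigma-bc} or anywhere else.

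The paper distributes the roles the other way, after which everything is soft. Since $f\in\mathcal{S}^{\ast}(\lambda)$, by the very definition of prestarlikeness there is a unique $F\in\mathcal{R}^{\ast}(\lambda)$ with $f=F\ast zf_{2-2\lambda}$; take $G=zf_{2-2\lambda}\in\mathcal{S}^{\ast}(\lambda)$ and $H=\sigma_n^{(b-1,c)}(f_{2-2\lambda},z)/f_{2-2\lambda}$. Then $GH=z\sigma_n^{(b-1,c)}(f_{2-2\lambda},z)=\bigl(z\sigma_n^{(b-1,c)}(z)\bigr)\ast\bigl(zf_{2-2\lambda}\bigr)$, so by associativity of $\ast$ one gets $F\ast(GH)=z\sigma_n^{(b-1,c)}(f/z,z)$ while $F\ast G=f$: Lemma \ref{lemma:ruscheweyh-prestarlike} now yields exactly the quotient of \eqref{eqn:starlike-generalized-stable} and places it in $\overline{co}\bigl(H(\mathbb{D})\bigr)$. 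The range of $H$ is controlled by Theorem \ref{thm:generalized-stable} with $\mu=2-2\lambda\in(0,1]$ (that is, by \eqref{eqn:starlike-stable-lambda}), and since $(1-z)^{2-2\lambda}$ is convex univalent the convex-hull containment upgrades to the subordination. Note that in this arrangement no prestarlikeness of the Ces\`aro kernel is required --- the prestarlike function comes for free from $f$ itself --- and Theorem \ref{thm:generalized-stable} enters precisely to bound $H$; your version never invokes Theorem \ref{thm:generalized-stable} at all, which is a sign that the decomposition is off.
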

\begin{proof}
Let $f\in\mathcal{S}^{\ast}(\lambda)$, then $\exists $ a unique prestarlike
function $F(z)$ of order $\lambda$ such that\\
 $f(z)=zf_{2-2\lambda}\ast F(z)$.
Then from Theorem \ref{thm:generalized-stable},
\begin{align*}
\frac{\sigma_n^{(b-1,c)}(f_{2-2\lambda},z)}{f_{2-2\lambda}}\prec \frac{1}{f_{2-2\lambda}}
\quad \hbox{for $\lambda\in[\frac{1}{2},1),z\in\mathbb{D}$}.
\end{align*}
Using Lemma \ref{lemma:ruscheweyh-prestarlike},
\begin{align*}
\frac{z \sigma_n^{(b-1,c)}(f/z,z)}{f} &=\frac{z(\sigma_n^{(b-1,c)}(z)*\frac{f(z)}{z})}{f}
                                        =\frac{z\sigma_n^{(b-1,c)}(z)*f(z)}{F(z)*zf_{2-2\lambda}}\\
                                        &=\frac{z\sigma_n^{(b-1,c)}(z)*(F(z)*zf_{2-2\lambda})}{F(z)*zf_{2-2\lambda}}
                                        =\frac{F(z)*(z\sigma_n^{(b-1,c)}(z)*zf_{2-2\lambda})}{F(z)*zf_{2-2\lambda}}\\
                                        &=\frac{F(z)*\left(zf_{2-2\lambda}.\frac{\sigma_n^{(b-1,c)}(f_{2-2\lambda},z)}{f_{2-2\lambda}} \right)}{F(z)*zf_{2-2\lambda}}
                                        \in \overline{co}\left( \frac{\sigma_n^{(b-1,c)}(f_{2-2\lambda},z)}{f_{2-2\lambda}} (\mathbb{D})\right),
\end{align*}
This means by Lemma \ref{lemma:ruscheweyh-prestarlike}, the range of $\frac{\sigma_n^{(b-1,c)}(f/z,z)}{f/z}$ lies in the closed convex hull of image of $\frac{\sigma_n^{(b-1,c)}(f_{2-2\lambda},z)}{f_{2-2\lambda}}$
under $\mathbb{D}$. From \eqref{eqn:starlike-stable-lambda}, for $\lambda\in[\frac{1}{2},1)$,
we have $\frac{\sigma_n^{(b-1,c)}(f_{2-2\lambda},z)}{f_{2-2\lambda}}\prec \frac{1}{f_{2-2\lambda}}$.
Therefore,
\begin{align*}
\frac{\sigma_n^{(b-1,c)}(f/z,z)}{f/z} \prec \frac{1}{f_{2-2\lambda}},
\end{align*}
which is equivalent to \eqref{eqn:starlike-generalized-stable} and the proof is
complete.
\end{proof}

Theorem \ref{thm:starlike-generalized-stable} has several consequences
with Kakeya Enestr$\ddot{o}$m theorem, that will be discussed in Section \ref{sec:consequences}.
Taking $b=c=1$, it reduces to the following result given by Ruscheweyh \cite{ruscheweyh-salinas-2004-stable-JMAA}.
\begin{corollary}\cite{ruscheweyh-salinas-2004-stable-JMAA}
Let $f\in\mathcal{S}^{\ast}(\lambda),\lambda\in[1/2,1)$. Then for $n\in\mathbb{N}\cup\{0\}$,
\begin{align*}
\frac{zs_n(z,f/z)}{f}\prec \frac{1}{f_{2-2\lambda}}.
\end{align*}
\end{corollary}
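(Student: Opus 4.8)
The plan is to obtain this corollary as the boundary specialization $b=c=1$ of Theorem~\ref{thm:starlike-generalized-stable}. The first step is to record how the generalized Ces\`aro mean of type $(b-1;c)$ collapses to the ordinary partial sum under this choice. Setting $b=c=1$ in the defining weights $B_k=\frac{(b)_k}{(c)_k}\frac{1+b-c}{b}$ gives, for every $k\geq 1$,
\begin{align*}
B_k=\frac{(1)_k}{(1)_k}\cdot\frac{1+1-1}{1}=1,
\end{align*}
while $B_0=1$ by convention, so all the weights equal unity. Substituting into \eqref{eqn:definition-generalized-means} then yields
\begin{align*}
\sigma_n^{(1-1,1)}(f,z)=\frac{1}{B_n}\sum_{k=0}^n B_{n-k}a_kz^k=\sum_{k=0}^n a_kz^k=s_n(z,f),
\end{align*}
which is exactly the identification $s_n(z,f)=\sigma_n^{(1-1,1)}(f,z)$ already noted in the preliminaries.

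The second step is to verify that the hypotheses of Theorem~\ref{thm:starlike-generalized-stable} survive at $b=c=1$. That theorem carries over the parameter constraint $b\geq\max\{c,2c-1\}>0$ from Theorem~\ref{thm:generalized-stable}. With $b=c=1$ one has $\max\{c,2c-1\}=\max\{1,1\}=1=b$, so the inequality holds with equality, while $b>0$ and $b+1>c>0$ are clear. Hence Theorem~\ref{thm:starlike-generalized-stable} applies verbatim to any $f\in\mathcal{S}^{\ast}(\lambda)$ with $\lambda\in[\tfrac12,1)$ once we put $b=c=1$.

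Finally I would simply read off the conclusion. For $b=c=1$ the subordination \eqref{eqn:starlike-generalized-stable} states
\begin{align*}
\frac{z\sigma_n^{(1-1,1)}(f/z,z)}{f}\prec (1-z)^{2-2\lambda}.
\end{align*}
Applying the Step-1 reduction $\sigma_n^{(1-1,1)}(f/z,z)=s_n(z,f/z)$ on the left, and the elementary identity $(1-z)^{2-2\lambda}=1/f_{2-2\lambda}$ on the right (immediate from $f_{2-2\lambda}(z)=(1-z)^{-(2-2\lambda)}$), the statement becomes
\begin{align*}
\frac{zs_n(z,f/z)}{f}\prec\frac{1}{f_{2-2\lambda}},
\end{align*}
which is precisely the assertion of the corollary. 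Since the whole argument is a substitution of parameter values into an already-proven theorem, there is no genuine obstacle; the only point deserving a moment's attention is confirming that the boundary case $b=c=1$ still meets $b\geq\max\{c,2c-1\}$, which it does with equality.
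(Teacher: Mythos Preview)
Your proposal is correct and follows exactly the paper's approach: the corollary is stated immediately after the remark ``Taking $b=c=1$, it reduces to the following result,'' so the paper's proof is precisely the specialization you carry out. Your added care in verifying $B_k\equiv 1$ and in checking that $b=c=1$ meets the constraint $b\geq\max\{c,2c-1\}>0$ (inherited from Theorem~\ref{thm:generalized-stable}) is a welcome elaboration, but the underlying argument is identical.
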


\begin{remark}\label{remark:cesaro-stable}
If we take $b=1+\beta$ and $c=1$, then it was proved in \cite{saiful-2012-stable-results-in-math} that for $\beta\geq 0$,
\begin{align*}
\frac{\sigma_n^{\beta}(f_{\mu},z)}{f_{\mu}}\prec \left\{
                                              \begin{array}{ll}
                                                \frac{1}{f_{\mu-\beta}}, & \hbox{$\mu\in[-1,0]$;} \\
                                                \frac{1}{f_{\mu+\beta}}, & \hbox{$\mu\in(0,1]$ such that $\mu+\beta\leq 1$.}
                                              \end{array}
                                            \right.
\end{align*}
The condition $\mu+\beta\leq1$ restricts $\beta$ to lie in $[0,1]$ where as Theorem \ref{thm:generalized-stable}
does not impose an upper bound on $\beta$ and moreover
\begin{align*}
\frac{1}{f_{\mu}} &\prec \frac{1}{f_{\mu+\beta}}, \quad \hbox{$\mu\in(0,1] $ where $\mu+\beta\leq1$}.\\
\frac{1}{f_{\mu}} &\prec \frac{1}{f_{\mu-\beta}}, \quad \hbox{$\mu\in[-1,0]$}.
\end{align*}
So, Theorem \ref{thm:generalized-stable} improves the result in \cite[Theorem 2.2]{saiful-2012-stable-results-in-math}.
A similar comparison can be made for
Theorem \ref{thm:starlike-generalized-stable} with \cite[Theorem 2.3]{saiful-2012-stable-results-in-math}.
\end{remark}

Theorem \ref{thm:starlike-generalized-stable} leads to a new
definition of generalized Ces\`aro stable functions.

\begin{definition}[Generalized Ces\`aro Stable Function]
\label{def:gen-stable}
A function $f\in\mathcal{A}_1$ is said to be n-generalized
Ces\`aro stable with respect to $F\in\mathcal{A}_1$ if
\begin{align}\label{eqn:definition-gen-cesaro-stable}
\frac{\sigma_n^{(b-1,c)}(f,z)}{f(z)}\prec \frac{1}{F(z)}
\end{align}
holds for some $n\in\mathbb{N}$. We call $f$ as n-generalized Ces\`aro stable
if it is n-generalized Ces\`aro stable with respect to itself.
If it is n-generalized Ces\`aro stable with respect to $F(z)$ for every n,
then it is said to be generalized Ces\`aro stable with respect
to $F(z)$.
\end{definition}

\begin{remark}
If we take $b=1+\beta, c=1$ then \eqref{eqn:definition-gen-cesaro-stable} reduces to
\begin{align*}
\frac{S_n^{\beta}(f,z)}{f} \prec \frac{1}{F(z)}
\end{align*}
gives the $(n,\beta)$ Ces\`aro-stability of $f$ about $F(z)$
\cite{saiful-2012-stable-results-in-math} which if
$\beta=0$ further reduces to stability of $f$ about $F(z)$
\cite{ruscheweyh-salinas-2004-stable-JMAA}.
\end{remark}

\begin{lemma}\cite[Proposition 5]{koumandos-ruscheweyh-2007-conjecture-JAT}\label{lemma:koumandos-ruscheweyh-2007-proposition}
For $\alpha,\beta>0$. If $F\prec(1-z)^{\alpha}$ and $G\prec(1-z)^{\beta}$ then $FG\prec(1-z)^{\alpha+\beta}$, $z\in\mathbb{D}$.
\end{lemma}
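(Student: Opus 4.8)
The plan is to prove Lemma~\ref{lemma:koumandos-ruscheweyh-2007-proposition} via a reduction to the classical result that a product of two functions subordinate to convex univalent functions behaves well under a logarithmic change of variable, using the fact that the power functions $(1-z)^{\alpha}$ are in fact convex univalent in $\mathbb{D}$ for $0<\alpha\le 1$, and that $z\mapsto(1-z)^{\alpha}$ for general $\alpha>0$ can be handled by writing $(1-z)^\alpha = \big((1-z)^{\alpha/N}\big)^N$ with $N$ large enough that each factor is convex. First I would observe that $F\prec(1-z)^{\alpha}$ is equivalent, via a Schwarz function $\omega_1$, to $F(z)=(1-\omega_1(z))^{\alpha}$, and similarly $G(z)=(1-\omega_2(z))^{\beta}$; hence $\log F(z)=\alpha\log(1-\omega_1(z))$ and $\log G(z)=\beta\log(1-\omega_2(z))$. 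Since $w\mapsto\log(1-w)$ maps $\mathbb{D}$ onto a convex domain (it is a convex univalent function), we get $\log F\prec\alpha\log(1-z)$ and $\log G\prec\beta\log(1-z)$, and these two subordinations are to half-plane--type convex regions.

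The key step is then to add these: I would use the standard subordination fact that if $p_1\prec q$ and $p_2\prec q$ with $q$ convex univalent, then any convex combination $t p_1+(1-t)p_2\prec q$; more precisely, with $q(z)=\log(1-z)$, one has $\alpha\log(1-\omega_1(z))+\beta\log(1-\omega_2(z))\prec(\alpha+\beta)\log(1-z)$. This is because $\tfrac{\alpha}{\alpha+\beta}\log(1-\omega_1(z))+\tfrac{\beta}{\alpha+\beta}\log(1-\omega_2(z))$ lies, for each fixed $z$, in the convex hull of $\log(1-\mathbb{D})$, which equals $\log(1-\mathbb{D})$ itself by convexity of that image; combined with the fact that this convex-combination function vanishes at $0$, a normal-families / Lindel\"of-type argument (or directly the coefficient-free subordination principle for convex $q$) yields the subordination to $(\alpha+\beta)\log(1-z)$. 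Exponentiating, $FG=\exp(\log F+\log G)\prec\exp\big((\alpha+\beta)\log(1-z)\big)=(1-z)^{\alpha+\beta}$, using that $\exp$ is univalent on the (bounded, in the relevant direction) image region and preserves subordination when composed with a univalent map whose image is the target. Care is needed with branch cuts of $\log$ and of the power function; this is handled by noting $1-\omega_i(z)$ omits $0$ and stays in the right half-plane-like region $\{ \mathrm{Re}\,w>0\}$ shifted appropriately — actually $1-\mathbb{D}$ is the disc centered at $1$ of radius $1$, on which the principal branch of $\log$ is single-valued and has convex image.

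The main obstacle I anticipate is making the ``convex combination stays subordinate'' step fully rigorous when $\alpha,\beta$ are arbitrary positive reals rather than at most $1$: the naive issue is that $\log(1-z)$ is convex univalent, so scaling by $\alpha$ or adding does not leave the model region, but one must check that the resulting function is genuinely subordinate (not merely range-contained) — i.e.\ that it can be written as $(\alpha+\beta)\log(1-\psi(z))$ for a \emph{single} Schwarz function $\psi$. This follows from the Herglotz/Riesz representation or, cleanly, from the observation that subordination to a convex univalent $q$ is equivalent to the range condition plus agreement at $z=0$ \emph{together with} the fact that $q^{-1}\!\big(\tfrac{\alpha}{\alpha+\beta}\log(1-\omega_1)+\tfrac{\beta}{\alpha+\beta}\log(1-\omega_2)\big)$, with $q(w)=\log(1-w)$, is analytic, maps $0$ to $0$, and maps $\mathbb{D}$ into $\mathbb{D}$ by the convexity of $q(\mathbb{D})$; Schwarz's lemma then bounds it. An alternative, perhaps cleaner route avoiding logarithms entirely is to invoke Lemma~\ref{lemma:ruscheweyh-prestarlike} with a suitable prestarlike $F$ (the Ruscheweyh convolution machinery), since products of the form $(1-z)^{-\gamma}$ are exactly the kernels appearing there; but the logarithmic-convexity argument above is the most direct and is the approach I would write up, citing \cite{ruscheweyh-1982-book} for the convex-subordination principle.
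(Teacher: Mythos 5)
The paper offers no proof of this lemma: it is quoted verbatim from \cite[Proposition 5]{koumandos-ruscheweyh-2007-conjecture-JAT}, so there is nothing in-paper to compare against. Your argument is correct, and it is in fact the standard proof of that proposition. The essential chain is sound: write $F=(1-\omega_1)^{\alpha}$, $G=(1-\omega_2)^{\beta}$ with Schwarz functions $\omega_i$; note that $q(z)=\log(1-z)$ is convex univalent on $\mathbb{D}$ (the principal branch is single-valued on the disc $1-\mathbb{D}$); form the convex combination $\tfrac{\alpha}{\alpha+\beta}q(\omega_1)+\tfrac{\beta}{\alpha+\beta}q(\omega_2)$, whose values lie in the open convex set $q(\mathbb{D})$ and which vanishes at $0$, so that $\psi:=q^{-1}\bigl(\tfrac{\alpha}{\alpha+\beta}q(\omega_1)+\tfrac{\beta}{\alpha+\beta}q(\omega_2)\bigr)$ is an analytic self-map of $\mathbb{D}$ fixing $0$; exponentiate to get $FG=(1-\psi)^{\alpha+\beta}$. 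You correctly isolate the one genuinely delicate point, namely upgrading ``range contained in a convex set plus agreement at $0$'' to honest subordination via $q^{-1}$ and the Schwarz lemma.

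Two small trims. The preliminary device $(1-z)^{\alpha}=\bigl((1-z)^{\alpha/N}\bigr)^{N}$ is never used and can be deleted. And the worry about $\exp$ being ``univalent on the image region'' is unnecessary: once you have the identity $\alpha\log(1-\omega_1)+\beta\log(1-\omega_2)=(\alpha+\beta)\log(1-\psi)$ between analytic functions, exponentiating gives $FG=(1-\psi)^{\alpha+\beta}$ directly, and $FG\prec(1-z)^{\alpha+\beta}$ then holds by the Schwarz-function definition of subordination (which is the right definition here, since $(1-z)^{\alpha+\beta}$ need not be univalent when $\alpha+\beta>2$).
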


Now for $0<\mu\leq \rho \leq 1$, we have the following corollary of Theorem \ref{thm:generalized-stable}
following the same procedure as in \cite[page 57]{koumandos-ruscheweyh-2007-conjecture-JAT}.

\begin{corollary}
For $0<\mu\leq \rho \leq 1$, and  for $b\geq \max \{c,2c-1\}>0$ we have
\begin{align}\label{eqn:subord-mu-leq-rho}
(1-z)^{\rho}\sigma_n^{(b-1,c)}(f_{\mu},z) \prec (1-z)^{\rho}, z\in\mathbb{D}.
\end{align}
\end{corollary}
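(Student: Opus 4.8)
The plan is to bootstrap from Theorem~\ref{thm:generalized-stable} by writing the exponent $\rho$ as $\mu+(\rho-\mu)$, splitting the left-hand side of \eqref{eqn:subord-mu-leq-rho} into a product of two functions each of which is already known to be subordinate to the appropriate power of $1-z$, and then invoking the product rule for such subordinations (Lemma~\ref{lemma:koumandos-ruscheweyh-2007-proposition}). First I would dispose of the degenerate case: if $\rho=\mu$, then \eqref{eqn:subord-mu-leq-rho} is exactly the conclusion of Theorem~\ref{thm:generalized-stable}, noting that $\mu\in(0,1]\subset[-1,1]$ and that the hypothesis $b\geq\max\{c,2c-1\}>0$ is precisely what Theorem~\ref{thm:generalized-stable} requires. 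So from now on I assume $0<\mu<\rho\leq1$.

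Next I would write
\[
(1-z)^{\rho}\sigma_n^{(b-1,c)}(f_{\mu},z)=\Bigl[(1-z)^{\mu}\sigma_n^{(b-1,c)}(f_{\mu},z)\Bigr]\cdot(1-z)^{\rho-\mu}.
\]
By Theorem~\ref{thm:generalized-stable} the first factor satisfies $(1-z)^{\mu}\sigma_n^{(b-1,c)}(f_{\mu},z)\prec(1-z)^{\mu}$, while the second factor trivially satisfies $(1-z)^{\rho-\mu}\prec(1-z)^{\rho-\mu}$ (subordination via the identity Schwarz function). Since $\mu>0$ and $\rho-\mu>0$, Lemma~\ref{lemma:koumandos-ruscheweyh-2007-proposition} applies with $\alpha=\mu$ and $\beta=\rho-\mu$, yielding
\[
(1-z)^{\rho}\sigma_n^{(b-1,c)}(f_{\mu},z)\prec(1-z)^{\mu+(\rho-\mu)}=(1-z)^{\rho},\qquad z\in\mathbb{D},
\]
which is exactly \eqref{eqn:subord-mu-leq-rho}.

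Since the argument is simply the composition of two results already in hand, there is no substantive obstacle; the only points needing a word of care are the degenerate endpoint $\rho=\mu$ (where Lemma~\ref{lemma:koumandos-ruscheweyh-2007-proposition} is not applicable because it requires $\beta>0$, but where the claim is immediate from Theorem~\ref{thm:generalized-stable}) and the implicit observation that $(1-z)^{\rho}$ is univalent on $\mathbb{D}$ for $\rho\in(0,1]$, so that the subordination in \eqref{eqn:subord-mu-leq-rho} is meaningful; the latter is standard and is already used in the same form in Theorem~\ref{thm:generalized-stable}.
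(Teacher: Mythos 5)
Your proof is correct and follows essentially the same route the paper intends: the paper states the corollary immediately after Lemma~\ref{lemma:koumandos-ruscheweyh-2007-proposition} and invokes ``the same procedure as in \cite[page 57]{koumandos-ruscheweyh-2007-conjecture-JAT}'', which is precisely the factorization $(1-z)^{\rho}\sigma_n^{(b-1,c)}(f_{\mu},z)=\bigl[(1-z)^{\mu}\sigma_n^{(b-1,c)}(f_{\mu},z)\bigr](1-z)^{\rho-\mu}$ combined with Theorem~\ref{thm:generalized-stable} and that lemma. Your explicit handling of the endpoint $\rho=\mu$ is a sensible extra precaution but introduces nothing beyond the paper's argument.
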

 The relation \eqref{eqn:subord-mu-leq-rho} is sharp in the sense that it will not hold for $\mu>\rho$. It is clear when
 $n$ becomes large then left hand side of \eqref{eqn:subord-mu-leq-rho} becomes unbounded and is subordinate to a
 bounded domain which is not possible.

If we change the right hand side of \eqref{eqn:subord-mu-leq-rho} by replacing the bounded function $(1-z)^{\rho}$, $0\leq\rho< 1$ by the unbounded one
$\left(\frac{1+z}{1-z}\right)^{\rho}$, $0\leq\rho< 1$, then the subordination
in \eqref{eqn:subord-mu-leq-rho} is still valid because
$(1-z)^{\rho}\prec\left(\frac{1+z}{1-z}\right)^{\rho}$ in $\mathbb{D}$.
Now this becomes a very interesting problem and leads to some new directions.
This situation leads to the following definition.

\begin{definition}
For $\rho\in(0,1]$, define $\mu(\rho,b-1,c)$ as the maximal number such that
\begin{align}\label{eqn:conj1}
(1-z)^{\rho} \sigma_n^{(b-1,c)}(f_{\mu},z)\prec \left(\frac{1+z}{1-z}\right)^{\rho},\quad n\in\mathbb{N}
\end{align}
holds for all $0<\mu\leq\mu(\rho,b-1,c)$.
\end{definition}
Writing
\begin{align*}
(1-z)^{2\rho-1}\sigma_n^{b-1,c}(f_{\mu},z)
=(1-z)^{\rho}\sigma_n^{b-1,c}(f_{\mu},z)\frac{1}{(1-z)^{1-\rho}}
\end{align*}
Then \eqref{eqn:conj1} implies,
\begin{align}\label{eqn:conj11}
\mathrm{Re} \left[(1-z)^{2\rho-1}\sigma_n^{b-1,c}(f_{\mu},z)\right]>0,
\quad \hbox{$z\in\mathbb{D}$ and $n\in\mathbb{N}$}.
\end{align}

Motivated by Conjecture 1 given in \cite{koumandos-ruscheweyh-2007-conjecture-JAT},
numerical evidences suggests the validity of the following conjecture
given below.

\begin{conj}\label{conj:1}
For $\rho\in(0,1]$ we have $\mu(\rho,b-1,c)=\mu^{\ast}(\rho,b-1,c)$,
where $\mu^{\ast}(\rho,b-1,c)$ is the unique solution in (0,1] of the equation
\begin{align}\label{eqn:conj1-integral}
\int_0^{(\rho+1)\pi}\sin(t-\rho\pi)t^{\mu-1}\left(1-\frac{t}{(\rho+1)\pi}\right)^{b-c}dt=0.
\end{align}
\end{conj}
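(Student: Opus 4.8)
As this is a conjecture, I describe the line of attack I would follow, modelled on the partial-sum case treated in \cite{koumandos-ruscheweyh-2007-conjecture-JAT} (to which Conjecture \ref{conj:1} reduces when $b=c=1$, since then $\sigma_n^{(b-1,c)}(f,z)=s_n(f,z)$); it splits into the two inequalities $\mu(\rho,b-1,c)\le\mu^{\ast}(\rho,b-1,c)$ and $\mu(\rho,b-1,c)\ge\mu^{\ast}(\rho,b-1,c)$. The first step is to recast \eqref{eqn:conj1} on the boundary: since $0<\rho\le1$, the function $\left(\frac{1+z}{1-z}\right)^{\rho}$ is univalent onto the convex sector $\{w:|\arg w|<\rho\pi/2\}$, so \eqref{eqn:conj1} is equivalent to $(1-z)^{\rho}\sigma_n^{(b-1,c)}(f_{\mu},z)$ taking its values in that sector; applying the minimum principle to the harmonic function $\arg$ and using $1-e^{i\theta}=2\sin(\theta/2)\,e^{i(\theta-\pi)/2}$, this reduces to a condition on the boundary values $\sigma_n^{(b-1,c)}(f_{\mu},e^{i\theta})$ whose linear shadow, precisely \eqref{eqn:conj11}, is the positivity of the generalised cosine sum
\begin{align*}
\sum_{k=0}^{n}\frac{B_{n-k}}{B_n}\,\frac{(\mu)_k}{k!}\cos\Bigl(\bigl(k+\rho-\tfrac12\bigr)\theta-\bigl(\rho-\tfrac12\bigr)\pi\Bigr)>0,\qquad 0<\theta<2\pi.
\end{align*}
It should be stressed that the genuine subordination \eqref{eqn:conj1} is strictly stronger than this single positivity, so the proof must also exploit the convolution/duality machinery of \cite{ruscheweyh-salinas-2004-stable-JMAA} used for Theorem \ref{thm:starlike-generalized-stable}.

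For the bound $\mu(\rho,b-1,c)\le\mu^{\ast}$ I would first check that $\Phi(\mu):=\int_0^{(\rho+1)\pi}\sin(t-\rho\pi)\,t^{\mu-1}\bigl(1-\tfrac{t}{(\rho+1)\pi}\bigr)^{b-c}\,dt$ has a unique zero $\mu^{\ast}$ in $(0,1]$: on $\bigl(0,(\rho+1)\pi\bigr)$ the factor $\sin(t-\rho\pi)\bigl(1-\tfrac{t}{(\rho+1)\pi}\bigr)^{b-c}$ changes sign once, at $t=\rho\pi$, so by the variation-diminishing property of the Laplace-type family $\{t^{\mu-1}\}_{\mu>0}$ the map $\mu\mapsto\Phi(\mu)$ changes sign at most once, and combining this with the endpoint behaviour of $\Phi$ (notably $\Phi(\mu)\to-\infty$ as $\mu\to0^{+}$ when $\rho<1$, and the sign of $\Phi(1)$) pins the root. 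Then, for $\mu>\mu^{\ast}$, I would exhibit a violation of the displayed positivity, hence of \eqref{eqn:conj1}: letting $z=re^{i\theta}$ with $r\to1^{-}$ and the critical scaling $\theta=(\rho+1)\pi/n$, and using $\tfrac{B_{n-k}}{B_n}\to\bigl(1-\tfrac{k}{n}\bigr)^{b-c}$ (which follows from $\tfrac{(b)_{n-k}(c)_n}{(c)_{n-k}(b)_n}\sim\bigl(1-\tfrac{k}{n}\bigr)^{b-c}$) together with $\tfrac{(\mu)_k}{k!}\sim\tfrac{k^{\mu-1}}{\Gamma(\mu)}$, dominated convergence shows that the suitably normalised sum tends, up to a positive factor, to $-\Phi(\mu)$; since $\Phi(\mu)>0$ for $\mu>\mu^{\ast}$ this limit is negative, so the positivity fails for all large $n$ and $\mu(\rho,b-1,c)\le\mu^{\ast}$. (A further estimate, that among all scalings $\theta=\tau/n$ the limiting profile is least favourable at $\tau=(\rho+1)\pi$, accounts for the appearance of that value in \eqref{eqn:conj1-integral}.)

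The reverse bound $\mu(\rho,b-1,c)\ge\mu^{\ast}$ — establishing \eqref{eqn:conj1} for every $n$ and every $\mu\in(0,\mu^{\ast}]$ — is the hard half and the place I expect the real obstruction. Two routes I would attempt: (i) an induction on $n$ through the recurrence \eqref{eqn:definition-Sn-in-n-1}, which writes $\sigma_n^{(b-1,c)}$ as a positive multiple of $\sigma_{n-1}^{(b-1,c)}$ plus a tail whose coefficients are nonnegative precisely because $b\ge\max\{c,2c-1\}$ (the hypothesis of Theorem \ref{thm:generalized-stable}), propagated by a Ruscheweyh--Salinas duality estimate; (ii) summation by parts against the Vietoris-type monotonicities of $\{B_{n-k}\}$ and $\{(\mu)_k/k!\}$, so as to express the boundary quantity as a nonnegative-kernel average of the trigonometric sums already shown positive in Theorems \ref{thm:cosine-positivity-sigma-bc} and \ref{thm:sine-positivity-sigma-bc}. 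The particular cases one can reach corroborate the conjecture: the endpoint $\rho=\tfrac12$, for which $\mu^{\ast}(\tfrac12,b-1,c)=\mu_0'$, and the sine-counterpart of \eqref{eqn:conj1} as $\rho\to1$ are governed by the same analysis as Theorems \ref{thm:cosine-positivity-sigma-bc} and \ref{thm:sine-positivity-sigma-bc}. The decisive difficulty is the full range $\rho\in(\tfrac12,1)$ for all $n$: the frequency shift $\rho-\tfrac12\ne0$ destroys the even/odd sign pattern on which the classical Vietoris argument depends, and $k\mapsto B_{n-k}$ is not monotone on $0\le k\le n$, so neither route closes without a genuinely new ingredient — which is why the result is stated here only as a conjecture.
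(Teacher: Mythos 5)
This is a conjecture and the paper does not prove it; the partial evidence the paper does supply --- the necessity half $\mu(\rho,b-1,c)\le\mu^{\ast}(\rho,b-1,c)$ obtained from the asymptotic limit of the boundary trigonometric sum at the critical scaling $\phi=(\rho+1)\pi$, and the verification of the case $\rho=1/2$ through Theorems \ref{thm:cosine-positivity-sigma-bc} and \ref{thm:sine-positivity-sigma-bc} --- is precisely what your outline reproduces, and you rightly identify the sufficiency direction for general $\rho$ and all $n$ as the genuinely open part. The only small divergence is that you argue uniqueness of the root of \eqref{eqn:conj1-integral} by a variation-diminishing principle, whereas the paper differentiates under the integral sign and shows $I^{b-1,c}(\mu)$ is strictly increasing by Zygmund's method; either route is serviceable.
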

 Conjecture \ref{conj:1} for the case $\rho=1/2$ is verified in
 Theorem \ref{thm:conj1-rho=1/2}, which justifies validity for the existence of conjecture \ref{conj:1}. Note that the case $\rho=3/4$ and $1/4$ with
 $b=1,c=1$ are addressed in \cite{koumandos-ruscheweyh-2007-conjecture-JAT,
 koumandos-lamprecht-2010-conjecture-JAT}. The authors have provided affirmative answer for the conjecture for several ranges including the one given
 in \cite{koumandos-lamprecht-2010-conjecture-JAT} in a separate work. Conjecture \ref{conj:1} contains the following weaker one.
\begin{conj}\label{conj:2}
Let $\rho\in(0,1]$ and $\mu^{\ast}(\rho,b-1,c)$ be as in Conjecture \ref{conj:1}, then
\begin{align}
\mathrm{Re}\left[(1-z)^{2\rho-1}\sigma_n^{(b-1,c)}(f_{\mu},z)\right]>0, \quad z\in\mathbb{D}, n\in \mathbb{N}
\end{align}
holds for $0<\mu\leq\mu^{\ast}(\rho,b-1,c)$ and $\mu^{\ast}(\rho,b-1,c)$ is the largest number with this
property.
\end{conj}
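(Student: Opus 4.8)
The plan is to treat the conjecture in two layers. As a soft preliminary, note that the inequality in Conjecture \ref{conj:2} is a formal consequence of Conjecture \ref{conj:1}: if $(1-z)^{\rho}\sigma_n^{(b-1,c)}(f_{\mu},z)\prec\left(\frac{1+z}{1-z}\right)^{\rho}$, then the left side maps $\mathbb{D}$ into the sector $\{|\arg w|<\rho\pi/2\}$, while $(1-z)^{\rho-1}$ maps $\mathbb{D}$ into $\{|\arg w|<(1-\rho)\pi/2\}$, so the product $(1-z)^{2\rho-1}\sigma_n^{(b-1,c)}(f_{\mu},z)$ lies in $\{|\arg w|<\pi/2\}$, i.e.\ has positive real part. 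Since Conjecture \ref{conj:1} is itself open, a direct attack is needed, and its first step is a reduction to the boundary. The function $g_n(z):=(1-z)^{2\rho-1}\sigma_n^{(b-1,c)}(f_{\mu},z)$ is analytic in $\mathbb{D}$ with $g_n(0)=1$; its real part is harmonic in $\mathbb{D}$, continuous on $\overline{\mathbb{D}}\setminus\{1\}$, and $\liminf_{z\to1}\mathrm{Re}\,g_n(z)\ge0$ because $\sigma_n^{(b-1,c)}(f_{\mu},1)=\sum_{k=0}^n\frac{B_{n-k}}{B_n}\frac{(\mu)_k}{k!}>0$ while $|\arg(1-z)^{2\rho-1}|\le|2\rho-1|\,\pi/2\le\pi/2$ near $z=1$. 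By the minimum principle it thus suffices to prove nonnegativity on $\partial\mathbb{D}\setminus\{1\}$; using $1-e^{i\theta}=2\sin(\theta/2)\,e^{i(\theta-\pi)/2}$ this is the trigonometric inequality
\begin{align}\label{eqn:Cn-reduction}
C_n(\theta):=\sum_{k=0}^n\frac{B_{n-k}}{B_n}\frac{(\mu)_k}{k!}\cos\!\Big(\big(k+\rho-\tfrac12\big)\theta-\big(\rho-\tfrac12\big)\pi\Big)>0,\qquad \theta\in(0,\pi).
\end{align}

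The next step is the two-scale analysis of \eqref{eqn:Cn-reduction} in the spirit of \cite{koumandos-ruscheweyh-2007-conjecture-JAT,koumandos-lamprecht-2010-conjecture-JAT}. Fix a cut-off $\delta_n$ of order $1/n$. On $[\delta_n,\pi)$ one bounds $C_n$ from below by Abel summation, using the monotonicity of $\{B_{n-k}\}_k$ (which requires $b\ge c$) and of $\{(\mu)_k/k!\}$ for $\mu\le1$; away from the origin the fractional shift $(\rho-\tfrac12)(\theta-\pi)$ is a lower-order perturbation, so the estimates behind the cosine/sine positivity results Theorem \ref{thm:cosine-positivity-sigma-bc} and Theorem \ref{thm:sine-positivity-sigma-bc} of \cite{sangal-swaminathan-positivity-alpha-beta} should adapt. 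On $(0,\delta_n]$ one rescales $\theta=s/n$ and uses $\frac{B_{n-k}}{B_n}$ (which for $k<n$ equals $\frac{(c+n-k)_k}{(b+n-k)_k}$) $\sim(1-\tfrac kn)^{b-c}$ together with $\frac{(\mu)_k}{k!}\sim\frac{k^{\mu-1}}{\Gamma(\mu)}$ and a uniform remainder estimate (Euler--Maclaurin / dominated convergence) to obtain
\begin{align}\label{eqn:Cn-limit}
\frac{\Gamma(\mu)}{n^{\mu}}\,C_n\!\Big(\frac sn\Big)\longrightarrow s^{\mu-1}\psi(s),\qquad \psi(s):=\int_0^1(1-v)^{b-c}v^{\mu-1}\sin(\rho\pi-sv)\,dv,
\end{align}
uniformly for $s$ in compact subsets of $(0,\infty)$, where the connection with \eqref{eqn:conj1-integral} is the substitution identity $\big((\rho+1)\pi\big)^{\mu}\psi\big((\rho+1)\pi\big)=-\int_0^{(\rho+1)\pi}\sin(t-\rho\pi)\,t^{\mu-1}\big(1-\tfrac{t}{(\rho+1)\pi}\big)^{b-c}\,dt$.

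There remains the scalar positivity statement, which is the real heart of the matter: $\psi(s)\ge0$ for every $s>0$ precisely when $0<\mu\le\mu^{\ast}(\rho,b-1,c)$, strictly except for a tangency at $s=(\rho+1)\pi$ when $\mu=\mu^{\ast}$. One has $\psi(0)=\sin(\rho\pi)\,B(\mu,b-c+1)>0$ for $\rho\in(0,1)$, and, isolating the $v=0$ endpoint, $\psi(s)\sim\Gamma(\mu)\,s^{-\mu}\sin\!\big(\rho\pi-\tfrac{\mu\pi}{2}\big)$ as $s\to\infty$, which is positive for $\mu\in(0,1]$, $\rho\in(0,1]$ (with the proviso $\mu<2\rho$ when $\rho<\tfrac12$, expected to follow from the defining equation of $\mu^{\ast}$, and minor case distinctions at the endpoints). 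Hence any sign change of $\psi$ is confined to a bounded $s$-window, inside which $\psi$ is a Lommel/confluent-hypergeometric type function; the task is to show that $\min_{s>0}\psi(s)$, viewed as a function of $\mu$, is positive for $\mu<\mu^{\ast}$, vanishes at $\mu=\mu^{\ast}$, and is attained at $s=(\rho+1)\pi$ --- which is exactly the content of \eqref{eqn:conj1-integral}. The maximality of $\mu^{\ast}$ is the easy direction: for $\mu>\mu^{\ast}$ one has $\psi((\rho+1)\pi)<0$, so by \eqref{eqn:Cn-limit} $C_n\big((\rho+1)\pi/n\big)<0$ for all large $n$, and the conjectured inequality fails.

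The main obstacle I anticipate is not either regime in isolation but the uniform bookkeeping that glues them --- in particular controlling the intermediate range $\delta_n\ll\theta\ll1$ and establishing positivity for small and moderate $n$, where the asymptotic heuristic is vacuous. The sharp constants required here are precisely what leaves even the $b=c=1$ specialization of Conjecture \ref{conj:1} open beyond the values of $\rho$ settled in \cite{koumandos-ruscheweyh-2007-conjecture-JAT,koumandos-lamprecht-2010-conjecture-JAT}. I would therefore expect this plan to yield Conjecture \ref{conj:2} unconditionally only for $\rho=\tfrac12$ --- where the shift vanishes and Theorem \ref{thm:conj1-rho=1/2} applies --- and, in general, for $\mu$ bounded away from $\mu^{\ast}$; the sharp threshold in full generality is the remaining obstruction, in keeping with the statement being posed as a conjecture.
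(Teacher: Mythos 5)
The statement you were asked to prove is posed in the paper as a \emph{conjecture}, and the paper contains no proof of it; your proposal honestly arrives at the same place, so there is no gap to fault you for --- rather, your plan reproduces essentially everything the paper actually establishes about Conjecture \ref{conj:2}. Specifically: (i) your opening observation that Conjecture \ref{conj:1} implies Conjecture \ref{conj:2} by adding the arguments of the two sectors is exactly the paper's derivation of \eqref{eqn:conj11} from \eqref{eqn:conj1}; (ii) your boundary reduction to the trigonometric inequality for $C_n(\theta)$ is, after the identity $\cos\bigl(x-(\rho-\tfrac12)\pi\bigr)=-\sin(x-\rho\pi)$, identical to the paper's \eqref{eqn:prop-eqn1}; (iii) your ``easy direction'' (failure for $\mu>\mu^{\ast}$ via the scaling limit $\theta=s/n$ and the sign of the integral at $s=(\rho+1)\pi$) is precisely the content and method of the paper's Proposition 2.1, which shows $\mu(\rho,b-1,c)\le\mu^{\ast}(\rho,b-1,c)$ using the asymptotic formula \eqref{eqn:integral-asymptotic} and the monotonicity of $I^{b-1,c}(\mu)$; and (iv) the one case you say the plan closes unconditionally, $\rho=\tfrac12$, is exactly Theorem \ref{thm:conj1-rho=1/2}, proved there by the same device (the identity $(1-z^2)\bigl[\sum d_kz^{2k}\bigr]^2=\bigl(\sum c_kz^k\bigr)\bigl(\sum(-1)^kc_kz^k\bigr)$ reducing matters to Theorems \ref{thm:cosine-positivity-sigma-bc} and \ref{thm:sine-positivity-sigma-bc}). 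Two small corrections: your limit formula \eqref{eqn:Cn-limit} carries a spurious factor $s^{\mu-1}$ --- substituting $t=sv$ in the paper's \eqref{eqn:integral-asymptotic} gives $\Gamma(\mu)n^{-\mu}C_n(s/n)\to\psi(s)$ with your $\psi$, not $s^{\mu-1}\psi(s)$ --- and the two-scale positivity argument on $[\delta_n,\pi)$ for general $\rho$ is asserted rather than carried out; but since that is exactly the open part of the conjecture (settled for $b=c=1$ only for special $\rho$ in \cite{koumandos-ruscheweyh-2007-conjecture-JAT,koumandos-lamprecht-2010-conjecture-JAT}), your flagging it as the obstruction is the correct conclusion, not a defect of the proposal.
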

If we take $b=1+\beta$ and $c=1$ then $\sigma_n^{(b-1,;c)}(z)$ reduces to Ces\`aro mean of order $\beta$.
The following figures show graph of $\mu^{\ast}(\rho,\beta,1)$ for
$\beta=0,1,2,3.$
\begin{center}
\includegraphics[width=.4\columnwidth]{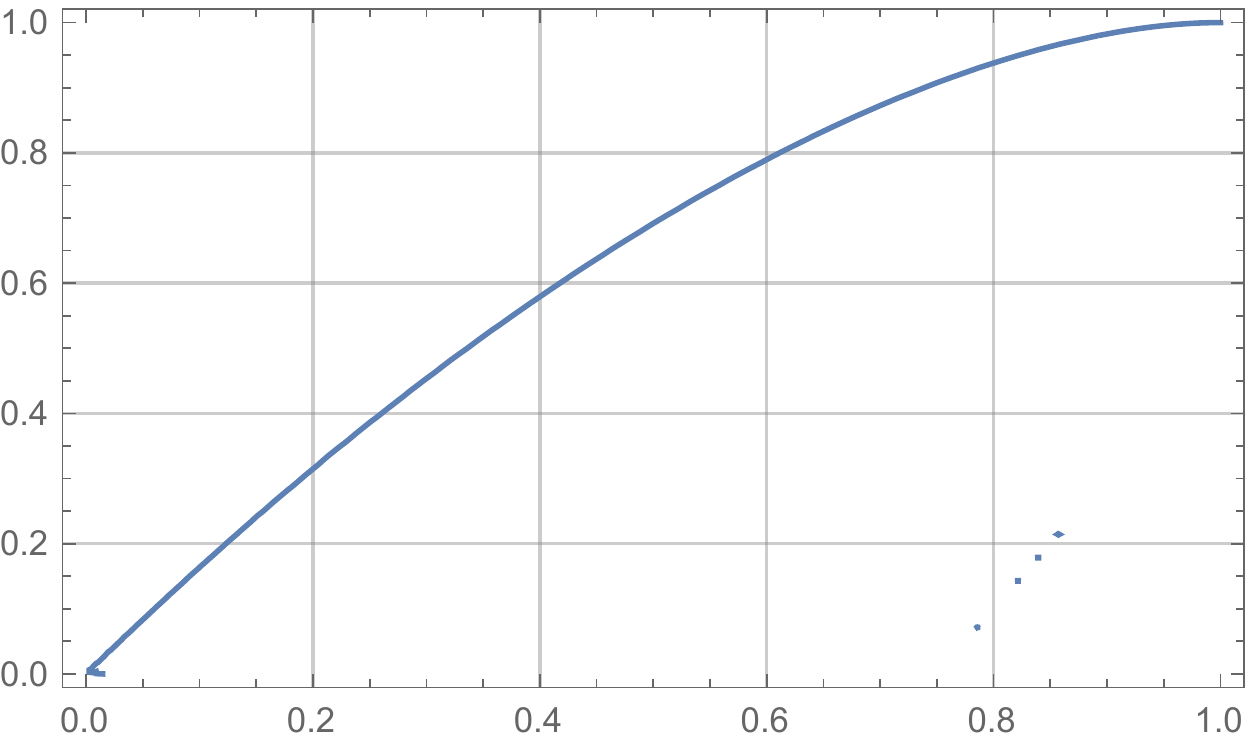}
\includegraphics[width=.4\columnwidth]{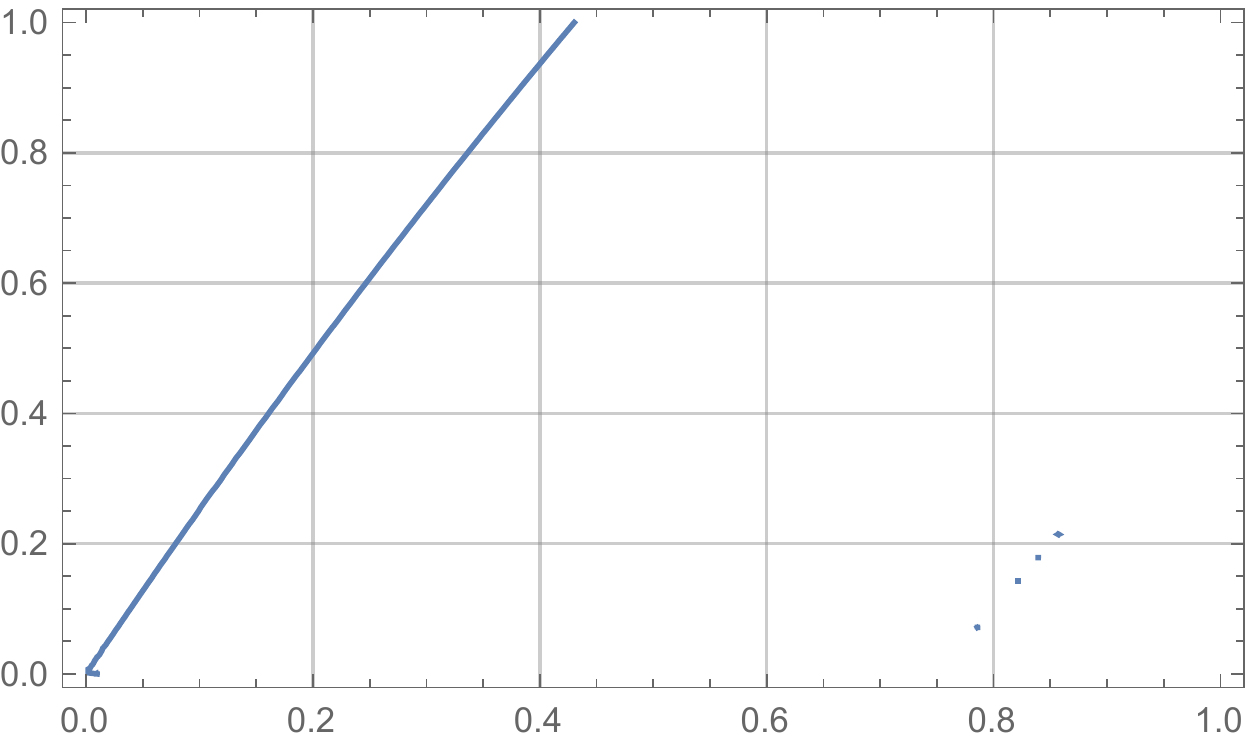}
\includegraphics[width=.4\columnwidth]{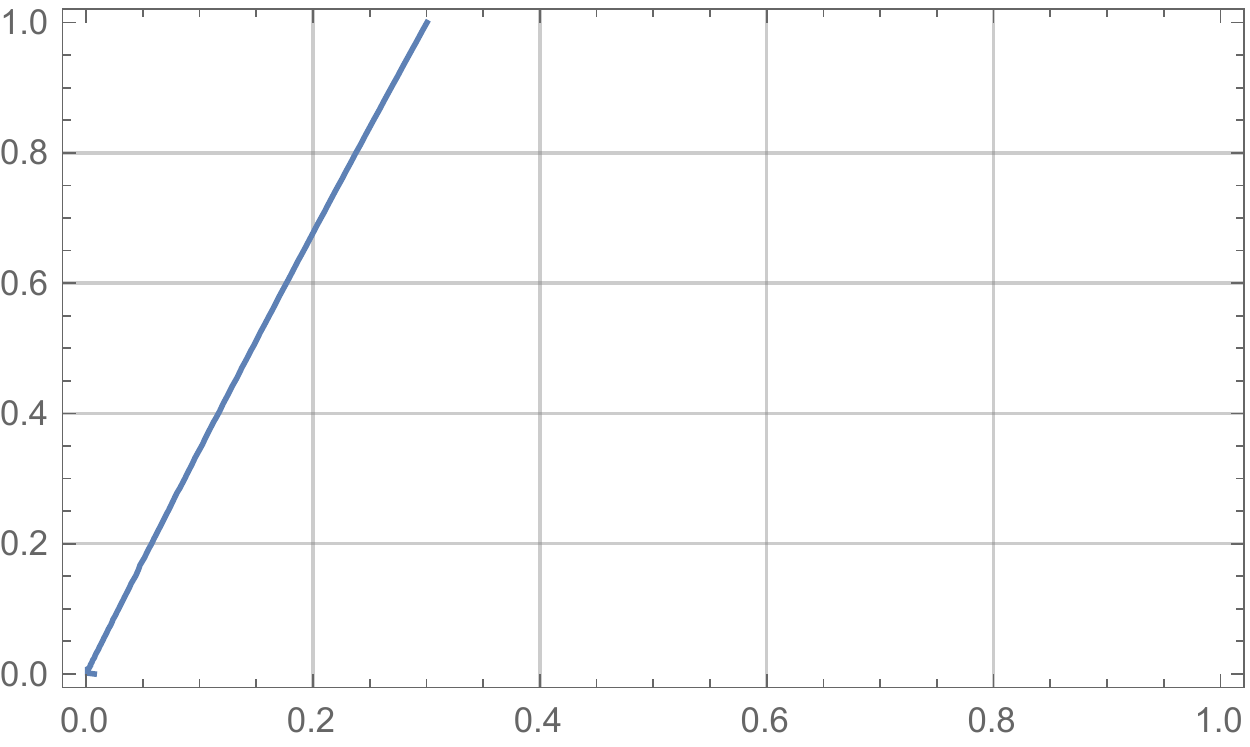}
\includegraphics[width=.4\columnwidth]{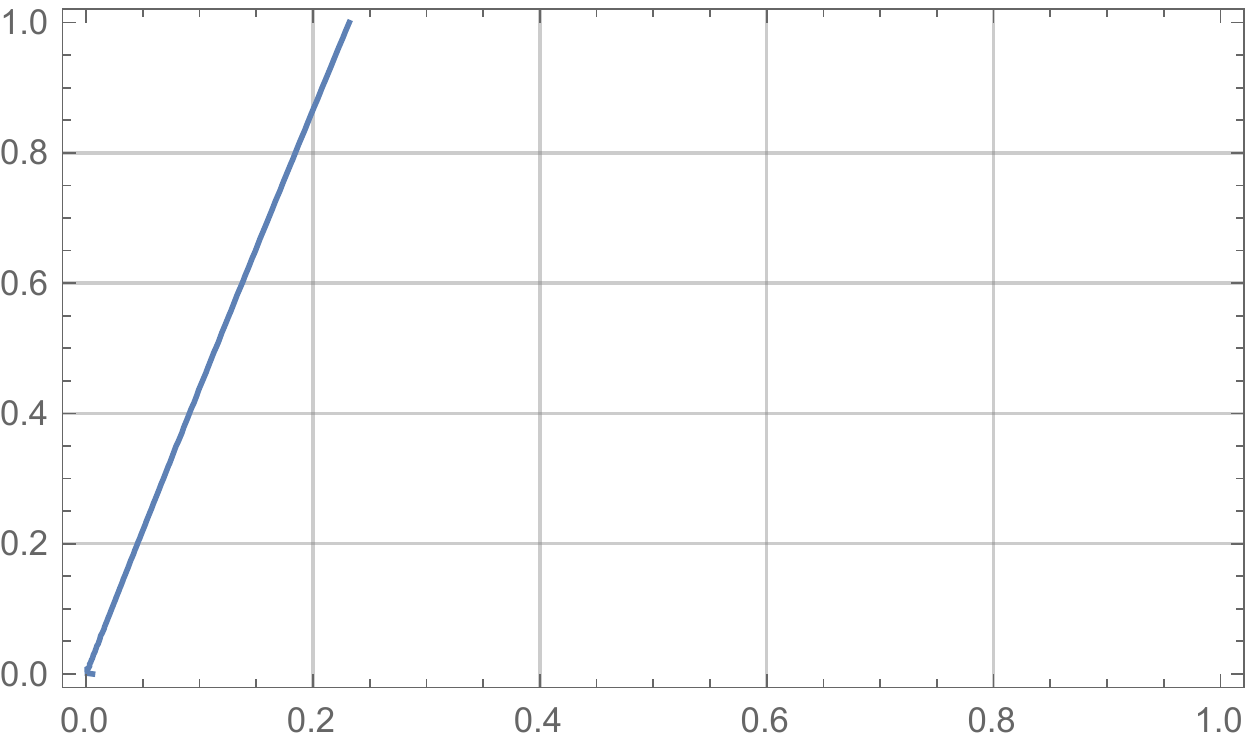}
\end{center}
If $\beta=0$, then the first figure is same as figure of $\mu^{\ast}$ given in
\cite{koumandos-ruscheweyh-2007-conjecture-JAT}.
For $\rho=1$, both conjectures are equivalent and reduces to
\begin{align*}
(1-z)\sigma_n^{(b-1;c)}(f_{\mu},z)\prec \left(\frac{1+z}{1-z}\right)
\end{align*}
which holds for $0<\mu\leq 1$.

For $\mu(\rho,b-1,c)$ and $\mu^{\ast}(\rho,b-1,c)$, we have the
following proposition.
\begin{proposition}
For $0<\rho<1$, we have $\mu^{\ast}(\rho,b-1,c)\geq \mu(\rho,b-1,c)$.
\end{proposition}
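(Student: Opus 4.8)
The plan is to prove the equivalent inequality $\mu(\rho,b-1,c)\le\mu^{\ast}(\rho,b-1,c)$, that is, to show the subordination \eqref{eqn:conj1} must break down once $\mu$ exceeds the root $\mu^{\ast}(\rho,b-1,c)$ of \eqref{eqn:conj1-integral}. The bridge is the unconditional implication noted just before Conjecture \ref{conj:1}: from $(1-z)^{2\rho-1}\sigma_n^{(b-1,c)}(f_\mu,z)=\bigl[(1-z)^{\rho}\sigma_n^{(b-1,c)}(f_\mu,z)\bigr](1-z)^{\rho-1}$ and $\arg (1-z)^{\rho-1}\in\bigl(-(1-\rho)\tfrac{\pi}{2},(1-\rho)\tfrac{\pi}{2}\bigr)$, the subordination \eqref{eqn:conj1} (which puts $(1-z)^{\rho}\sigma_n^{(b-1,c)}(f_\mu,z)$ into the sector $\{|\arg w|<\rho\pi/2\}$) upgrades to \eqref{eqn:conj11}, i.e.\ $\mathrm{Re}\,[(1-z)^{2\rho-1}\sigma_n^{(b-1,c)}(f_\mu,z)]>0$ on $\mathbb{D}$. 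Hence it is enough to show: for every $\mu\in(\mu^{\ast}(\rho,b-1,c),1]$ there exist $n\in\mathbb{N}$ and $z\in\mathbb{D}$ with $\mathrm{Re}\,[(1-z)^{2\rho-1}\sigma_n^{(b-1,c)}(f_\mu,z)]\le 0$.

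First I would record the sign of the integral in \eqref{eqn:conj1-integral}. Since $\sin(t-\rho\pi)<0$ on $(0,\rho\pi)$ and $\sin(t-\rho\pi)>0$ on $(\rho\pi,(\rho+1)\pi)$, while $t\mapsto t^{\eta}$ is increasing for $\eta>0$, the function $\mu\mapsto\int_0^{(\rho+1)\pi}\sin(t-\rho\pi)\,t^{\mu-1}\bigl(1-\tfrac{t}{(\rho+1)\pi}\bigr)^{b-c}dt$ is strictly increasing on $(0,1]$; as $\mu^{\ast}(\rho,b-1,c)$ is its unique zero (the defining property in Conjecture \ref{conj:1}), this integral is strictly positive for $\mu\in(\mu^{\ast}(\rho,b-1,c),1]$.

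The core of the argument is a boundary asymptotic in the spirit of Koumandos--Ruscheweyh. Fix $\mu\in(\mu^{\ast}(\rho,b-1,c),1]$ and put $\theta_n:=(\rho+1)\pi/n$. From $B_m\sim C\,m^{b-c}$ and $(\mu)_k/k!\sim k^{\mu-1}/\Gamma(\mu)$, the expression $\sigma_n^{(b-1,c)}(f_\mu,e^{i\theta_n})=\sum_{k=0}^n\frac{B_{n-k}}{B_n}\frac{(\mu)_k}{k!}e^{ik\theta_n}$ is a Riemann sum, and one expects
\[
n^{-\mu}\,\sigma_n^{(b-1,c)}(f_\mu,e^{i\theta_n})\ \longrightarrow\ \frac{1}{\Gamma(\mu)}\int_0^1(1-s)^{b-c}s^{\mu-1}e^{i(\rho+1)\pi s}\,ds\qquad(n\to\infty).
\]
Multiplying by $(1-e^{i\theta_n})^{2\rho-1}\sim \theta_n^{2\rho-1}e^{i(1-2\rho)\pi/2}$, substituting $u=(\rho+1)\pi s$, and taking real parts (using $\mathrm{Re}\,(e^{i(1-2\rho)\pi/2}e^{iu})=\cos(u+\tfrac{\pi}{2}-\rho\pi)=-\sin(u-\rho\pi)$) turns this into
\[
\mathrm{Re}\,[(1-e^{i\theta_n})^{2\rho-1}\sigma_n^{(b-1,c)}(f_\mu,e^{i\theta_n})]\ \sim\ -\,\frac{((\rho+1)\pi)^{2\rho-1-\mu}\,n^{\mu+1-2\rho}}{\Gamma(\mu)}\int_0^{(\rho+1)\pi}\sin(u-\rho\pi)\,u^{\mu-1}\bigl(1-\tfrac{u}{(\rho+1)\pi}\bigr)^{b-c}du .
\]
By the previous paragraph the integral is positive, so the right-hand side is negative for all large $n$; hence $\mathrm{Re}\,[(1-e^{i\theta_n})^{2\rho-1}\sigma_n^{(b-1,c)}(f_\mu,e^{i\theta_n})]<0$ for large $n$, and since this quantity is continuous in the radial variable at $e^{i\theta_n}\ne 1$, the same strict inequality holds at some $z=re^{i\theta_n}\in\mathbb{D}$. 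Thus \eqref{eqn:conj11}, and a fortiori \eqref{eqn:conj1}, fails for this $\mu$, whence $\mu(\rho,b-1,c)\le\mu^{\ast}(\rho,b-1,c)$.

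The delicate point will be making the Riemann-sum limit rigorous and uniform: one must dominate the terms with $k$ close to $n$ (where $B_{n-k}/B_n$ is no longer comparable to $(1-k/n)^{b-c}$) and control the integrable singularity at $s=0$ coming from $(\mu)_k/k!$, so as to justify exchanging limit and summation. This is the standard technical heart of results of this type; everything else is bookkeeping with the principal arguments of $1-z$.
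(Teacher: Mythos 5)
Your proposal is correct and follows essentially the same route as the paper: both reduce the question to the boundary asymptotics of $n^{-\mu}\sigma_n^{(b-1,c)}(f_\mu,e^{i\phi})$ at $\phi=(\rho+1)\pi$, identify non-positivity of the integral $I^{b-1,c}(\mu)$ in \eqref{eqn:conj1-integral} as a necessary condition for \eqref{eqn:conj11} (hence for \eqref{eqn:conj1}), and then use the fact that this integral is positive for $\mu>\mu^{\ast}(\rho,b-1,c)$. The only cosmetic difference is that you establish the sign of $I^{b-1,c}(\mu)$ by a direct sign-change/weight comparison, whereas the paper differentiates under the integral sign and cites Koumandos--Ruscheweyh and Zygmund; both arguments serve the same purpose, and your explicit acknowledgement of the Riemann-sum justification matches the level of detail the paper itself leaves implicit.
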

\begin{proof}
For $z=e^{i\phi}$, \eqref{eqn:conj11} is equivalent to
\begin{align}\label{eqn:prop-eqn1}
\sum_{k=0}^n \frac{B_{n-k}}{B_n}\frac{(\mu)_k}{k!}
\sin\left[(k+\rho-1/2)\phi-\rho\pi\right]<0,\quad \hbox{ for $0<\phi<2\pi$}
\end{align}
Now limiting case of this inequality can be obtained using the asymptotic formula,
\begin{align}\label{eqn:integral-asymptotic}
\lim_{n\rightarrow \infty} \left(\frac{\phi}{n}\right)^{\mu} \sum_{k=0}^n \frac{B_{n-k}}{B_n}\frac{(\mu)_k}{k!}
\sin\left[\left(k+\rho-1/2\right)\frac{\phi}{n}-\rho\pi\right]\nonumber\\
=\frac{1}{\Gamma(\mu)}\int_0^{\phi} t^{\mu-1}\left(1-\frac{t}{\phi}\right)^{b-c}
\sin(t-\rho\pi)dt
\end{align}
Hence a necessary condition for the validity of \eqref{eqn:prop-eqn1}
is the non positivity of the integral
\eqref{eqn:integral-asymptotic}. In particular, $\phi=(\rho+1)\pi$ gives
\begin{align*}
I^{b-1,c}(\mu)=\int_0^{(\rho+1)\pi} \sin(t-\rho\pi) t^{\mu-1}\left(1-\frac{t}{(\rho+1)\pi}\right)^{b-c}dt.
\end{align*}
We prove that $I^{b-1,c}(\mu)$ is strictly increasing function in $(0,1)$.
Now differentiation under integral sign gives
\begin{align*}
I^{b-1,c}(\mu)'&=\int_0^{(\rho+1)\pi}\sin(t-\rho\pi)\left(1-\frac{t}{(\rho+1)\pi}\right)^{b-c}t^{\mu-1}\log(1/t)dt\\
&= \left(1-\frac{t}{(\rho+1)\pi}\right)^{b-c}\int_0^{(\rho+1)\pi}\frac{\sin(t-\rho\pi)}{t^{1-\mu}}\log(1/t)dt \\
&\quad \quad + (b-c)\int_0^{(\rho+1)\pi}\left(1-\frac{t}{(\rho+1)\pi}\right)^{b-c-1}
\int_0^{(\rho+1)\pi}\frac{\sin(t-\rho\pi)}{t^{1-\mu}}\log(1/t)dt
\end{align*}
The positivity of $I^{b-1,c}(\mu)'$ follows from the increasing property of the integral $I(\mu)$ in
\cite[Lemma 1]{koumandos-ruscheweyh-2007-conjecture-JAT} using the method of
Zygmund \cite[V. 2.29]{zygmund-2002-book-trig-series}.
So $I^{b-1,c}(\mu)$ is strictly increasing in $(0,1)$ and if we choose $b=c$ then $I(0)=-\infty$
and $I(1)>0$, so $I(\mu)=0$ has only one solution in $(0,1]$ which is $\mu^{\ast}(\rho,b-1,c)$ given by \eqref{eqn:conj1-integral}.
Hence the best possible bound for $\mu$ in Conjecture \ref{conj:2} cannot be greater than
$\mu^{\ast}(\rho,b-1,c)$. This proves the assertion.
\end{proof}

Since the conditions in Conjecture \ref{conj:1} and Conjecture \ref{conj:2} turns out to be the
positivity of trigonometric polynomials. So it follows from summation by parts
that both conjectures need to established only for
$\mu=\mu^{\ast}(\rho,b-1,c)$. We discuss some particular cases of these conjectures.

\begin{theorem}\label{thm:conj1-rho=1/2}
Conjecture \ref{conj:1} holds for $\rho=1/2$.
\end{theorem}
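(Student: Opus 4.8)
The plan is to prove the theorem by sandwiching $\mu(1/2,b-1,c)$ between $\mu^{\ast}(1/2,b-1,c)$ from both sides. One inequality, $\mu(1/2,b-1,c)\le\mu^{\ast}(1/2,b-1,c)$, is exactly the preceding Proposition, so the whole of the work is the reverse bound, i.e.\ the validity of \eqref{eqn:conj1} with $\rho=1/2$ for every $n\in\mathbb N$ and every $0<\mu\le\mu^{\ast}(1/2,b-1,c)$. I would begin by making $\mu^{\ast}(1/2,b-1,c)$ explicit: setting $\rho=1/2$ in \eqref{eqn:conj1-integral} and using $\sin(t-\pi/2)=-\cos t$ together with $t/((\rho+1)\pi)=2t/(3\pi)$, the defining equation becomes $\int_0^{3\pi/2}\tfrac{\cos t}{t^{1-\mu}}\bigl(1-\tfrac{2t}{3\pi}\bigr)^{b-c}\,dt=0$, which is precisely the equation determining the constant $\mu_0'$ of Theorem \ref{thm:cosine-positivity-sigma-bc}; hence $\mu^{\ast}(1/2,b-1,c)=\mu_0'$. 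Moreover, as noted before the theorem, the condition \eqref{eqn:conj1} is the positivity of a trigonometric polynomial, so summation by parts reduces matters to the single value $\mu=\mu_0'$.

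The computational core is the identity produced by the substitution $z\mapsto z^2$. Since $c_{2j}=c_{2j+1}=\tfrac{B_{n-j}}{B_n}\tfrac{(\mu)_j}{j!}$ in \eqref{eqn:def-coeff},
\begin{align*}
(1+z)\,\sigma_n^{(b-1,c)}(f_\mu,z^2)=(1+z)\sum_{j=0}^{n}\frac{B_{n-j}}{B_n}\frac{(\mu)_j}{j!}\,z^{2j}=\sum_{k=0}^{2n+1}c_k\,z^k .
\end{align*}
Putting $z=e^{i\theta}$ with $\theta\in(0,\pi)$, the real part $\sum_{k=0}^{2n+1}c_k\cos k\theta$ is positive by Theorem \ref{thm:cosine-positivity-sigma-bc} and the imaginary part $\sum_{k=1}^{2n+1}c_k\sin k\theta$ is positive by the first inequality of Theorem \ref{thm:sine-positivity-sigma-bc} (both hold since $b\ge c$ and $\mu=\mu_0'$), so $\arg\!\bigl(\sum_{k=0}^{2n+1}c_k e^{ik\theta}\bigr)\in(0,\pi/2)$. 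Dividing by $1+e^{i\theta}=2\cos(\theta/2)e^{i\theta/2}$, whose argument equals $\theta/2\in(0,\pi/2)$, yields $\arg\sigma_n^{(b-1,c)}(f_\mu,e^{2i\theta})\in(-\theta/2,\pi/2-\theta/2)$; with $\psi=2\theta$ this says $\arg\sigma_n^{(b-1,c)}(f_\mu,e^{i\psi})\in(-\psi/4,\pi/2-\psi/4)$ for all $\psi\in(0,2\pi)$.

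It remains to upgrade this boundary information to the subordination \eqref{eqn:conj1}. For $\psi\in(0,2\pi)$ the principal branch gives $(1-e^{i\psi})^{1/2}=\sqrt{2\sin(\psi/2)}\,e^{i(\psi/4-\pi/4)}$, so adding arguments,
\begin{align*}
\arg\!\Bigl[(1-e^{i\psi})^{1/2}\sigma_n^{(b-1,c)}(f_\mu,e^{i\psi})\Bigr]\in\Bigl(\tfrac{\psi}{4}-\tfrac{\pi}{4}-\tfrac{\psi}{4},\ \tfrac{\psi}{4}-\tfrac{\pi}{4}+\tfrac{\pi}{2}-\tfrac{\psi}{4}\Bigr)=\Bigl(-\tfrac{\pi}{4},\tfrac{\pi}{4}\Bigr),
\end{align*}
so the boundary values of $g(z):=(1-z)^{1/2}\sigma_n^{(b-1,c)}(f_\mu,z)$ lie in the sector $\{w:|\arg w|<\pi/4\}$, which is the image of $\mathbb D$ under the univalent map $\bigl(\tfrac{1+z}{1-z}\bigr)^{1/2}$. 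Then $g(z)^2=(1-z)\,\sigma_n^{(b-1,c)}(f_\mu,z)^2$ is analytic on $\mathbb D$, continuous on $\overline{\mathbb D}$, equals $1$ at the origin, and has $\mathrm{Re}\,g^2\ge0$ on $\partial\mathbb D$; the minimum principle for harmonic functions then forces $\mathrm{Re}\,g^2>0$ on $\mathbb D$, whence $g(\mathbb D)\subset\{|\arg w|<\pi/4\}\cup\{|\arg w-\pi|<\pi/4\}$. Since $g$ is continuous, $\mathbb D$ is connected and $g(0)=1$ lies in the first of these two disjoint sectors, $g(\mathbb D)$ is contained in $\{|\arg w|<\pi/4\}$, which is exactly $g\prec\bigl(\tfrac{1+z}{1-z}\bigr)^{1/2}$, i.e.\ \eqref{eqn:conj1} for $\rho=1/2$. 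Thus $\mu(1/2,b-1,c)\ge\mu_0'=\mu^{\ast}(1/2,b-1,c)$, and with the preceding Proposition the proof is complete.

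The step I expect to be the real obstacle is discovering the substitution $z\mapsto z^2$ that rewrites $(1+z)\sigma_n^{(b-1,c)}(f_\mu,z^2)$ as the generating polynomial of the Vietoris-type sequence $\{c_k\}$: it is this that makes the integer-frequency cosine and sine sums of Theorems \ref{thm:cosine-positivity-sigma-bc}--\ref{thm:sine-positivity-sigma-bc} applicable to a question which, because of the factors $(1-z)^{1/2}$ and $\bigl(\tfrac{1+z}{1-z}\bigr)^{1/2}$, intrinsically involves half-integer frequencies. Once that is in place, the only remaining delicacy is the passage from boundary arguments to the subordination, where one must stay on the principal branch of $(1-z)^{1/2}$ on $\overline{\mathbb D}$ and use the disconnectedness of $\{w:\mathrm{Re}\,w^2>0\}$; the trigonometric positivity itself is simply quoted from the cited results.
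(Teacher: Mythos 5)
Your proposal is correct and follows essentially the same route as the paper: reduce via the minimum principle to boundary positivity of $\mathrm{Re}\bigl[(1-z)\,\sigma_n^{(b-1,c)}(f_{\mu},z)^2\bigr]$, exploit $c_{2k}=c_{2k+1}$ through the identity $(1\pm z)\sum_k d_k z^{2k}=\sum_k(\pm1)^k c_k z^k$, and invoke Theorems \ref{thm:cosine-positivity-sigma-bc} and \ref{thm:sine-positivity-sigma-bc}. Your argument-tracking presentation (together with the explicit identification $\mu^{\ast}(1/2,b-1,c)=\mu_0'$ and the connectedness step selecting the correct sector) is equivalent to, and somewhat more carefully justified than, the paper's computation of $\mathrm{Re}(P_n(\phi))$ as a sum of two products of positive cosine and sine sums.
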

\begin{proof}
If $\rho=1/2$ then \eqref{eqn:conj1} is equivalent to
\begin{align}\label{eqn:conj1-1/2}
\mathrm{Re}[(1-z)\sigma_n^{b-1,c}(f_{\mu},z)^2]>0
\end{align}
Using minimum principle for harmonic functions it is sufficient to establish
\eqref{eqn:conj1-1/2} for $z=e^{2i\phi}, 0<\phi<\pi$.
Let
\begin{align}
P_n(\phi):=(1-e^{2i\phi})\left\{\sum_{k=0}^n \frac{B_{n-k}}{B_n}\frac{(\mu)_k}{k!}e^{2ik\phi}\right\}^2
\end{align}
and we want to prove $\mathrm{Re}P_n(\phi)>0$ for all $n\in\mathbb{N}$, $0<\phi<\pi.$

For arbitrary number $d_k=c_{2k}=c_{2k+1}$, $k=0,1,2,\ldots,n$, we have
\begin{align*}
(1+z)\sum_{k=0}^nd_kz^{2k}&=\sum_{k=0}^{2n+1}c_k z^k,\\
\hbox{ and }  (1-z)\sum_{k=0}^nd_kz^{2k}&=\sum_{k=0}^{2n+1}(-1)^k c_k z^k,\\
\hbox{ so that } (1-z^2)\left[\sum_{k=0}^nd_kz^{2k}\right]^2&=
\left(\sum_{k=0}^{2n+1}c_k z^k\right)\left(\sum_{k=0}^{2n+1}(-1)^k c_k z^k\right).
\end{align*}
Choosing $z=e^{i\phi}, -z=e^{-i(\pi-\phi)}$ we have
\begin{align*}
(1-e^{2i\phi})\left(\sum_{k=0}^n d_k e^{2ik\phi}\right)^2=
\left(\sum_{k=0}^{2n+1}c_k e^{ik\phi} \right)\left(\sum_{k=0}^{2n+1}(-1)^k c_k e^{ik\phi}\right),
\end{align*}
which implies
\begin{align*}
&\mathrm{Re}(P_n(\phi))\\
&=\left(\sum_{k=0}^{2n+1}c_k\cos{k\phi}\right)
\left(\sum_{k=0}^{2n+1}c_k\cos{k(\pi-\phi)}\right)+\left(\sum_{k=1}^{2n+1}c_k\sin{k\phi}\right)
\left(\sum_{k=1}^{2n+1}c_k\sin{k(\pi-\phi)}\right).
\end{align*}
Since $c_{2k}=c_{2k+1}$, we have
\begin{align*}
\sin{\frac{\phi}{2}}\sum_{k=0}^{2n+1}c_k\cos{k\phi}
=\cos{\frac{\phi}{2}}\sum_{k=1}^{2n+1}c_k\sin{k(\pi-\phi)}.
\end{align*}
This leads to the fact that
\begin{align}\label{eqn:odd-cos-sum}
\sum_{k=0}^{2n+1} c_k \cos k\phi>0, \quad 0<\phi<\pi
\end{align}
and
\begin{align}\label{eqn:odd-sine-sum}
\sum_{k=1}^{2n+1} c_k \sin k\phi>0, \quad 0<\phi<\pi
\end{align}
are equivalent. When $d_k=\frac{B_{n-k}}{B_n}\frac{(\mu)_k}{k!}$ then positivity of \eqref{eqn:odd-cos-sum}
and \eqref{eqn:odd-sine-sum} hold respectively from Theorem \ref{thm:cosine-positivity-sigma-bc} and
Theorem \ref{thm:sine-positivity-sigma-bc} for $0<\mu\leq \mu_0'$ and $0<\phi<\pi$.
So $\mathrm{Re}(P_n(\phi))>0$ which means Conjecture \ref{conj:1} is true for $\rho=1/2$.
\end{proof}

As we have seen that Theorem \ref{thm:conj1-rho=1/2} becomes equivalent to the
extension of Vietori's theorem \cite{sangal-swaminathan-positivity-alpha-beta} an
interpretation of extension of Vietori's theorem in terms of
generalized Ces\`aro stable functions is obtained in section 2.


For further generalization of Theorem \ref{thm:starlike-generalized-stable}, we define for $\mu>0$,
\begin{align*}
\mathcal{F}_{\mu}:=\left\{f\in \mathcal{A}_0: \mathrm Re\left(\frac{zf'}{f}\right)>\frac{-\mu}{2}
,z\in\mathbb{D}\right\},
\end{align*}
and $f_{\mu}=\frac{1}{(1-z)^{\mu}}$ taken as an extremal function for $\mathcal{F}_{\mu}$. For all
$f\in\mathcal{F}_{\mu}$ we get $f\prec f_{\mu}$.
It is obvious that $ f\in\mathcal{F}_{\mu} \Leftrightarrow zf\in\mathcal{S}^{\ast}(1-\mu/2)$.
We define
\begin{align*}
\mathcal{PF}_{\mu}=\{f\in \mathcal{A}_0: f\ast f_{\mu}\in \mathcal{F}_{\mu}\}.
\end{align*}
Clearly $\mathcal{PF}_1=\mathcal{F}_1$. The functions of $\mathcal{F}$ and $\mathcal{PF}$ behaves same as the
functions of starlike and prestarlike classes respectively.
Before going to proceed further we recall some results on starlike and prestarlike class.

\begin{lemma}\cite{ruscheweyh-1977-prestarlike}
For $0<\mu\leq \rho$, we have
\begin{enumerate}
\item $\mathcal{F}_{\mu} \subset \mathcal{F}_{\rho}$
\item $\mathcal{PF}_{\mu} \supset \mathcal{PF}_{\rho}$
\item If $h\in\mathcal{PF}_{\mu}$ and $f\in\mathcal{F}_{\mu}$ then $h\ast f \in\mathcal{F}_{\mu}$.
\end{enumerate}
\end{lemma}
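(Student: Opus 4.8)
The plan is to reduce all three statements to classical facts about starlike and prestarlike functions, by working through the two dictionaries $f\in\mathcal{F}_{\mu}\Longleftrightarrow zf\in\mathcal{S}^{\ast}(1-\mu/2)$ and $f\in\mathcal{PF}_{\mu}\Longleftrightarrow zf\in\mathcal{R}^{\ast}(1-\mu/2)$. The first of these has already been noted; for the second I would use the elementary identity $(zf)\ast(zg)=z(f\ast g)$ together with $zf_{\mu}(z)=z/(1-z)^{\mu}=k_{1-\mu/2}(z)$, so that $f\ast f_{\mu}\in\mathcal{F}_{\mu}$ amounts to $(zf)\ast k_{1-\mu/2}\in\mathcal{S}^{\ast}(1-\mu/2)$, i.e. $zf\in\mathcal{R}^{\ast}(1-\mu/2)$. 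Setting $\gamma:=1-\mu/2$ and $\gamma':=1-\rho/2$, the hypothesis $0<\mu\le\rho$ translates to $0\le\gamma'\le\gamma\le 1$, which is exactly the regime where the relevant monotonicity holds.

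For part (1) I would argue directly from the definition: since $\mu\le\rho$, the inequality $\mathrm{Re}(zf'/f)>-\mu/2$ already forces $\mathrm{Re}(zf'/f)>-\rho/2$, which is the inclusion $\mathcal{S}^{\ast}(\gamma)\subset\mathcal{S}^{\ast}(\gamma')$. For part (2), after the translation above, the claim $\mathcal{PF}_{\mu}\supset\mathcal{PF}_{\rho}$ becomes $\mathcal{R}^{\ast}(\gamma')\subset\mathcal{R}^{\ast}(\gamma)$ for $\gamma'\le\gamma\le 1$, which is the classical monotonicity of the prestarlike classes \cite{ruscheweyh-1977-prestarlike} (see also \cite{ruscheweyh-1982-book}); I would invoke that result, noting that if a self-contained argument is preferred it can be produced by the same convolution device used for part (3).

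The only part with genuine content is (3), which I would obtain as a direct application of Lemma \ref{lemma:ruscheweyh-prestarlike}. Put $\gamma:=1-\mu/2$, $F:=zh$ and $G:=zf$; then $F$ is prestarlike of order $\gamma$, $G\in\mathcal{S}^{\ast}(\gamma)$, and $z(h\ast f)=F\ast G$ by the convolution identity. Choose $H:=zG'/G$, which is analytic in $\mathbb{D}$ and, because $G\in\mathcal{S}^{\ast}(\gamma)$, maps $\mathbb{D}$ into $\{w:\mathrm{Re}\,w>\gamma\}$; as this set is contained in the closed convex set $\{w:\mathrm{Re}\,w\ge\gamma\}$, one gets $\overline{co}(H(\mathbb{D}))\subset\{w:\mathrm{Re}\,w\ge\gamma\}$. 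Since $GH=zG'$, we have $F\ast(GH)=F\ast(zG')=z(F\ast G)'$, so Lemma \ref{lemma:ruscheweyh-prestarlike} yields
\begin{align*}
\mathrm{Re}\,\frac{z(F\ast G)'}{F\ast G}\ge\gamma,\qquad z\in\mathbb{D}.
\end{align*}
As $F\ast G\in\mathcal{A}_0$ and the displayed quotient equals $1$ at $z=0$, it is either identically $1>\gamma$ or non-constant, and in the non-constant case the open mapping theorem improves the estimate to strict inequality. Thus $F\ast G=z(h\ast f)\in\mathcal{S}^{\ast}(\gamma)$, which by the dictionary means $h\ast f\in\mathcal{F}_{\mu}$.

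I do not anticipate a serious difficulty: once the dictionary is in place, parts (1) and (2) are textbook monotonicity statements and part (3) is a one-line consequence of Lemma \ref{lemma:ruscheweyh-prestarlike}. The points that need care are purely bookkeeping ones: verifying $(zf)\ast(zg)=z(f\ast g)$ and $zf_{\mu}=k_{1-\mu/2}$ so that the correspondence $\mathcal{PF}_{\mu}\leftrightarrow\mathcal{R}^{\ast}(1-\mu/2)$ is exact, checking $GH=zG'$, and passing from $\mathrm{Re}\ge\gamma$ to $\mathrm{Re}>\gamma$ in part (3).
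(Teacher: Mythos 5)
The paper offers no proof of this lemma at all: it is imported verbatim from \cite{ruscheweyh-1977-prestarlike}, so there is no argument of the authors' to compare yours against; the only question is whether your reconstruction is sound, and it is. Your dictionary is exactly the right one: $(zf)\ast(zg)=z(f\ast g)$ together with $zf_{\mu}=k_{1-\mu/2}$ identifies $\mathcal{F}_{\mu}$ with $\mathcal{S}^{\ast}(1-\mu/2)$ and $\mathcal{PF}_{\mu}$ with $\mathcal{R}^{\ast}(1-\mu/2)$, whereupon (1) is immediate from the defining inequality, (2) is the classical monotonicity $\mathcal{R}^{\ast}(\alpha)\subset\mathcal{R}^{\ast}(\beta)$ for $\alpha\leq\beta\leq1$, and (3) is the prestarlike convolution theorem; your derivation of (3) from Lemma \ref{lemma:ruscheweyh-prestarlike} with $H=zG'/G$, $F\ast(GH)=z(F\ast G)'$, and the minimum-principle upgrade from $\mathrm{Re}\geq\gamma$ to $\mathrm{Re}>\gamma$ is the standard and correct argument. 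Two small remarks. First, part (2) is the one place where you lean on an external classical fact rather than proving it, and that fact comes from the very reference the lemma cites, so your write-up is not actually more self-contained than the paper's citation on that point (this is acceptable, just worth being aware of). Second, the paper's definition $\mathcal{F}_{\mu}\subset\mathcal{A}_{0}$ is evidently a slip for $\mathcal{A}_{1}$ (otherwise $zf$ has a double zero at the origin and cannot be starlike), and your dictionary silently presupposes the corrected normalization; likewise the translation needs $1-\mu/2\geq 0$, i.e.\ $\mu\leq 2$, which holds in the paper's range $0<\mu\leq\rho\leq 1$ but should be said if the lemma is read with no upper bound on $\rho$.
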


Lemma \ref{lemma:ruscheweyh-prestarlike} also holds good in context with the class $\mathcal{F}_{\mu}$ and $\mathcal{PF}_{\mu}$. We need the following lemma.

We define $\tilde{f}_{\mu}\in\mathcal{A}_0$ be the unique solution of $f_{\mu}\ast\tilde{f}_{\mu}=\frac{1}{1-z}$.
It is clear that $f\in\mathcal{F}_{\mu} \Longleftrightarrow f\ast \tilde{f}_{\mu}\in  \mathcal{PF}_{\mu}$.

\begin{theorem}\label{thm:extension-starlike-stable-hypergeometric}
Let $\rho\in(0,1]$ and $f\in\mathcal{S}^{\ast}(1-\mu/2)$ with $0<\mu\leq \rho$, then for
$b\geq\max\{c,2c-1\}>0$,
\begin{align}\label{eqn:extension-starlike-stable-hypergeomertic}
\frac{\sigma_n^{(b-1,c)}(f,z)}{\phi_{\rho,\mu}\ast f} \prec (1-z)^{\rho},\quad \hbox{ $n\in\mathbb{N}$},
\end{align}
where $\phi_{\rho,\mu}(z)=zF(1,\rho;\mu;z)$ , where $F$ is the Gaussian hypergeometric function can
also be defined by the equation,
\begin{align*}
\frac{z}{(1-z)^{\mu}}\ast \phi_{\rho,\mu}=\frac{z}{(1-z)^{\rho}}.
\end{align*}
\end{theorem}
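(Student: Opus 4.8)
The plan is to build on Theorem~\ref{thm:starlike-generalized-stable}: I would split the quotient in \eqref{eqn:extension-starlike-stable-hypergeomertic} multiplicatively and estimate the two factors separately. Put $\lambda=1-\mu/2$, so that $\lambda\in[\frac{1}{2},1)$ and $f\in\mathcal{S}^{\ast}(\lambda)$, and write $k_{\lambda}(z)=\frac{z}{(1-z)^{2-2\lambda}}=\frac{z}{(1-z)^{\mu}}$ and $\ell(z)=\frac{z}{(1-z)^{\rho}}$ for the extremal starlike functions of $\mathcal{S}^{\ast}(\lambda)$ and $\mathcal{S}^{\ast}(1-\rho/2)$. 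Reading $\sigma_n^{(b-1,c)}(f,z)=z\,\sigma_n^{(b-1,c)}(f/z,z)$ as in Theorem~\ref{thm:starlike-generalized-stable} and using that $f$ is univalent, write
\begin{align*}
\frac{\sigma_n^{(b-1,c)}(f,z)}{\phi_{\rho,\mu}\ast f}=\frac{z\,\sigma_n^{(b-1,c)}(f/z,z)}{f}\cdot\frac{f}{\phi_{\rho,\mu}\ast f}.
\end{align*}
By Theorem~\ref{thm:starlike-generalized-stable} the first factor is subordinate to $(1-z)^{2-2\lambda}=(1-z)^{\mu}$. Consequently, by Lemma~\ref{lemma:koumandos-ruscheweyh-2007-proposition} applied with the exponents $\mu>0$ and $\rho-\mu>0$, the theorem follows once one establishes
\begin{align*}
\frac{f}{\phi_{\rho,\mu}\ast f}\prec(1-z)^{\rho-\mu},\qquad z\in\mathbb{D}
\end{align*}
(if $\mu=\rho$ then $\phi_{\rho,\rho}(z)=z/(1-z)$, so $\phi_{\rho,\rho}\ast f=f$ and the theorem is exactly Theorem~\ref{thm:starlike-generalized-stable}).

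To obtain this subordination I would argue by duality, as in the proof of Theorem~\ref{thm:starlike-generalized-stable}. Since $f\in\mathcal{S}^{\ast}(\lambda)$, there is a unique prestarlike function $F$ of order $\lambda=1-\mu/2$ with $f=k_{\lambda}\ast F$. The function $\phi_{\rho,\mu}$ is defined by $\frac{z}{(1-z)^{\mu}}\ast\phi_{\rho,\mu}=\frac{z}{(1-z)^{\rho}}$, i.e., $k_{\lambda}\ast\phi_{\rho,\mu}=\ell$; hence $\phi_{\rho,\mu}\ast f=\phi_{\rho,\mu}\ast k_{\lambda}\ast F=\ell\ast F$, and using the pointwise identity $\ell(z)=k_{\lambda}(z)(1-z)^{\mu-\rho}$ we get
\begin{align*}
\frac{\phi_{\rho,\mu}\ast f}{f}=\frac{F\ast\ell}{F\ast k_{\lambda}}=\frac{F\ast\bigl(k_{\lambda}\cdot(1-z)^{\mu-\rho}\bigr)}{F\ast k_{\lambda}}.
\end{align*}
Since $F$ is prestarlike of order $\lambda$ and $k_{\lambda}\in\mathcal{S}^{\ast}(\lambda)$, Lemma~\ref{lemma:ruscheweyh-prestarlike} with $H(z)=(1-z)^{\mu-\rho}$ yields
\begin{align*}
\frac{\phi_{\rho,\mu}\ast f}{f}(\mathbb{D})\subset\overline{co}\bigl((1-z)^{\mu-\rho}(\mathbb{D})\bigr).
\end{align*}
A direct computation gives $1+\frac{zg''(z)}{g'(z)}=\frac{1+(\rho-\mu)z}{1-z}$ for $g(z)=(1-z)^{\mu-\rho}$, and since $0<\rho-\mu<1$ the right side maps $\mathbb{D}$ into the half-plane $\{\mathrm{Re}\,w>\frac{1-(\rho-\mu)}{2}\}\subset\{\mathrm{Re}\,w>0\}$. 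Hence $(1-z)^{\mu-\rho}$ is convex univalent, so $\overline{co}((1-z)^{\mu-\rho}(\mathbb{D}))=\overline{(1-z)^{\mu-\rho}(\mathbb{D})}$ and $\frac{\phi_{\rho,\mu}\ast f}{f}\prec(1-z)^{\mu-\rho}$. Since $(1-z)^{\rho-\mu}$ is univalent, taking reciprocals gives $\frac{f}{\phi_{\rho,\mu}\ast f}\prec(1-z)^{\rho-\mu}$, as needed.

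The heart of the proof is this second factor. What makes it work is the defining property of $\phi_{\rho,\mu}$: convolution by it carries the extremal $k_{\lambda}$ of $\mathcal{S}^{\ast}(\lambda)$ onto the extremal $\ell$ of $\mathcal{S}^{\ast}(1-\rho/2)$, and the algebraic identity $\ell=k_{\lambda}\cdot(1-z)^{\mu-\rho}$ then allows Lemma~\ref{lemma:ruscheweyh-prestarlike} to be invoked at the order $\lambda=1-\mu/2$ already carried by the prestarlike factor $F$ of $f$ --- so one never has to compare prestarlike classes of different orders, which would otherwise be the natural obstacle. The remaining points to verify are routine: the convexity of each of $(1-z)^{\mu-\rho}$, $(1-z)^{\mu}$ and $(1-z)^{\rho}$ as a map of $\mathbb{D}$ (so that the closed convex hulls of their images coincide with their closures), and the normalization convention $\sigma_n^{(b-1,c)}(f,z)=z\,\sigma_n^{(b-1,c)}(f/z,z)$ for $f$ with $f(0)=0$. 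The hypothesis $b\ge\max\{c,2c-1\}>0$ enters only where Theorem~\ref{thm:starlike-generalized-stable} is applied. (An alternative route would decompose $f$ first and reduce the whole quotient directly to the Corollary of Theorem~\ref{thm:generalized-stable} via the $\mathcal{F}_{\mu}$--$\mathcal{PF}_{\mu}$ form of Lemma~\ref{lemma:ruscheweyh-prestarlike}, but the multiplicative splitting above is cleaner and sidesteps the order-matching issue.)
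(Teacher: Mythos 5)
Your proof is correct and follows essentially the same route as the paper: both split the quotient as $\frac{\sigma_n^{(b-1,c)}(f,z)}{f}\cdot\frac{f}{\phi_{\rho,\mu}\ast f}$, establish $\frac{\phi_{\rho,\mu}\ast f}{f}\prec(1-z)^{\mu-\rho}$ via Lemma \ref{lemma:ruscheweyh-prestarlike} applied to the prestarlike factor of $f$ together with the pointwise factorization $\frac{z}{(1-z)^{\rho}}=\frac{z}{(1-z)^{\mu}}(1-z)^{\mu-\rho}$, and then combine the two subordinations with Lemma \ref{lemma:koumandos-ruscheweyh-2007-proposition}. The only difference is bookkeeping --- the paper phrases the duality step in the $\mathcal{F}_{\mu}$/$\mathcal{PF}_{\mu}$ normalization with the convolution inverse $\tilde{f}_{\mu}$, while you work directly with $\mathcal{S}^{\ast}(1-\mu/2)$ and its prestarlike decomposition --- and your write-up is in fact slightly more careful about the boundary case $\mu=\rho$ and the reciprocal/convex-hull steps.
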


\begin{proof}
Let $\phi_{\rho,\mu}(z)=\displaystyle\sum_{k=0}^{\infty}\frac{(\rho)_k}{(\mu)k}z^k=f_{\rho}\ast \tilde{f}_{\mu}$
where $\tilde{f}_{\mu}$ is defined as $f_{\mu}\ast\tilde{f}_{\mu}=\frac{1}{1-z}$.
For $0<\mu<\rho\leq 1$, $f_{\rho-\mu}=\frac{1}{(1-z)^{\rho-\mu}}$ maps $\mathbb{D}$ univalently into a convex domain. $f\in\mathcal{F}_{\mu} \Rightarrow f\ast\tilde{f}_{\mu}\in \mathcal{PF}_{\mu}$ and $f_{\mu}\in\mathcal{F}_{\mu}$.
Clearly,
\begin{align*}
\frac{\phi_{\rho,\mu}\ast f}{f}
&=\frac{f_{\rho}\ast \tilde{f}_{\mu}\ast f}{f_{\mu}\ast\tilde{f}_{\mu}\ast f}
=\frac{f\ast \tilde{f}_{\mu}\ast f_{\mu}f_{\rho-\mu}}{f\ast\tilde{f}_{\mu}\ast f_{\mu}}
\in co\left(f_{\rho-\mu}(\mathbb{D}) \right),
\end{align*}
i.e. $\frac{\phi_{\rho,\mu}\ast f}{f}  \prec \frac{1}{(1-z)^{\rho-\mu}}$.
Since $f \in \mathcal{F}_{\mu}  \Rightarrow \frac{\sigma_n^{(b-1,c)}(f,z)}{f}\prec (1-z)^{\mu}$.
So using Lemma \ref{lemma:koumandos-ruscheweyh-2007-proposition},
\begin{align*}
\frac{\sigma_n^{(b-1,c)}(f,z)}{\phi_{\rho,\mu}\ast f}\prec (1-z)^{\rho}.
\end{align*}
If we take $zf\in\mathcal{S}^{\ast}(1-\mu/2)$ we get that,
\begin{equation*}
\frac{\sigma_n^{(b-1,c)}(f,z)}{\phi_{\rho,\mu}\ast f}\prec (1-z)^{\rho}.\mbox{ \qedhere}
\end{equation*}
\end{proof}

\begin{remark}
If we take $\rho=\mu=2-2\lambda$, then \eqref{eqn:extension-starlike-stable-hypergeomertic}
becomes \eqref{eqn:starlike-generalized-stable}. This means Theorem
\ref{thm:extension-starlike-stable-hypergeometric} can be regarded as a
generalization of Theorem \ref{thm:starlike-generalized-stable}.
\end{remark}

\section{Matrix Representation}
Ces\`aro mean of type $(b-1;c)$ can be written in terms of lower triangular matrix $(g_{ij})$
defined as,
\begin{align*}
g_{i0}=1, \quad \quad
g_{ik}=\left\{
         \begin{array}{ll}
           \frac{B_{i-k}}{B_i}, & \hbox{$1\leq k\leq i$;} \\
           0, & \hbox{$k\geq i+1$.}
         \end{array}
       \right.
\end{align*}
Then the entries in $(n+1)th$ row of the matrix induces Ces\`aro mean of type $(b-1;c)$ of
order $n$ is given by,
\begin{align*}
\sigma_n^{(b-1,c)}(z)=\sum_{k=0}^{n}\frac{B_{n-k}}{B_n}z^k, \quad z\in\mathbb{D}.
\end{align*}
Consider,
\begin{align*}
        G=\left(
                          \begin{array}{cccccc}
                            1 & 0 & 0 & 0 & \cdots & 0 \\
                            1 & \frac{B_0}{B_1} & 0 & 0 & \cdots & 0 \\
                            1 & \frac{B_1}{B_2} & \frac{B_0}{B_2} & 0 & \cdots & 0 \\
                            1 & \frac{B_2}{B_3} & \frac{B_1}{B_3} & \frac{b_0}{b_3} & \cdots & 0 \\
                           \vdots & \vdots & \vdots & \vdots & \cdots & \vdots \\
                          \end{array}
                        \right)
\times
\left(
\begin{array}{c}
a_0\\
a_1z\\
a_2z^2\\
a_3z^3\\
\vdots
\end{array}
\right)
\end{align*}
Then $(n+1)th$ row of G generates the Ces\`aro mean of type $(b-1,c)$ of
$f(z)=\displaystyle\sum_{k=0}^{\infty}a_kz^k$ of order $n$ for $n\geq0$.
Then the concept of stable function can be generalized in terms of lower triangular matrix as well.

For $n\in\mathbb{N}$, $\mathcal{H}_n$ be the set of lower triangular matrix $(h_{ij})$ of order $(n+1)$
satisfying $h_{ij}\geq0,i,j=0,1,2\ldots,n$, and satisfy the following conditions:
\begin{enumerate}
\item $h_{i0}=1$ for every $i=0,1,\ldots,n$,
\item for each fixed $i\geq1$, $h_{ij}=h_{i1}h_{i-1,j-1}$, $j=1,\ldots,n$,
\item for each fixed $i\geq1$, $\{h_{ij}\}$ is a decreasing sequence.
\end{enumerate}
Then $(n+1)th$ row of $H_n$ induces a polynomial $H_n$ of degree n is
\begin{align*}
H_n(z):=\sum_{k=0}^n h_{nk}z^k,
\end{align*}
and for $f(z)=\displaystyle \sum_{k=0}^{\infty}a_kz^k\in \mathcal{A}_1$ the polynomial
\begin{align}\label{eqn:poly-hn}
H_n(f,z)=\sum_{k=0}^n h_{nk}a_kz^k=H_n(z)\ast f(z).
\end{align}
Following the same procedure as in Theorem \ref{thm:generalized-stable} we can obtain the following theorem
for $H_n$ defined by lower triangular matrix. We state the result without proof.
\begin{theorem}
Let $H_n$ be given by \eqref{eqn:poly-hn}, and $f_{\mu}=1/(1-z)^{\mu}$. Suppose
$h_{n1}\leq 1$, then for $\mu\in[-1,1]$,
\begin{align}
(1-z)^{\mu} H_n(f_{\mu},z)\prec (1-z)^{\mu}.
\end{align}
\end{theorem}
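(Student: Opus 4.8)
The plan is to mimic, essentially verbatim, the argument used to prove Theorem \ref{thm:generalized-stable}, since the matrix conditions (1)--(3) are precisely the abstract features of the sequence $\{c_k\}$ that made that proof work. Write $h_{n1}=:\lambda\le 1$. Condition (2) forces $h_{nk}=\lambda^{?}$-type products; more precisely iterating gives $h_{nk}=h_{n1}h_{n-1,1}\cdots h_{n-k+1,1}\cdot h_{n-k,0}=\prod_{j=0}^{k-1}h_{n-j,1}$, so each row is a ``triangular'' rescaling in the same spirit as $B_{n-k}/B_n$. The point of condition (3) (each row decreasing) together with $h_{i0}=1$ is exactly that $0\le h_{nk}\le 1$ for all $k$, which is the only quantitative input the Ruscheweyh--Salinas technique needs.

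First I would set, exactly as in the proof of Theorem \ref{thm:generalized-stable},
\[
g(z):=1-(1-z)\,H_n(f_\mu,z)^{1/\mu},
\]
(with the obvious reading $g\equiv 0$ when $\mu=0$, since then $f_\mu\equiv 1$ and $H_n(f_\mu,z)=1$), and reduce the subordination $(1-z)^\mu H_n(f_\mu,z)\prec(1-z)^\mu$ to the estimate $|g(z)|\le 1$ on $\mathbb D$. Differentiating,
\[
g'(z)=\bigl[H_n(f_\mu,z)\bigr]^{\frac1\mu-1}\left[H_n(f_\mu,z)-\frac{1-z}{\mu}H_n(f_\mu,z)'\right].
\]
For $0<\mu\le 1$ the Taylor coefficients $(\mu)_k/k!$ of $f_\mu$ are nonnegative and, since $0\le h_{nk}\le 1$, so are the coefficients of $H_n(f_\mu,z)=\sum_{k=0}^n h_{nk}\frac{(\mu)_k}{k!}z^k$; hence $|H_n(f_\mu,z)|\le H_n(f_\mu,|z|)$. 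The key computation is to check that the bracketed factor $H_n(f_\mu,z)-\frac{1-z}{\mu}H_n(f_\mu,z)'$ has nonnegative Taylor coefficients: writing it as $\sum_k c_k z^k$, a direct coefficient comparison (using $a_k=(\mu)_k/k!$ and $\frac{(\mu)_k}{k!}+\frac{k(\mu)_k}{\mu k!}=\frac{(\mu+1)_k}{k!}$, exactly as on the displayed lines following \eqref{eqn:22}) gives $c_k$ proportional to $(h_{nk}-h_{n,k+1})\frac{(\mu+1)_k}{k!}$ for $k<n$ and to $h_{nn}\frac{(\mu+1)_n}{n!}$ for $k=n$, all of which are $\ge 0$ precisely because condition (3) says $h_{nk}\ge h_{n,k+1}$. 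Therefore $g'$ has nonnegative coefficients, $g(0)=0$, $g(1)=1$ (the latter because $(1-z)H_n(f_\mu,z)^{1/\mu}\to 0$ as $z\to 1^-$ along the real axis, $H_n$ being a polynomial), and then
\[
|g(z)|=\left|\int_0^z g'(t)\,dt\right|\le\int_0^1|g'(tz)|\,dt\le\int_0^1 g'(t)\,dt=g(1)=1 .
\]
For $-1\le\mu<0$, one argues as in the second half of the proof of Theorem \ref{thm:generalized-stable}: the coefficients of $(1-z)^{-\mu}$ are $\le 0$ beyond the constant term, so $H_n(f_\mu,z)=1-b(z)$ with $b$ having nonnegative coefficients, whence $H_n(f_\mu,z)^{\frac1\mu-1}=(1-b(z))^{\frac1\mu-1}=\sum_{k\ge 0}\frac{(1-\frac1\mu)_k}{k!}b(z)^k$ has nonnegative coefficients (note $\frac1\mu-1<0$ so $1-\frac1\mu>0$), and the rest goes through unchanged.

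The main obstacle, such as it is, is purely bookkeeping: verifying that the coefficient of $z^k$ in $H_n(f_\mu,z)-\frac{1-z}{\mu}H_n(f_\mu,z)'$ collapses to a nonnegative multiple of $h_{nk}-h_{n,k+1}$ (with the endpoint term $h_{nn}$), i.e.\ reproducing the telescoping identity of Theorem \ref{thm:generalized-stable} in the abstract matrix setting. Once that is in place, everything else is a direct transcription of the Ruscheweyh--Salinas integration argument, and conditions (1)--(3) are used exactly where $h_{n0}=1$, $0\le h_{nk}\le 1$, and monotonicity of each row are needed. Hence the proof is omitted, as stated.
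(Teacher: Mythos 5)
The paper states this theorem without proof, saying only that it follows the same procedure as Theorem \ref{thm:generalized-stable}; your reconstruction carries out exactly that procedure, and the key telescoping identity --- the coefficient of $z^k$ in $H_n(f_\mu,z)-\tfrac{1-z}{\mu}H_n(f_\mu,z)'$ equals $(h_{nk}-h_{n,k+1})\tfrac{(\mu+1)_k}{k!}$ for $k<n$ and $h_{nn}\tfrac{(\mu+1)_n}{n!}$ for $k=n$ --- checks out, with nonnegativity supplied by row-monotonicity and $h_{n0}=1$. This is essentially the intended argument (your direct coefficient comparison even bypasses the row recursion \eqref{eqn:definition-Sn-in-n-1} that the paper routes through in Theorem \ref{thm:generalized-stable}, and correctly observes that condition (2) is not really needed), so the proposal is correct.
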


\section{Application in geometric properties of Ces\`aro mean of type $(b-1;c)$}
For finding the geometric properties of Ces\`aro mean of type $(b-1;c)$,
instead of $\sigma_n^{b-1;c}(z)$ we will use
normalized Ces\`aro mean of type $(b-1;c)$ denoted by $s_n^{b-1;c}(z)$ because the geometric properties
like convexity, starlikeness and close-to-convexity remains intact under such normalization.
For $b+1>c>0$, let
\begin{align*}
s_n^{(b-1,c)}(z):=z+\sum_{k=2}^n \frac{B_{n-k}}{B_{n}}z^k, \quad z\in\mathbb{D}.
\end{align*}
For $f\in\mathcal{A}$, it is easy to obtain that
\begin{align*}
\left(\frac{b+n-1}{c+n-1}\right)s_n^{(b-1,c)}(f,z)'=\sigma_{n-1}^{(b-1,c)}(f',z)
=\sigma_{n-1}^{(b-1,c)}(z)\ast f'(z).
\end{align*}

Note that $s_n^{\beta;1}=s_n^{\beta}(z)$ was defined in \cite{ruscheweyh-1992-geom-cesaro}.
Among the results available in the literature regarding $s_n^{\beta}(z)$, the
interesting result is given by Lewis \cite{lewis-1979-closetoconvexity-cesaro-mean-SIAM}
is that for $\beta\geq 1$ and $n\in\mathbb{N}$, $s_n^{\beta}(z)\in\mathcal{K}$.
Using the convolution between convex and close-to-convex functions, it is clear that for
$f\in\mathcal{C}$, $(n+\beta)s_n^{\beta}(f,z)/n\in \mathcal{K}, \beta\geq1$.
Ruscheweyh and Salinas \cite{ruscheweyh-salinas-1993-subordination-cesaro} also discussed
the geometric property of $(n+\beta)s_n^{\beta}(f,z)/n$ when $0<\beta<1$.
It is interesting to discuss the geometric property of Ces\`aro mean of
type $(b-1,c)$ of $f(z)$, where $f(z)$ belongs to some class of functions.
Note that certain geometric properties of $s_n^{b-1;c}(z)$ are given in
\cite{sangal-swaminathan-geom-sigma-bc}, mainly using the positivity results that
are consequences of \cite{sangal-swaminathan-geom-sigma-bc}.
In this section, we provide some more geometric properties as consequences
of Theorem \ref{thm:generalized-stable} and Theorem \ref{thm:starlike-generalized-stable}
which are fundamental in the formulation of Definition \ref{def:gen-stable}.

\begin{theorem}
Let $F_{\lambda}(z)=z+\displaystyle\sum_{k=2}^{\infty}(2-2\lambda)_{k-1}\frac{z^k}{k!}$,
$\lambda\in[1/2,1)$. Then for $b\geq\max\{c,2c-1\}>0$,
\begin{align*}
\left|1-(1-z)\cdot\left(
\left(\frac{b+n-1}{c+n-1}\right)(s_n^{(b-1,c)}(F_{\lambda},z))'\right)^{\frac{1}{2-2\lambda}}\right|\leq1.
\end{align*}
In particular, $\left(\frac{b+n-1}{c+n-1}\right)s_n^{(b-1,c)}(F_{\lambda},z)\in\mathcal{K}(\lambda)$.

\begin{proof}
It is given that,
\begin{align*}
F_{\lambda}(z)=z+\sum_{k=2}^{\infty} (2-2\lambda)_{k-1}\frac{z^k}{k!}, \quad \hbox{$\lambda\in[1/2,1)$}.
\end{align*}
By Alexander transform it is obvious that,
\begin{align}\label{eqn:stable-app-gft-alex-trans}
F_{\lambda}(z)\in \mathcal{C}(\lambda)& \Longleftrightarrow z F'_{\lambda}=\frac{z}{(1-z)^{2-2\lambda}}\in \mathcal{S}^{\ast}(\lambda).
\end{align}
Substituting $2-2\lambda=\mu$, we obtain
\begin{align*}
(1-z)^{2-2\lambda}\sigma_{n-1}^{(b-1,c)}\left(z,\frac{1}{(1-z)^{2-2\lambda}}\right)&\prec (1-z)^{2-2\lambda}.
\end{align*}
Since
\begin{align*}
(1-z)^{2-2\lambda}\sigma_{n-1}^{(b-1,c)}\left(z,\frac{1}{(1-z)^{2-2\lambda}}\right)
&=(1-z)^{2-2\lambda}\sigma_{n-1}^{(b-1,c)}(z,F'_{\lambda})\\
&=(1-z)^{2-2\lambda} \left(\frac{b+n-1}{c+n-1}\right)s_n^{(b-1,c)}(z,F_{\lambda})',
\end{align*}
we get, using Theorem \ref{thm:generalized-stable},
\begin{align*}
\left|1-\left((1-z)^{2-2\lambda}\cdot\left(\left(\frac{b+n-1}{c+n-1}\right)
(s_n^{(b-1,c)}(z,F_{\lambda}))'\right)\right)^{\frac{1}{2-2\lambda}}\right| &\leq1,
\end{align*}
which is equivalent to,
\begin{align*}
 \mathrm{Re} \bigg( (1-z)^{2-2\lambda}\cdot\left(\frac{b+n-1}{c+n-1}\right)\left(s_n^{(b-1,c)}(z,F_{\lambda})\right)'\bigg)>0.
\end{align*}
This expressions together with \eqref{eqn:stable-app-gft-alex-trans} and the analytic
characterization of $\mathcal{K}(\lambda)$ guarantees that
$\left(\frac{b+n-1}{c+n-1}\right)s_n^{(b-1,c)}(z,F_{\lambda})\in\mathcal{K}(\lambda)$
with respect to the starlike function given in \eqref{eqn:stable-app-gft-alex-trans}.
\end{proof}
\end{theorem}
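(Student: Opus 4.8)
The plan is to recognize that $F_\lambda$ is an antiderivative of $f_{2-2\lambda}=(1-z)^{-(2-2\lambda)}$ and then to invoke Theorem~\ref{thm:generalized-stable} with $\mu=2-2\lambda$. Expanding $f_{2-2\lambda}(z)=\sum_{k\ge 0}\frac{(2-2\lambda)_k}{k!}z^k$ and integrating term by term gives $F_\lambda'(z)=f_{2-2\lambda}(z)$, equivalently $zF_\lambda'(z)=\frac{z}{(1-z)^{2-2\lambda}}$, which is the extremal function of $\mathcal{S}^{\ast}(\lambda)$; by the Alexander relation this means $F_\lambda\in\mathcal{C}(\lambda)$ and $zF_\lambda'\in\mathcal{S}^{\ast}(\lambda)$. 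Next I would use the identity $\left(\frac{b+n-1}{c+n-1}\right)s_n^{(b-1,c)}(f,z)'=\sigma_{n-1}^{(b-1,c)}(f',z)$ recorded at the start of this section, applied to $f=F_\lambda$; since $F_\lambda'=f_{2-2\lambda}$ this says exactly that $\left(\frac{b+n-1}{c+n-1}\right)s_n^{(b-1,c)}(F_\lambda,z)'=\sigma_{n-1}^{(b-1,c)}(f_{2-2\lambda},z)$.

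Since $\lambda\in[1/2,1)$ forces $\mu=2-2\lambda\in(0,1]\subset[-1,1]$ and $b\ge\max\{c,2c-1\}>0$, Theorem~\ref{thm:generalized-stable} applied with $n$ replaced by $n-1$ (it holds for every nonnegative index) gives
\begin{align*}
(1-z)^{2-2\lambda}\left(\frac{b+n-1}{c+n-1}\right)s_n^{(b-1,c)}(F_\lambda,z)'
=(1-z)^{2-2\lambda}\sigma_{n-1}^{(b-1,c)}(f_{2-2\lambda},z)\prec(1-z)^{2-2\lambda}.
\end{align*}
Now I would read this subordination in the two ways used in the proof of Theorem~\ref{thm:generalized-stable}. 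Writing $G$ for the factor $\left(\frac{b+n-1}{c+n-1}\right)s_n^{(b-1,c)}(F_\lambda,z)'$, the relation $(1-z)^{\mu}G\prec(1-z)^{\mu}$ means precisely that $1-(1-z)G^{1/\mu}$ coincides with the associated Schwarz function and hence has modulus at most $1$ on $\mathbb{D}$; with $\mu=2-2\lambda$ this is the first displayed inequality. On the other hand, for $0<\mu\le1$ the function $(1-z)^{\mu}$ maps $\mathbb{D}$ into the sector $|\arg w|<\mu\pi/2$, which lies in the right half-plane, so the subordination also yields $\mathrm{Re}\left[(1-z)^{2-2\lambda}\left(\frac{b+n-1}{c+n-1}\right)s_n^{(b-1,c)}(F_\lambda,z)'\right]>0$ on $\mathbb{D}$.

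For the ``in particular'' assertion I would rewrite, using $zF_\lambda'(z)=z/(1-z)^{2-2\lambda}$,
\begin{align*}
(1-z)^{2-2\lambda}\left(\frac{b+n-1}{c+n-1}\right)s_n^{(b-1,c)}(F_\lambda,z)'
=\frac{z\left[\left(\frac{b+n-1}{c+n-1}\right)s_n^{(b-1,c)}(F_\lambda,z)\right]'}{zF_\lambda'(z)}.
\end{align*}
Since $zF_\lambda'\in\mathcal{S}^{\ast}(\lambda)$ and the real part of the left-hand side is positive on $\mathbb{D}$, the analytic characterization of close-to-convex functions of order $\lambda$ shows $\left(\frac{b+n-1}{c+n-1}\right)s_n^{(b-1,c)}(F_\lambda,z)\in\mathcal{K}(\lambda)$, with $zF_\lambda'$ as the associated starlike function.

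The two steps carrying the content are the identification in the first paragraph — that differentiating the normalized Ces\`aro mean of $F_\lambda$ produces exactly the (unnormalized) Ces\`aro mean of type $(b-1;c)$ of $f_{2-2\lambda}$, which is what makes Theorem~\ref{thm:generalized-stable} directly usable — and the final passage from the positive-real-part statement to membership in $\mathcal{K}(\lambda)$, where it is essential that the comparison function $zF_\lambda'$ be starlike \emph{of order $\lambda$}, not merely starlike. The remaining work (Pochhammer bookkeeping, the elementary image of $(1-z)^{\mu}$, and the Schwarz-function manipulation) is routine and mirrors the proof of Theorem~\ref{thm:generalized-stable}.
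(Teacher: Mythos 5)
Your proposal is correct and follows essentially the same route as the paper: identify $F_\lambda'=f_{2-2\lambda}$ via the Alexander relation, convert the normalized derivative of $s_n^{(b-1,c)}(F_\lambda,z)$ into $\sigma_{n-1}^{(b-1,c)}(f_{2-2\lambda},z)$, apply the generalized stability theorem with $\mu=2-2\lambda$, and pass from the resulting subordination to the modulus bound and the positive-real-part condition that characterizes $\mathcal{K}(\lambda)$ with respect to $zF_\lambda'$. Your explicit justification that the subordination target $(1-z)^{\mu}$, $0<\mu\le 1$, lies in a right-half-plane sector is a slightly more careful rendering of the step the paper states as an unexplained equivalence, but the argument is the same.
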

In particular if $\lambda=1/2$, $F_{1/2}(z)=-\log(1-z)$, then
\begin{align*}
\left(\frac{b+n-1}{c+n-1}\right)s_n^{b-1,c}(-\log(1-z),z)\in \mathcal{K}(1/2).
\end{align*}

\begin{theorem}\label{thm:close-to-convexity-lambda}
If $f\in\mathcal{C}(\lambda)$, $\lambda\in[1/2,1)$ and $b\geq \max\{c,2c-1\}$, then for $n\geq1$,
\begin{align*}
\dfrac{\left(\frac{b+n-1}{c+n-1}\right)(s_n^{(b-1,c)}(f,z))'}{f'(z)}\prec (1-z)^{2-2\lambda}.
\end{align*}
In particular, $\left(\frac{b+n-1}{c+n-1}\right)s_n^{(b-1,c)}(z,f)\in\mathcal{K}(\lambda)$.
\begin{proof}
If $f\in\mathcal{C}(\lambda)$, then by Alexander transform, $g=zf'(z)\in\mathcal{S}^{\ast}(\lambda)$, then
\begin{align*}
\dfrac{\left(\frac{b+n-1}{c+n-1}\right)(s_n^{(b-1,c)}(z,f))'}{f'(z)}
=\frac{z \sigma_{n-1}^{(b-1,c)}(z,f')}{g}
=\frac{z \sigma_{n-1}^{(b-1,c)}(z,g/z)}{g}.
\end{align*}
If $g\in\mathcal{S}^{\ast}(\lambda)$, $\lambda\in[1/2,1)$, then from Theorem \ref{thm:starlike-generalized-stable},
\begin{align*}
\dfrac{\left(\frac{b+n-1}{c+n-1}\right)(s_n^{(b-1,c)}(z,f))'}{f'(z)}\prec (1-z)^{2-2\lambda}
\Rightarrow\mathrm{Re} \left(\dfrac{z \left(\frac{b+n-1}{c+n-1}\right)s_n^{(b-1,c)}(z,f)'}{g(z)}\right)>0
\end{align*}
This means
$\left(\frac{b+n-1}{c+n-1}\right)s_n^{(b-1,c)}(z,f) \in \mathcal{K}(\lambda)$. \qedhere
\end{proof}
\end{theorem}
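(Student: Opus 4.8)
The plan is to reduce the claim to Theorem~\ref{thm:starlike-generalized-stable} through the Alexander transform. Since $f\in\mathcal{C}(\lambda)$, the function $g(z):=zf'(z)$ lies in $\mathcal{S}^{\ast}(\lambda)$, and as $\lambda\in[1/2,1)$ (so that $2-2\lambda\in(0,1]$) the hypotheses of Theorem~\ref{thm:starlike-generalized-stable} are met by $g$.

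First I would rewrite the quotient on the left entirely in terms of $g$. The derivative identity recorded just before the theorem, namely $\left(\frac{b+n-1}{c+n-1}\right)s_n^{(b-1,c)}(f,z)'=\sigma_{n-1}^{(b-1,c)}(f',z)=\sigma_{n-1}^{(b-1,c)}(z)\ast f'(z)$, together with $f'(z)=g(z)/z$, gives
\[
\frac{\left(\frac{b+n-1}{c+n-1}\right)(s_n^{(b-1,c)}(f,z))'}{f'(z)}
=\frac{z\,\sigma_{n-1}^{(b-1,c)}(z,f')}{g(z)}
=\frac{z\,\sigma_{n-1}^{(b-1,c)}(z,g/z)}{g(z)}.
\]
Applying Theorem~\ref{thm:starlike-generalized-stable} to $g\in\mathcal{S}^{\ast}(\lambda)$ with the index $n-1\ge 0$ (legitimate because $n\ge 1$) immediately yields $\dfrac{z\,\sigma_{n-1}^{(b-1,c)}(g/z,z)}{g}\prec(1-z)^{2-2\lambda}$, which is the subordination asserted.

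For the ``in particular'' clause I would use the analytic characterization of $\mathcal{K}(\lambda)$ with $\gamma=0$. Since $1-z$ ranges over the disc $|w-1|<1$, all of whose points have argument in $(-\pi/2,\pi/2)$, and since $2-2\lambda\in(0,1]$, the map $z\mapsto(1-z)^{2-2\lambda}$ carries $\mathbb{D}$ into the right half-plane; hence the subordination just obtained gives
\[
\mathrm{Re}\!\left(\frac{z\left(\frac{b+n-1}{c+n-1}\right)(s_n^{(b-1,c)}(f,z))'}{g(z)}\right)>0,\qquad z\in\mathbb{D}.
\]
Because $g=zf'\in\mathcal{S}^{\ast}(\lambda)$, this is precisely the statement that $\left(\frac{b+n-1}{c+n-1}\right)s_n^{(b-1,c)}(f,z)\in\mathcal{K}(\lambda)$ with respect to $g$, completing the proof.

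I expect no deep obstacle here: the argument is bookkeeping once Theorem~\ref{thm:starlike-generalized-stable} is available. The two points needing a little care are the index shift $n\mapsto n-1$ paired with the correct derivative identity linking the normalized mean $s_n^{(b-1,c)}$ to $\sigma_{n-1}^{(b-1,c)}$, and the observation that the hypothesis $\lambda\ge 1/2$ is exactly what makes $(1-z)^{2-2\lambda}$ simultaneously an admissible subordinand in Theorem~\ref{thm:starlike-generalized-stable} and a function of positive real part, which is what produces the close-to-convexity conclusion. Keeping these normalizations consistent is the only real concern.
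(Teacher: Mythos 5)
Your argument coincides with the paper's proof: both pass to $g=zf'\in\mathcal{S}^{\ast}(\lambda)$ via the Alexander transform, use the identity $\left(\frac{b+n-1}{c+n-1}\right)(s_n^{(b-1,c)}(f,z))'=\sigma_{n-1}^{(b-1,c)}(f',z)$ to rewrite the quotient as $z\,\sigma_{n-1}^{(b-1,c)}(g/z,z)/g$, invoke Theorem~\ref{thm:starlike-generalized-stable}, and then read off close-to-convexity from the positivity of $\mathrm{Re}\,(1-z)^{2-2\lambda}$ for $\lambda\in[1/2,1)$. Your write-up is correct and, if anything, slightly more explicit than the paper about why the subordination target lies in the right half-plane.
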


If we substitute $b=1+\beta$ and $c=1$ in Theorem \ref{thm:close-to-convexity-lambda} then we obtain the following corollary.
\begin{corollary}
If $f\in\mathcal{C}(\lambda)$, $\lambda\in[1/2,1)$ and $\beta\geq0$ then
for $n\geq1$, $\frac{n+\beta}{n}s_n^{\beta}(f,z)\in\mathcal{K}(\lambda)$.
\end{corollary}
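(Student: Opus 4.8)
The plan is to obtain this corollary as a direct specialization of Theorem \ref{thm:close-to-convexity-lambda} under the parameter substitution $b = 1+\beta$ and $c = 1$. The first thing I would check is that the hypothesis of Theorem \ref{thm:close-to-convexity-lambda}, namely $b \geq \max\{c,2c-1\}$, reduces under this substitution to $1+\beta \geq \max\{1,1\} = 1$, which is exactly $\beta \geq 0$. Thus the assumption $\beta \geq 0$ imposed in the corollary is precisely what is required, and no information is lost.

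Next I would verify the identifications of the objects appearing in the conclusion. Recall from Section 4 that $s_n^{\beta;1}(z) = s_n^{\beta}(z)$, so with $b-1 = \beta$ and $c=1$ we have $s_n^{(b-1,c)}(z) = s_n^{(\beta,1)}(z) = s_n^{\beta}(z)$, and hence $s_n^{(b-1,c)}(f,z) = s_n^{\beta}(f,z)$ for $f\in\mathcal{A}$. Likewise the normalizing constant becomes
\begin{align*}
\frac{b+n-1}{c+n-1} = \frac{(1+\beta)+n-1}{1+n-1} = \frac{n+\beta}{n}.
\end{align*}
Substituting these into the statement of Theorem \ref{thm:close-to-convexity-lambda} gives, for $f\in\mathcal{C}(\lambda)$ with $\lambda\in[1/2,1)$ and $n\geq 1$, the subordination $\dfrac{\frac{n+\beta}{n}(s_n^{\beta}(f,z))'}{f'(z)} \prec (1-z)^{2-2\lambda}$, and in particular $\frac{n+\beta}{n}s_n^{\beta}(f,z)\in\mathcal{K}(\lambda)$, which is the assertion.

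Since this is a pure parameter specialization, there is no substantive obstacle to overcome; the only point requiring a moment's care is confirming that the admissible range $b\geq\max\{c,2c-1\}$ collapses to $\beta\geq 0$ rather than to a strictly stronger restriction, which it does because at $c=1$ both $c$ and $2c-1$ equal $1$. (One may also note the consistency check that for $\lambda=1/2$ this recovers, with $F_{1/2}(z)=-\log(1-z)$, the statement that $\frac{n+\beta}{n}s_n^{\beta}(f,z)\in\mathcal{K}(1/2)$ whenever $f\in\mathcal{C}(1/2)$, matching the particular case recorded after the preceding theorem.)
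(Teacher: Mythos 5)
Your proposal is correct and is exactly the paper's route: the authors obtain this corollary by the same substitution $b=1+\beta$, $c=1$ into Theorem \ref{thm:close-to-convexity-lambda}, under which $\frac{b+n-1}{c+n-1}=\frac{n+\beta}{n}$ and the hypothesis $b\geq\max\{c,2c-1\}$ collapses to $\beta\geq0$. Your additional check that the constraint does not become strictly stronger at $c=1$ is a sensible verification but nothing beyond what the paper implicitly does.
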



If we choose $g(z)=z$, for $f\in\mathcal{C}(\lambda)$ where $\lambda\in[1/2,1)$ then,
\begin{align*}
\mathrm{Re}(s_n^{(b-1,c)}(f,z))'>0 \Longrightarrow s_n^{(b-1,c)}(f,z)'\neq0.
\end{align*}
Since every close-to-convex function is univalent \cite[p.47]{duren-1983-book},
the generalized Ces\`aro mean $s_n^{(b-1,c)}(f,z)$ for the convex
function $f$ is also univalent. In this situation for $b=1,c=1$,
a subordination chain was provided by Ruscheweyh and Salinas
\cite{ruscheweyh-salinas-1993-subordination-cesaro} which is given
in the following result.
\begin{theorem}\cite{ruscheweyh-salinas-1993-subordination-cesaro}
If $f\in \mathcal{C}(1/2)$, then
\begin{align*}
s_1^{(\alpha+k)}(z,f)\prec s_2^{(\alpha+k)}(z,f)\prec \cdots s_n^{(\alpha+k)}(z,f)\prec\cdots f(z), \quad k\in\mathbb{N}.
\end{align*}
holds for $\alpha\geq0$ and $z\in\mathbb{D}$.
\end{theorem}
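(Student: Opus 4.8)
Throughout put $\delta:=\alpha+k$, so $\delta\ge1$. The plan is to prove two families of subordinations and then close up by transitivity of $\prec$: the \emph{terminal} links
\[
s_m^{(\alpha+k)}(z,f)\prec f(z)\qquad(m\in\mathbb{N}),
\]
and the \emph{interior} links $s_m^{(\alpha+k)}(z,f)\prec s_{m+1}^{(\alpha+k)}(z,f)$. In both cases $s_m^{(\alpha+k)}(\cdot,f)$ is the convolution of a fixed normalised Ces\`aro kernel with $f$, so each required subordination has the shape ``(kernel)$\ast f\prec g$'', and the natural engine is Ruscheweyh's prestarlike duality, Lemma~\ref{lemma:ruscheweyh-prestarlike}, used exactly as in the proof of Theorem~\ref{thm:starlike-generalized-stable} with $\lambda=1/2$; the hypothesis $f\in\mathcal{C}(1/2)$ enters through the (classical) inclusion $\mathcal{C}(1/2)\subset\mathcal{S}^{\ast}(1/2)=\mathcal{R}^{\ast}(1/2)$.

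For the terminal links I would mimic the proof of Theorem~\ref{thm:starlike-generalized-stable} for $\lambda=1/2$: passing $f$ through its prestarlike factor of order $1/2$ and invoking Lemma~\ref{lemma:ruscheweyh-prestarlike} puts the quotient $s_m^{(\alpha+k)}(z,f)/f(z)$ into the closed convex hull of the image of $(1-z)\,s_m^{(\alpha+k)}(f_1,z)$, where $f_1(z)=1/(1-z)$. The quantitative input that remains is the positivity
\[
\mathrm{Re}\bigl((1-z)\,s_m^{(\alpha+k)}(f_1,z)\bigr)>0,\qquad z\in\mathbb{D},
\]
a Fej\'er/Vietoris-type estimate which for $\delta\ge1$ is of precisely the character of Theorem~\ref{thm:cosine-positivity-sigma-bc} and of the cosine/sine sums used in the proof of Theorem~\ref{thm:conj1-rho=1/2}; since $(1-z)(\mathbb{D})$ is convex, the closed convex hull above is $(1-z)(\mathbb{D})$ itself, and the inclusion becomes the subordination $s_m^{(\alpha+k)}(z,f)\prec f(z)$ (the value at $z=0$ matching by normalisation). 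For the interior links I would factor the kernels as $\Phi_m=q_m\ast\Phi_{m+1}$, where $q_m$ is the explicit ratio polynomial — normalised by $q_m(0)=1$ and with decreasing positive Taylor coefficients — so that $s_m^{(\alpha+k)}(z,f)=q_m\ast s_{m+1}^{(\alpha+k)}(z,f)$; applying Lemma~\ref{lemma:ruscheweyh-prestarlike} with $\Phi_{m+1}$ as the prestarlike factor, $f$ as the order-$1/2$ starlike function, and $(q_m\ast f)/f$ as the free factor yields
\[
\frac{s_m^{(\alpha+k)}(z,f)}{s_{m+1}^{(\alpha+k)}(z,f)}\in\overline{co}\Bigl(\bigl((q_m\ast f)/f\bigr)(\mathbb{D})\Bigr),
\]
after which one must identify $(q_m\ast f)/f$ as subordinate to a univalent function with convex image — once more a positivity statement about cosine sums, now with the coefficients of $q_m$ — in order to read off $s_m^{(\alpha+k)}(z,f)\prec s_{m+1}^{(\alpha+k)}(z,f)$.

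The hard part is the interior step, and for two reasons. First, one needs that the Ces\`aro kernel $\Phi_{m+1}$ is itself prestarlike of order $1/2$ whenever $\delta\ge1$; this is a genuine geometric property of Ces\`aro means, equivalent to a positive-trigonometric-sum inequality, and should be accessible by the techniques behind Theorem~\ref{thm:cosine-positivity-sigma-bc}. Second — and this is the real difficulty — the superordinate $s_{m+1}^{(\alpha+k)}(z,f)$ is only close-to-convex (by Lewis' theorem, recalled above), not convex, so the ``convexity of the image'' shortcut that converts a convex-hull inclusion into an honest subordination in the terminal case is unavailable. To circumvent this I would abandon the pairwise comparisons and instead construct a single subordination (L\"owner) chain $L(z,t)$, $t\in[1,\infty]$, interpolating the means — $L(z,n)=s_n^{(\alpha+k)}(z,f)$ and $L(z,\infty)=f(z)$ — and verify the L\"owner admissibility condition $\mathrm{Re}\bigl(\partial_t L(z,t)\big/\bigl(z\,\partial_z L(z,t)\bigr)\bigr)>0$ on $\mathbb{D}$; this reduces the whole theorem to the positivity of one explicit family of trigonometric polynomials attached to the Ces\`aro kernels, of the kind already treated here, and produces the terminal subordinations as the limiting case $t\to\infty$.
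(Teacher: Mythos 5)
First, a point of reference: the paper does not prove this theorem at all --- it is quoted from \cite{ruscheweyh-salinas-1993-subordination-cesaro} only to motivate the open problem \eqref{eqn:open-prob-subordination-chain} --- so your sketch has to stand on its own, and as written it does not close either family of links.

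The central gap is the step that converts information about the \emph{ratio} $s_m^{(\alpha+k)}(z,f)/f(z)$ into the subordination $s_m^{(\alpha+k)}(z,f)\prec f(z)$. Lemma \ref{lemma:ruscheweyh-prestarlike}, used as in Theorem \ref{thm:starlike-generalized-stable} with $\lambda=1/2$, yields at best $s_m/f\prec 1-z$, i.e.\ $\mathrm{Re}\bigl(s_m(z,f)/f(z)\bigr)>0$, and this does not place $s_m(\mathbb{D})$ inside $f(\mathbb{D})$. Concretely, take $f(z)=-\log(1-z)$, which lies in $\mathcal{C}(1/2)$ because $1+zf''/f'=1/(1-z)$, and $g(z)=z$: then $g/f=1\big/\int_0^1(1-tz)^{-1}\,dt$ has positive real part on $\mathbb{D}$, yet $g\not\prec f$, since $f(\mathbb{D})\subset\{\mathrm{Re}\,w>-\log 2\}$ omits the point $-0.9\in g(\mathbb{D})$. (The same example shows the statement is sensitive to normalisation: with the paper's normalised means $s_1^{(\alpha+k)}(z,f)=z$, so the first link would literally read $z\prec f$ and fail for this $f$; in \cite{ruscheweyh-salinas-1993-subordination-cesaro} the chain is for the unnormalised means, where $\sigma_1^{\alpha}(f,z)=z/(1+\alpha)$.) The tool that actually yields the terminal links is not Lemma \ref{lemma:ruscheweyh-prestarlike} but the Ruscheweyh--Stankiewicz theorem on preservation of subordination under convolution with prestarlike functions: since $f\in\mathcal{C}(1/2)\subset\mathcal{S}^{\ast}(1/2)=\mathcal{R}^{\ast}(1/2)$ and $z/(1-z)\in\mathcal{S}^{\ast}(1/2)$, it suffices to prove that the Ces\`aro kernel satisfies $\Phi_m\prec z/(1-z)$, i.e.\ $\mathrm{Re}\,\Phi_m(z)>-1/2$ for order $\geq 1$, and then convolve both sides with $f$. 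That is a different lemma and a different positivity inequality from the ones you invoke.

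The second gap is the interior links, which you correctly identify as the real difficulty but do not close. Your convex-hull computation again controls only the ratio $s_m/s_{m+1}$, and the proposed repair --- an interpolating L\"owner chain with $\mathrm{Re}\bigl(\partial_tL/(z\,\partial_zL)\bigr)>0$ --- is left entirely unverified: $L$ is not defined, univalence of $L(\cdot,t)$ for non-integer $t$ (which a L\"owner chain requires) is not addressed, since Lewis's close-to-convexity theorem gives it only at integer parameters, and the admissibility inequality itself is precisely the hard analytic content of \cite{ruscheweyh-salinas-1993-subordination-cesaro}. It is not ``of the kind already treated here'': Theorems \ref{thm:cosine-positivity-sigma-bc} and \ref{thm:sine-positivity-sigma-bc} concern the coefficients $\frac{B_{n-k}}{B_n}\frac{(\mu)_k}{k!}$, not the derivative ratios of a continuously parametrised Ces\`aro family. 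Until both the correct terminal mechanism and the interior positivity are supplied, the chain remains unproved.
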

An extension of this result to $\sigma_n^{(b-1;c)}(f,z)$ can provide more information
on the geometric nature of $\sigma_n^{(b-1;c)}(z)$ and we state this as a problem.

\textbf{Open Problem.}
For $b\geq \max\{c,2c-1\}>0$ and
$f\in\mathcal{C}(\lambda)$ where $\lambda\in[1/2,1)$
we have the following subordination chain.
\begin{align}\label{eqn:open-prob-subordination-chain}
s_1^{(b-1+k,c)}(z,f)\prec s_2^{(b-1+k,c)}(z,f)\prec \cdots s_n^{(b-1+k,c)}(z,f)\prec\cdots f(z), \quad k\in\mathbb{N}.
\end{align}

We do not have the proof of this problem but
the graphical justification of the problem is provided here.
If we take $f(z)=-\log(1-z)=\displaystyle\sum_{k=1}^{\infty}\frac{z^k}{k}\in \mathcal{C}(1/2)$.
Then we have the following two graphs, first one is for n=1,2,3,4 and second is for n=4,5,6,7.
\begin{center}
\includegraphics[width=.4\columnwidth]{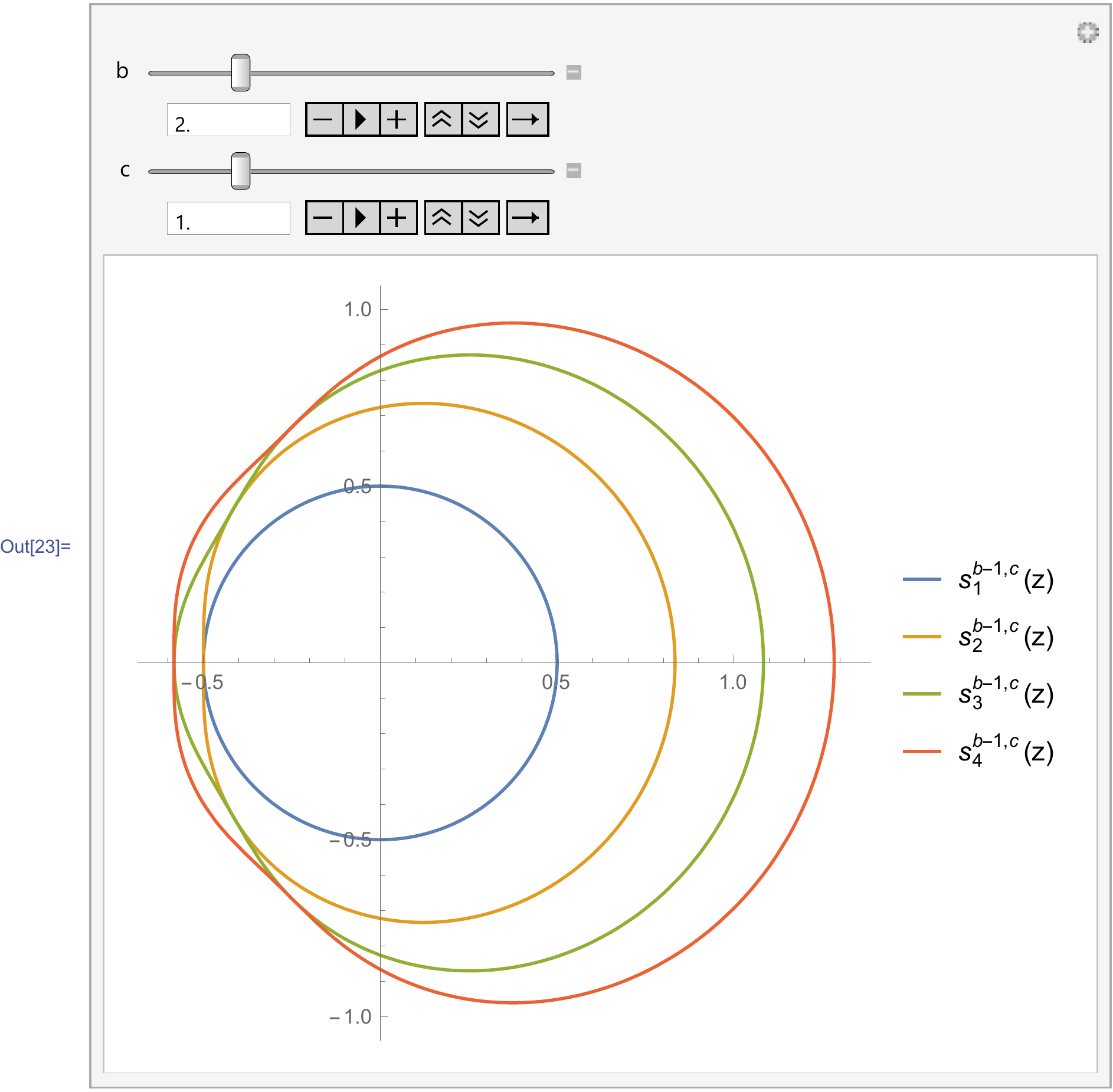}
\includegraphics[width=.4\columnwidth]{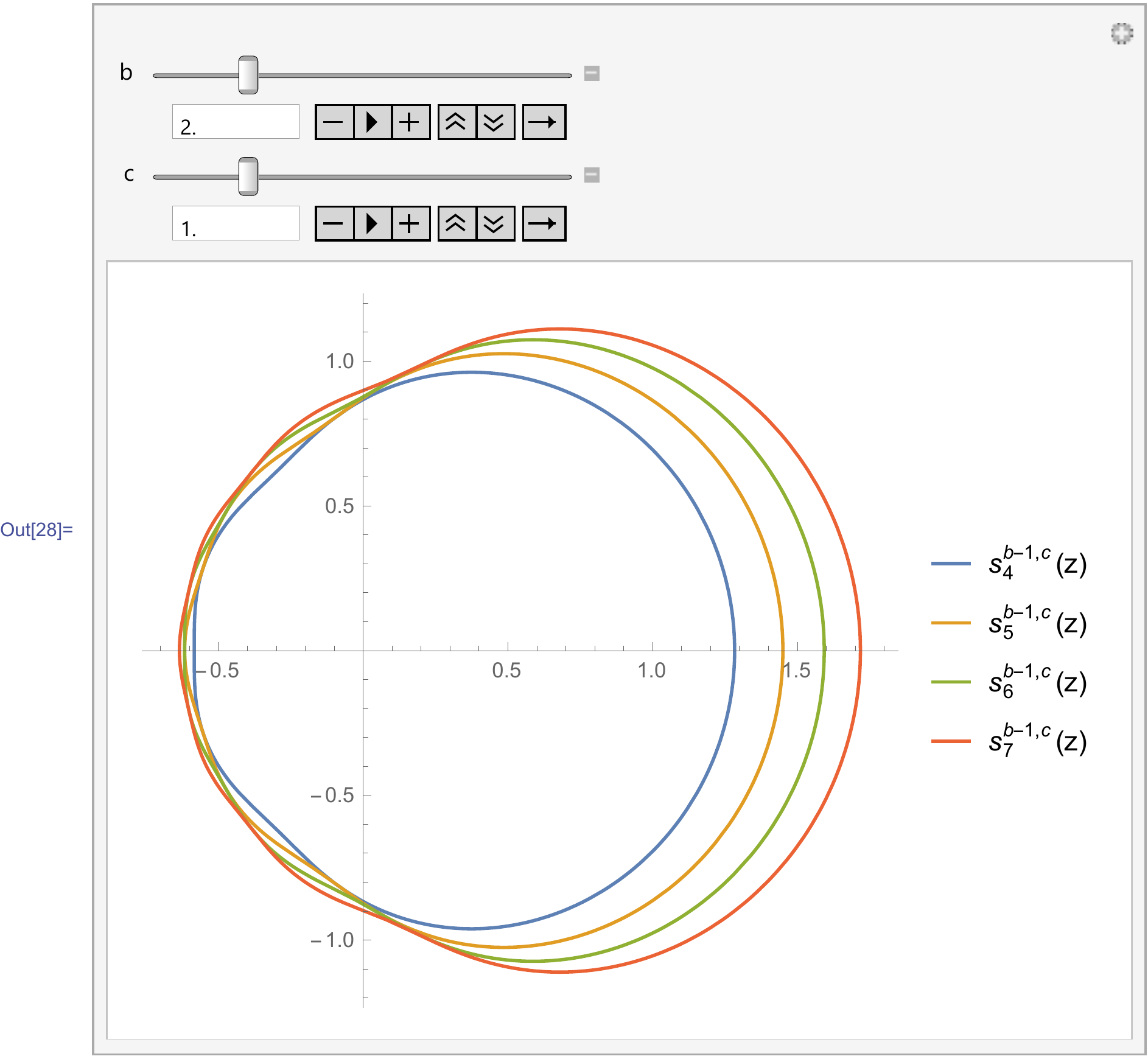}
\end{center}

\section{Concluding Remarks}\label{sec:consequences}
In this section, we define a set $\Omega$ be the set of nonnegative real numbers having the following property.
\begin{align*}
\Omega:=\{\mu_k\in[0,1]: \hbox{ such that } \sum_{k=1}^n\mu_k=1\}.
\end{align*}
In the context of generalization of Kakeya-Enestr\"om theorem given in \cite{ruscheweyh-1978-kakeya-thm-SIAM},
we have the following consequences of Theorem \ref{thm:starlike-generalized-stable}.

\begin{lemma} \cite{ruscheweyh-1978-kakeya-thm-SIAM}\label{lemma:ruscheweyh-kakeya-thm}
Let $n\in\mathbb{N}$ and $f(z)=z\displaystyle\sum_{k=0}^{\infty}b_kz^k\in\mathcal{S}^{\ast}(1/2)$.
Then $\exists$ a number $\rho=\rho(n,f)\geq 1$ such that for every sequence $a_k\in\mathbb{R},k=0,1,2\cdots,n,$ with
\begin{align*}
1=a_0\geq a_1\geq \cdots\geq a_n\geq 0,
\end{align*}
we have
\begin{align*}
P(z)=\sum_{k=0}^n a_kb_kz^k \neq 0, \quad \hbox{$|z|<\rho.$}
\end{align*}
\end{lemma}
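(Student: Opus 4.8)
The plan is to reduce the non-vanishing of $P$ on $\mathbb{D}$ to the stability statement for partial sums already available, and then to read off a uniform radius. Since $f\in\mathcal{S}^{\ast}(1/2)$ is normalized by $f'(0)=1$ and $f(z)=z\sum_{k\geq 0}b_kz^k$, we have $b_0=1$, and as $f$ is univalent, $f(z)\neq 0$ on $\mathbb{D}\setminus\{0\}$. Set $g_k(z):=z\,s_k(z,f/z)=\sum_{j=0}^{k}b_jz^{j+1}$, the partial sum of $f$ of degree $k+1$. The case $b=c=1$, $\lambda=1/2$ of Theorem~\ref{thm:starlike-generalized-stable} gives $g_k(z)/f(z)\prec 1-z$ for every $k\geq 0$, and since $1-z$ maps $\mathbb{D}$ into the half-plane $\{\mathrm{Re}\,w>0\}$, each quotient $g_k/f$ has positive real part on $\mathbb{D}$.

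Next I would apply summation by parts. Writing $c_k:=b_kz^{k+1}$, so that $g_k=\sum_{j=0}^{k}c_j$, and using $1=a_0\geq a_1\geq\cdots\geq a_n\geq 0$, one gets
\begin{align*}
zP(z)=\sum_{k=0}^{n}a_kc_k=\sum_{k=0}^{n-1}(a_k-a_{k+1})\,g_k(z)+a_n\,g_n(z),
\end{align*}
which exhibits $zP(z)$ as a combination of the $g_k$ with nonnegative coefficients whose sum is $a_0=1$. Dividing by $f$ presents $zP(z)/f(z)$ as a convex combination of the functions $g_k/f$, each of positive real part; as the coefficients sum to $1$ (so at least one is positive) and the $g_k/f$ are nonconstant, $\mathrm{Re}\bigl(zP(z)/f(z)\bigr)>0$ on $\mathbb{D}$. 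Hence $zP(z)/f(z)$ is zero-free there, so $P(z)\neq 0$ on $\mathbb{D}\setminus\{0\}$, while $P(0)=a_0b_0=1\neq 0$; thus $P$ is zero-free on all of $\mathbb{D}$, and this holds simultaneously for every admissible sequence $(a_k)$.

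Finally, to produce $\rho=\rho(n,f)$ I would take the infimum, over all admissible vectors $(a_0,\dots,a_n)$ in the compact simplex $\{1=a_0\geq\cdots\geq a_n\geq 0\}$, of the distance from the origin to the zero set of the corresponding polynomial $P$ (with the convention $+\infty$ when $P$ has no zeros); by the preceding step this infimum is at least $1$, which is the assertion. The delicate point, and the one I expect to be the main obstacle, is whether one can guarantee $\rho>1$ rather than merely $\rho\geq 1$: the argument above yields non-vanishing only on the \emph{open} disc, and a generic $f\in\mathcal{S}^{\ast}(1/2)$ need not extend continuously to $\partial\mathbb{D}$, so obtaining a strictly larger common radius would require a quantitative lower bound for $\mathrm{Re}\bigl(zP(z)/f(z)\bigr)$ as $|z|\to 1$, or a normal-families compactness argument applied to the family $\{P\}$ together with control of the zeros of $f$. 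For the statement as recorded, which only asserts $\rho\geq 1$, the first two steps already suffice.
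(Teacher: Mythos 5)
The paper gives no proof of this lemma: it is quoted from Ruscheweyh's 1978 SIAM paper, so there is no internal argument to compare yours against, and your attempt has to be judged on its own terms. As a proof of the statement \emph{as literally recorded} it is correct. The case $b=c=1$, $\lambda=1/2$ of Theorem \ref{thm:starlike-generalized-stable} does give $z\,s_k(z,f/z)/f\prec 1-z$, hence $\mathrm{Re}\bigl(g_k/f\bigr)>0$ on $\mathbb{D}$, and the summation by parts
\begin{align*}
zP(z)=\sum_{k=0}^{n-1}(a_k-a_{k+1})\,g_k(z)+a_n\,g_n(z)
\end{align*}
exhibits $zP/f$ as a convex combination of functions of positive real part, so $P$ is zero-free on $\mathbb{D}$, uniformly over the simplex of admissible $(a_k)$; thus $\rho=1$ works. (Your parenthetical that the $g_k/f$ are nonconstant is unnecessary and can fail, e.g.\ for $f(z)=z$, but positivity of the real part does not need it.) This is essentially the classical route: Ruscheweyh obtains the same inequality $\mathrm{Re}\bigl(zP/f\bigr)>0$ in one step from Lemma \ref{lemma:ruscheweyh-prestarlike} with $F=f\in\mathcal{R}^{\ast}(1/2)$, $G=z/(1-z)$, $H=(1-z)\sum_{k=0}^{n}a_kz^k$, the positivity of $\mathrm{Re}\,H$ coming from the same Abel summation; your detour through the stable-function theorem replaces that convolution step but lands in the same place. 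The one substantive caveat is the one you already flag: you get non-vanishing only on the open disc, i.e.\ $\rho\geq1$, whereas Ruscheweyh's theorem, and the use the paper makes of it in the corollaries of Section \ref{sec:consequences} (which assert non-vanishing for $z\in\overline{\mathbb{D}}$), needs $\rho>1$. Closing that gap requires an additional boundary/compactness argument ruling out zeros on $|z|=1$ uniformly over the coefficient simplex, which neither your proof nor the paper supplies. For the inequality $\rho\geq1$ actually asserted in the displayed lemma, your proof is complete.
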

We get the following consequences of Theorem \ref{thm:starlike-generalized-stable} using Lemma \ref{lemma:ruscheweyh-kakeya-thm}.
\begin{corollary}
Let $zf\in\mathcal{S}^{\ast}(\lambda),\lambda\in[1/2,1)$ and $b\geq\max\{c,2c-1\}$. Then for
any $\{\mu_k\}_{k=1}^n\in \Omega$, we have
\begin{align*}
\sum_{k=1}^n\mu_k \sigma_k^{(b-1,c)}(f,z)\neq 0,\quad \hbox{ $z\in\overline{\mathbb{D}}$}.
\end{align*}
\end{corollary}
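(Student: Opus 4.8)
The plan is to combine Theorem~\ref{thm:starlike-generalized-stable} with the Kakeya--Enestr\"om type result in Lemma~\ref{lemma:ruscheweyh-kakeya-thm}. First I would record that, since $zf\in\mathcal{S}^{\ast}(\lambda)$ with $\lambda\in[1/2,1)$ and $\mathcal{S}^{\ast}(\lambda)\subset\mathcal{S}^{\ast}(1/2)$, the hypothesis of Lemma~\ref{lemma:ruscheweyh-kakeya-thm} is met, so writing $zf(z)=z\sum_{k=0}^{\infty}b_kz^k$ there is a $\rho=\rho(n,f)\geq 1$ such that $\sum_{k=0}^{m}a_kb_kz^k\neq 0$ for $|z|<\rho$ whenever $1=a_0\geq a_1\geq\cdots\geq a_m\geq 0$. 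In our setting the coefficients playing the role of $a_k$ are $B_{m-k}/B_m$ (the coefficients of $\sigma_m^{(b-1,c)}(z)$), so I must first check that for $b\geq\max\{c,2c-1\}>0$ the sequence $k\mapsto B_{m-k}/B_m$ is nonincreasing in $k$ with leading term $1$; this is precisely the monotonicity built into the Ces\`aro-type weights and follows from the recursion between consecutive $B_k$'s (equivalently, it is the condition $h_{n1}\leq 1$ appearing in the matrix-representation theorem). Hence each individual mean $\sigma_k^{(b-1,c)}(f,z)$ is zero-free on $\overline{\mathbb{D}}$ in fact on $|z|<\rho$.

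Next I would handle the convex combination. Zero-freeness of each summand alone does not give zero-freeness of the sum, so the real mechanism must be subordination: by Theorem~\ref{thm:starlike-generalized-stable}, for each $k$ with $1\leq k\leq n$ we have
\begin{align*}
\frac{z\sigma_k^{(b-1,c)}(f/z,z)}{f(z)}\prec (1-z)^{2-2\lambda},
\end{align*}
and since $(1-z)^{2-2\lambda}$ maps $\mathbb{D}$ into the half-plane $\operatorname{Re}w>0$ (because $2-2\lambda\in(0,1]$ forces $|\arg(1-z)^{2-2\lambda}|<\pi/2$), each quotient $z\sigma_k^{(b-1,c)}(f/z,z)/f(z)$ has positive real part on $\mathbb{D}$. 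A convex combination of functions with positive real part again has positive real part, so
\begin{align*}
\operatorname{Re}\left(\frac{1}{f(z)}\sum_{k=1}^n\mu_k\, z\,\sigma_k^{(b-1,c)}(f/z,z)\right)>0,\qquad z\in\mathbb{D},
\end{align*}
for any $\{\mu_k\}\in\Omega$. In particular the left factor $\sum_{k=1}^n\mu_k z\sigma_k^{(b-1,c)}(f/z,z)$ cannot vanish in $\mathbb{D}$; rescaling back to $\sigma_k^{(b-1,c)}(f,z)=z\sigma_k^{(b-1,c)}(f/z,z)$ (after the normalization identifying $f/z$ with an element of $\mathcal{A}_1$) gives $\sum_{k=1}^n\mu_k\sigma_k^{(b-1,c)}(f,z)\neq 0$ in $\mathbb{D}$.

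Finally I would push the conclusion from $\mathbb{D}$ to the closed disc $\overline{\mathbb{D}}$. Here the Kakeya--Enestr\"om input is what buys the extra room: Lemma~\ref{lemma:ruscheweyh-kakeya-thm} gives a radius $\rho>1$, uniform over all admissible coefficient sequences $(a_k)$, on which the polynomial is zero-free; applying this to each $\sigma_k^{(b-1,c)}(f,z)$ and noting that the subordination/positive-real-part argument above is equally valid on the slightly larger disc $|z|<\rho$ (the subordinating function $(1-z)^{2-2\lambda}$ is analytic and has positive real part there too, or one simply invokes continuity and the fact that the quotient extends analytically past $|z|=1$), the convex combination is zero-free on $|z|<\rho$, hence on $\overline{\mathbb{D}}$. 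I expect the main obstacle to be the careful bookkeeping in this last step: verifying that Theorem~\ref{thm:starlike-generalized-stable}, stated for $z\in\mathbb{D}$, can be propagated to a disc of radius $\rho>1$ simultaneously with the uniform Kakeya--Enestr\"om radius, so that the two ``extra room'' statements are compatible. A clean way around this is to argue purely on $\overline{\mathbb{D}}$ using continuity of the polynomials up to the boundary together with the open-disc positivity, which already forbids boundary zeros unless the quotient degenerates, and that degeneracy is exactly excluded by Lemma~\ref{lemma:ruscheweyh-kakeya-thm}.
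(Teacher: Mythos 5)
Your argument proves the conclusion only on the open disc $\mathbb{D}$, and neither of the two mechanisms you offer for pushing it to $\overline{\mathbb{D}}$ works; since the closed disc is exactly what the corollary asserts, this is a genuine gap. The subordination step itself is sound: applying Theorem \ref{thm:starlike-generalized-stable} to $g=zf\in\mathcal{S}^{\ast}(\lambda)$ gives $\sigma_k^{(b-1,c)}(f,z)/f(z)\prec(1-z)^{2-2\lambda}$, and since $2-2\lambda\in(0,1]$ the target function has positive real part on $\mathbb{D}$, so the convex combination of the quotients does too and the sum cannot vanish for $|z|<1$. But Theorem \ref{thm:starlike-generalized-stable} is a statement about $\mathbb{D}$ only --- $f$ need not be analytic on any larger disc --- so the subordination cannot be ``propagated to $|z|<\rho$.'' Your fallback via continuity only yields $\mathrm{Re}\geq 0$ on $|z|=1$ (and only where the quotient is defined at all; $f$ may blow up or fail to extend continuously to the boundary), which does not exclude zeros of the numerator there; and applying Lemma \ref{lemma:ruscheweyh-kakeya-thm} to each individual $\sigma_k^{(b-1,c)}(f,z)$, as you note yourself, says nothing about their sum.

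The missing idea --- which is the paper's entire proof --- is to apply Lemma \ref{lemma:ruscheweyh-kakeya-thm} once to the combined polynomial rather than to the summands. Writing $f(z)=\sum_{k\geq 0}a_kz^k$, one has $\sum_{j=1}^n\mu_j\sigma_j^{(b-1,c)}(f,z)=\sum_{k=0}^n\delta_k a_k z^k$ with $\delta_k=\sum_{j\geq\max(k,1)}\mu_j B_{j-k}/B_j$. Since $b\geq c$ makes $m\mapsto B_m$ nondecreasing, each sequence $k\mapsto B_{j-k}/B_j$ is nonincreasing with first term $1$, and a convex combination of such sequences again satisfies $1=\delta_0\geq\delta_1\geq\cdots\geq\delta_n>0$. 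With $zf\in\mathcal{S}^{\ast}(\lambda)\subset\mathcal{S}^{\ast}(1/2)$ supplying the coefficients $b_k=a_k$ required by the lemma, a single application gives nonvanishing of the whole sum on $|z|<\rho(n,f)$ with $\rho\geq 1$, which is how the paper reaches the closed-disc conclusion; no subordination is needed at all. Your half-plane argument is a legitimate alternative route to the open-disc statement, but it cannot by itself deliver the boundary.
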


\begin{proof}
Clearly $\{\mu_k\}_{k=1}^n\in\Omega$, implies $\displaystyle\sum_{k=1}^n\mu_k=1$. We consider
\begin{align*}
\sum_{k=1}^n \mu_k \sigma_k^{(b-1,c)}(f,z)=\sum_{k=0}^n \delta_k a_k z^k, \quad z\in\mathbb{D}.
\end{align*}
By simple calculation we can get that,
\begin{align*}
1=\delta_0\geq \delta_1 \geq \delta_2 \geq \cdots \geq \delta_n >0.
\end{align*}
Therefore $\delta_k$ satisfies the conditions of Lemma \ref{lemma:ruscheweyh-kakeya-thm}, hence we proved that
\begin{align}\label{eqn:mu-sk-nonzero}
\sum_{k=1}^n\mu_k \sigma_k^{(b-1,c)}(f,z)\neq 0,\quad \hbox{ $z\in\overline{\mathbb{D}}$}.\quad \quad  \qedhere
\end{align}
\end{proof}

Among several other consequences possible we would like to provide an application involving
Gegenbauer polynomials. Note that, for $0<\lambda<1/2$ and $-1\leq x\leq 1$,
\begin{align*}
G(z)=\frac{z}{(1-2xz+z^2)^{\lambda}}=z\sum_{k=0}^{\infty}C_k^{\lambda}(x)z^k \in\mathcal{S}^{\ast}(1-\lambda),
\end{align*}
where $C_k^{\lambda}$ are the Gegenbauer polynomial of degree $k$ and order $\lambda$. Therefore
(choosing $\mu_n=1$ and rest $\mu_k$ are all zero) we obtain,
\begin{align}
\sum_{k=0}^n \frac{B_{n-k}}{B_n}C_k^{\lambda}(x)z^k \neq 0, \quad z\in \mathbb{D}.
\end{align}
The inequality (5.1)
contains the result by Koumandos
\cite{koumanods-gegenbauer-chennai-2000} that the partial sum of
$G(z)/z$ i.e. $\sum_{k=0}^n C_k^{\lambda}(x)z^k$ are non-vanishing
in the closed unit disc for $0<\lambda<1/2$. This result enables us to
show that certain polynomials in $z$ having Gegenbauer polynomials
as a coefficients are zero free in the unit disc. This result will also be helpful
in proving positivity of Jacobi polynomial sums
\cite{lewis-1979-closetoconvexity-cesaro-mean-SIAM}. The inequality (5.1)
further can be sharpened in Corollary \ref{cor:5.2}.

\begin{corollary}\label{cor:5.2}
Let $zf\in\mathcal{S}^{\ast}(\lambda),\lambda\in[1/2,1)$ and $b\geq\max\{c,2c-1\}$.
Then for any $\{\mu_k\}_{k=1}^n\in \Omega$, we have
\begin{align*}
\left|\arg\sum_{k=1}^n\mu_k \sigma_k^{(b-1,c)}(f,z)\right| \leq 2\pi(1-\lambda),\quad \hbox{$z\in\overline{\mathbb{D}}$}.
\end{align*}
\end{corollary}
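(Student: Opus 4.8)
\textbf{Proof plan for Corollary \ref{cor:5.2}.}
The plan is to derive the argument bound directly from the non-vanishing already established in \eqref{eqn:mu-sk-nonzero} together with the subordination furnished by Theorem \ref{thm:starlike-generalized-stable}. First I would recall that, as in the proof of the preceding corollary, writing $\sum_{k=1}^n \mu_k \sigma_k^{(b-1,c)}(f,z) = \sum_{k=0}^n \delta_k a_k z^k$ with $1 = \delta_0 \ge \delta_1 \ge \cdots \ge \delta_n > 0$ exhibits this polynomial as $P \ast (zf/z \cdot \text{something})$; more usefully, since each $\sigma_k^{(b-1,c)}(f,z) = \sigma_k^{(b-1,c)}(z) \ast f(z)$ and convex combinations of the triangular-matrix rows again give an admissible matrix (conditions analogous to those in Section 3), the quantity $Q(z) := \sum_{k=1}^n \mu_k \sigma_k^{(b-1,c)}(f,z)$ is itself of the form $H(z) \ast f(z)$ for a polynomial $H$ with decreasing nonnegative coefficients and $H(0)=1$.

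Next I would invoke Theorem \ref{thm:starlike-generalized-stable}, or rather its matrix-polynomial analogue, to conclude that $Q(z)/(f(z)/z) \prec (1-z)^{2-2\lambda}$ on $\mathbb{D}$, equivalently $z\,Q(z)/f(z) \prec (1-z)^{2-2\lambda}$. Since $zf \in \mathcal{S}^{\ast}(\lambda)$ with $\lambda \in [1/2,1)$, the standard growth/argument estimate for starlike functions of order $\lambda$ gives $|\arg(f(z)/z)| \le (2-2\lambda)\,\frac{\pi}{2} = (1-\lambda)\pi$ for $z \in \mathbb{D}$ (this is the usual bound coming from $\operatorname{Re}\big(z(zf)'/(zf)\big) > \lambda$ and integrating). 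On the other hand, because $(1-z)^{2-2\lambda}$ maps $\mathbb{D}$ into the sector $|\arg w| < (2-2\lambda)\frac{\pi}{2} = (1-\lambda)\pi$, the subordination yields $|\arg\big(z\,Q(z)/f(z)\big)| \le (1-\lambda)\pi$. Combining the two bounds via $\arg(zQ(z)) = \arg\big(z Q(z)/f(z)\big) + \arg f(z)$ and $\arg(zQ) = \arg z + \arg Q$ gives $|\arg Q(z)| \le 2(1-\lambda)\pi$ for $z \in \mathbb{D}$, and then I would pass to the closed disc $\overline{\mathbb{D}}$ using the non-vanishing \eqref{eqn:mu-sk-nonzero} (which guarantees $\arg Q$ is well defined and continuous up to the boundary) together with a normal-families/continuity argument, or simply by applying the argument principle on slightly smaller disks and letting the radius tend to $1$.

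The main obstacle I anticipate is justifying the matrix-polynomial version of Theorem \ref{thm:starlike-generalized-stable} for the convex combination $H(z) = \sum_{k=1}^n \mu_k \sigma_k^{(b-1,c)}(z)$: one must check that $H$ indeed satisfies the hypotheses ($h_{n1} \le 1$ and the decreasing-coefficient condition) so that the stable-function subordination applies, and that the prestarlike duality argument of Lemma \ref{lemma:ruscheweyh-prestarlike} goes through verbatim with $\sigma_n^{(b-1,c)}(z)$ replaced by $H(z)$. This is essentially the content of the factorization $f = zf_{2-2\lambda} \ast F$ with $F$ prestarlike of order $\lambda$, applied with $H$ in place of $\sigma_n^{(b-1,c)}$; the decreasing-coefficient check for the convex combination is exactly the $\delta_k$ computation already used above. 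The remaining steps — the starlike argument bound and the passage to $\overline{\mathbb{D}}$ — are routine.
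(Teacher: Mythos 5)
Your endgame coincides with the paper's: factor the target as $\bigl(\sum_{k}\mu_k\sigma_k^{(b-1,c)}(f,z)/f\bigr)\cdot f$, bound the argument of each factor by $(1-\lambda)\pi$ (the first from subordination to $(1-z)^{2-2\lambda}$, the second from the standard Herglotz estimate $|\arg(g(z)/z)|<(2-2\lambda)\pi/2$ for $g=zf\in\mathcal{S}^{\ast}(\lambda)$), and add. The paper's proof is exactly this, written tersely as $\sum_k\mu_k\sigma_k^{(b-1,c)}(f,z)=\bigl((1-\omega(z))/(1-z)\bigr)^{2-2\lambda}$ followed by $(2-2\lambda)\bigl|\arg\frac{1-\omega}{1-z}\bigr|\leq(2-2\lambda)\pi$. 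Where you genuinely diverge is in how the convex combination acquires its subordination: you propose to realize $\sum_k\mu_k\sigma_k^{(b-1,c)}(z)$ as a row of an admissible triangular matrix and rerun Theorem \ref{thm:starlike-generalized-stable} for it. That detour is both unnecessary and the one place your plan could founder, as you yourself suspect: condition (2) defining $\mathcal{H}_n$, namely $h_{ij}=h_{i1}h_{i-1,j-1}$, is a multiplicative constraint linking consecutive rows which a convex combination of the Ces\`aro rows will not satisfy in general, so the matrix analogue does not apply off the shelf. The fix is what the paper uses implicitly: $(1-z)^{2-2\lambda}$ is convex univalent for $\lambda\in[1/2,1)$ (indeed $\mathrm{Re}(1+zg''/g')\geq(3-2\lambda)/2>0$ for $g=(1-z)^{2-2\lambda}$), each $\sigma_k^{(b-1,c)}(f,z)/f\prec(1-z)^{2-2\lambda}$ by Theorem \ref{thm:starlike-generalized-stable} with common value $1$ at the origin, and subordination to a convex univalent function is preserved under convex combinations; no new convolution theorem is needed. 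Two small slips to repair: since $zf\in\mathcal{S}^{\ast}(\lambda)$ forces $f(0)=1$, the correct subordination is $Q/f\prec(1-z)^{2-2\lambda}$ (your $zQ/f$ vanishes at the origin and cannot be subordinate to a function equal to $1$ there), and the companion bound is $|\arg f|\leq(1-\lambda)\pi$ rather than $|\arg(f/z)|$; the telescoping $\arg Q=\arg(Q/f)+\arg f$ then yields $2\pi(1-\lambda)$ as claimed. Your passage to $\overline{\mathbb{D}}$ via the non-vanishing \eqref{eqn:mu-sk-nonzero} and continuity is fine, and is a point the paper leaves unaddressed.
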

\begin{proof}
From Theorem \ref{thm:starlike-generalized-stable} we have
for $zf\in\mathcal{S}^{\ast}(\lambda),\lambda\in[1/2,1)$,
\begin{align*}
\sigma_n^{(b-1,c)}(f,z) = \left(\frac{1-\omega(z)}{1-z}\right)^{2-2\lambda},
\quad \hbox{where $|\omega(z)|\leq |z|$}.
\end{align*}
Choose $\mu_k,k=1,2,\ldots,n\in\Omega$ and taking the convex combination, we get
\begin{align*}
\sum_{k=1}^n \mu_k \sigma_k^{(b-1,c)}(f,z)=\left(\frac{1-\omega(z)}{1-z}\right)^{2-2\lambda}.
\end{align*}
This implies
\begin{align*}
\left|\arg \sum_{k=1}^n \mu_k \sigma_k^{(b-1,c)}(f,z)\right|&=(2-2\lambda)\left|\arg \left(\frac{1-\omega(z)}{1-z}\right)\right|\\
\Longrightarrow \left|\arg\sum_{k=1}^n\mu_k \sigma_k^{(b-1,c)}(f,z)\right| &\leq 2\pi(1-\lambda).
\quad \qedhere
\end{align*}
\end{proof}
\noindent Note that if $\lambda\in[3/4,1)$ and $zf\in\mathcal{S}^{\ast}(\lambda)$ then,
\begin{align*}
\left|\arg\sum_{k=1}^n\mu_k \sigma_k^{(b-1,c)}(f,z)\right|\leq \pi/2
\Longrightarrow
\mathrm{Re} \sum_{k=1}^n\mu_k \sigma_k^{(b-1,c)}(f,z)>0.
\end{align*}
Choose $\mu_n=1$ and rest of $\mu_k$ are zero.
\begin{align*}
\mathrm{Re}(\sigma_n^{(b-1,c)}(f,z))>0, \quad \hbox{$z\in\mathbb{D}$ and $n\in\mathbb{N}$.}
\end{align*}
Further in context of Gegenbauer polynomials this would imply for
$\lambda\in(0,1/4]$, $n\in\mathbb{N}$,
\begin{align}\label{eqn:gegenabuer-coro52}
\sum_{k=0}^n \frac{B_{n-k}}{B_n}C_k^{\lambda}(x) \cos k\theta>0,
\quad \theta\in(0,\pi), n\in\mathbb{N}.
\end{align}
This estimate of the upper bound on $\lambda$ in \eqref{eqn:gegenabuer-coro52}
is not sharp. The theory of starlike functions
ensure that the upper bound will be evaluated at $x=1$ for the large values of $n$.
However, for the case $b=c=1$, this problem was solved by Koumandos and Ruscheweyh
\cite{koumandos-ruscheweyh-2006-gegenbauer-Const.Approx}. For that case,
the upper bound for $\lambda$ is $\lambda=0.345778\ldots$. In general to
find the upper bound for $\lambda$, for values of $b$ and $c$, will lead
to new problem which will have further implications.

\textbf{Acknowledgement:}{ The first author is thankful to the
"Council of Scientific and Industrial Research, India"
(grant code: 09/143(0827)/2013-EMR-1) for financial support to carry out
the above research work.
}

\end{document}